\newcommand{\by}{\mathbf{y}}
\numberwithin{equation}{section}
\newtheorem{thm}{Theorem}[section]
\newtheorem{lemma}[thm]{Lemma}
\newtheorem{corollary}[thm]{Corollary}
\newtheorem{proposition}[thm]{Proposition}
\newtheorem{remark}[thm]{Remark}
\newcommand{\btheta}{\boldsymbol{\theta}}
\newcommand{\bphi}{\boldsymbol{\phi}}
\newcommand{\bEta}{\boldsymbol{\eta}}
\newcommand{\sumN}{\sum_{i=1}^n}
\newtheorem{condition}{Condition}
\newcommand{\LZadd}[1]{\textcolor{black}{#1}}
\newcommand{\BASadd}[1]{\textcolor{black}{#1}}
\newcommand{\given}{\,|\,}
\date{\today}	
\title{Asymptotic posterior normality of the generalized extreme value distribution}
\author{Likun Zhang \\
	Department of Statistics \\
	University of Missouri \\
	Columbia, MO, 65211 \\
	\And
	Benjamin A. Shaby \\
	Department of Statistics\\
	Colorado State University\\
	Fort Collins, CO, 80523
}
\begin{document}
\maketitle

\begin{abstract}
The univariate generalized extreme value (GEV) distribution is the most commonly used tool for analyzing the properties of rare events.  The ever greater utilization of Bayesian methods  for extreme value analysis warrants detailed theoretical investigation, which has thus far been underdeveloped. Even the most basic asymptotic results are difficult to obtain because the GEV fails to satisfy standard regularity conditions.  Here, we prove that the posterior distribution of the GEV parameter vector, given $n$ independent and identically distributed samples, converges in distribution to a trivariate normal distribution.  The proof necessitates analyzing integrals of the GEV likelihood function over the entire parameter space, which requires considerable care because the support of the GEV density depends on the parameters in complicated ways.
\end{abstract}

\keywords{non-regular parametric families; posterior consistency}

\section{Introduction}\label{sec:intro}
The family of generalized extreme value (GEV) distributions is a standard tool for studying the tail behavior of variables in fields ranging from finance to climate science, and Bayesian inference for the GEV family has become increasingly popular \citep[][e.g.]{coles1996bayesian, martins2000generalized,northrop2016posterior}.  Although the GEV was introduced almost a century ago \citep{fisher1928limiting}, classical asymptotic properties that hold for common parametric families such as the exponential families still remain to be confirmed under the GEV, which fails to meet standard regularity conditions. In this paper, we elucidate an important large-sample property of Bayesian inference based on independent GEV samples, and show that the posterior distribution, paired with a sufficiently regular prior, can be well-approximated by a normal distribution as the sample size $n$ increases.

Consider a sequence of independent samples $\by_n=(Y_1,\ldots,Y_n)$ from a generic parametric distribution $P_{\bphi}$, for $\bphi\in \Phi\subset \mathbb{R}^k$, with density $p(y\given \bphi)$ in regard to a dominating measure $\mathcal{P}$ on some measurable space $(\mathcal{X},\mathcal{A})$. Under common regularity conditions \citep[e.g.]{fisher1922mathematical,cramer1946mathematical,wald1949note}, the asymptotic normality of the maximum likelihood estimator (MLE) $\hat{\bphi}_n$, which maximizes the joint log-likelihood $L_n(\bphi)=\sumN \log p(Y_i\given \bphi)$ or solves its associated score equations $\nabla L_n(\bphi)=\boldsymbol{0}$, is well-established. Analogous results hold for Bayesian estimates. For parametric families that satisfy common regularity conditions, the information contained in the samples $Y_1,\ldots, Y_n$ overwhelms the prior density $\pi(\bphi)$ as $n\rightarrow\infty$, and the posterior density
\begin{equation}\label{eqn:posterior}
	\pi_n(\bphi)=\prod_{i=1}^n p(Y_i\given \bphi)\pi(\bphi)\Big/C_n,\quad C_n=\int_\Phi \prod_{i=1}^n p(Y_i\given \bphi)\pi(\bphi) d\bphi,
\end{equation}
will concentrate at $\hat{\bphi}_n$ in the form of a normal density \citep{bernstein1927theory,von1931wahrscheinlichkeit}.

However, as a result of the non-regularity of the GEV, frequentist asymptotic results have only recently been established, and Bayesian asymptotic behavior remains unexplored.  Treating $\boldsymbol{Y}_n$ as a sequence of maxima over finitely-sized blocks that will become GEV distributed as the block size goes to infinity,  \citet{dombry2015existence} proved the existence and strong consistency of the \textit{local} MLE that maximizes the GEV log-likelihood in a pre-specified closed neighborhood of the true parameter $\btheta_0$, for $\xi_0>-1$. \citet{bucher2018maximum} and \citet{dombry2019maximum} established the asymptotic normality of the local MLE when $\xi_0>-1/2$. Under a simpler setting where $\boldsymbol{Y}_n$ consists of independent samples from the GEV distribution, \citet{bucher2017maximum} proved a $O_p(1/\sqrt{n})$ rate of convergence of the local MLE to $\btheta_0$ and showed the central limit relations hold uniformly over the pre-specified compact set for $\xi_0>-1/2$.

However, interesting Bayesian quantities often require integrals over the entire parameter space. Therefore, theoretical analysis requires moving away from the vicinity of $\btheta_0$ and examining the likelihood function globally. \citet{zhang2020uniqueness} explored the global properties of $\prod_{i=1}^n p(Y_i\given \btheta)$ and showed that the local MLE defined in \citet{bucher2017maximum} actually gives the global maximum for the likelihood function when $n$ is sufficiently large. They showed that, informally, the likelihood function becomes highly peaked around the local MLE, and that it decays quickly as $||\btheta-\hat{\btheta}_n||\rightarrow\infty$. We will use this result to approximate the normalizing constant $C_n$ defined in \eqref{eqn:posterior}. 

If $\by_n$ is sampled from a GEV distribution, the elucidation of the aforementioned asymptotic properties is not trivial. The GEV \LZadd{cumulative distribution function} can be parameterized by $\btheta=(\tau,\mu,\xi)$:
\begin{equation*}
	P_{\btheta}(y)=
	\begin{cases}
		\exp\left\{-\left[1+\xi\left(\frac{y-\mu}{\tau}\right)\right]^{-1/\xi}\right\},&\xi\neq 0,\\
		\exp\left\{-\exp\left[-\frac{y-\mu}{\tau}\right]\right\}, &\xi=0,
	\end{cases}
\end{equation*}
for $1+\xi(y-\mu)/\tau>0$ when $\xi\neq 0$, with scale parameter $\tau>0$, location parameter $\mu \in\mathbb{R}$, and shape parameter $\xi\in\mathbb{R}$ \citep{mises1936distribution,jenkinson1955frequency}. Since the support of the GEV distribution depends on $\btheta$, it is especially challenging not only to establish the existence and consistency of its MLE, but also to derive the asymptotic normality of the posterior. For the remainder of the paper, we use $\bphi$ and $\btheta$ to denote the parameter vectors of a generic family of distributions and the GEV family, respectively.

In our main result on posterior asymptotic normality, we assume a class of prior distributions that is general enough to accommodate most cases of interest.  This assumption is not critical, however, as other classes of priors would yield identical results, after slight modification of the proofs.  Practical recommendations for choices of prior distributions is a nuanced topic that is well beyond the scope of the current work.

\section{Main results}
Our main result is establishing the asymptotic normality of the posterior distribution, given i.i.d. samples from the GEV distribution and a sufficiently regular prior distribution. 

Many versions of  
sufficient conditions for asymptotic posterior normality have appeared in the literature, including \citet{lecam1958proprietes}, \citet{freedman1963asymptotic}, \citet{walker1969asymptotic}, \citet{bickel1969some}, \citet{chao1970asymptotic} and \citet{van2000asymptotic}. While these conditions have different restrictions on the sample space $\mathcal{X}$, the parameter space $\Phi$, and $p(y\given \bphi)$, they mostly require the support $\{y:p(y\given \bphi)>0\}$ to be independent of $\bphi$, as is the case for the regularity conditions imposed by \citet{cramer1946mathematical} and \citet{wald1949note} for MLE. The GEV distribution clearly violates this assumption.

Our results remain valid under any fairly regular prior distribution.  To make this concrete, we define one of many possible classes of priors that is compatible with the theory we present below.
\begin{condition}\label{cond_for_prior}
	Let $\pi(\btheta)$ be any continuous proper prior density function with support on $\Theta=\{\btheta=(\tau,\mu,\xi):\tau>0,\mu\in\mathbb{R}, \xi>-1/2\}$ or 
	an improper prior density function that satisfies $ \pi(\btheta)= g(\xi)/\tau$, where $g$ is bounded on any interval $[-1/2,c]$, $c>-1/2$, and is regularly varying at infinity with index $\alpha\in \mathbb{R}$; that is, $g(t\xi)/g(\xi)\rightarrow t^{\alpha}$ as $\xi\rightarrow\infty$.
\end{condition}
We are now ready to state the following main result:
\begin{thm}\label{thm:main_result_posterior_normality}
	Suppose $Y_1, Y_2,\ldots$ are independently sampled from a GEV distribution $P_{\btheta_0}$ with $\btheta_0\in \Theta=\{\btheta:\tau>0, \xi>-1/2\}$, and $\hat{\btheta}_n$ is the local MLE based on $Y_1, \ldots, Y_n$. Let $L''_n(\hat{\btheta}_n)$ be the Hessian of the log-likelihood $L_n$ at $\hat{\btheta}_n$ and $\Sigma_n(\hat{\btheta}_n)=\{-L''_n(\hat{\btheta}_n)\}^{-1}$. Paired with a prior $\pi(\btheta)$ that satisfies Condition \ref{cond_for_prior}, 
	the posterior distribution of $\Sigma_n^{-1/2}(\hat{\btheta}_n)(\btheta-\hat{\btheta}_n)$ converges in distribution to a trivariate standard normal distribution. That is,
	\begin{equation}\label{eqn:posterior_normal}
		\mathcal{P}\left(\left. \Sigma_n^{-1/2}(\hat{\btheta}_n)(\btheta-\hat{\btheta}_n)\leq \boldsymbol{a}\,\right\rvert \boldsymbol{Y}_n\right)\rightarrow \Psi(\boldsymbol{a}), \quad n\rightarrow \infty,
	\end{equation}
	$P_{\btheta_0}$-almost surely for all $\boldsymbol{a}\in\mathbb{R}^3$, where $\Psi(\cdot)$ is the distribution function of the trivariate standard normal distribution.
\end{thm}

Again, Condition \ref{cond_for_prior} is sufficient but not necessary for asymptotic posterior normality, and other large classes of priors will also be sufficient. We impose this condition on the priors because it simplifies the proof and it is fairly common for any model with location and scale parameters. An important special case is the form $\pi(\tau,\mu,\xi)\propto 1/\tau$, which is invariant under reparametrization. Under the GEV model, the corresponding maximal data information (MDI) \citep{zellner1971introduction}, and Jeffreys \citep{jeffreys1961theory} priors can both be factorized into $g(\xi)/\tau$ \citep{northrop2016posterior}, although $g(\xi)$ increases without limit at $-1/2$ and $\infty$ for the Jeffreys prior and consequently the Jeffreys prior is not permissible for any sample size $n$. Hence the tail heaviness of $g(\xi)$ needs to be controlled to ensure posterior propriety and asymptotic normality. In \citet{zhang2024reference}, the performances of the ruled-based reference priors \citep{berger2009formal}, MDI priors and the Beta priors were examined extensively, and recommendations were also provided on the choice of prior for the GEV model according to the use case and the tail-heaviness of the observations.

For the priors that satisfy Condition \ref{cond_for_prior}, it is possible to study the properties of $C_n$ defined in \eqref{eqn:posterior}, and then verify non-standard sets of sufficient conditions for asymptotic posterior normality which admit non-regular families of densities and cope with challenges posed by the support. Examples of such results include \citet{dawid1970limiting}, who restricts $\Phi\subseteq \mathbb{R}$; \citet{heyde1979asymptotic}, who focus on stochastic processes with $\Phi\subseteq \mathbb{R}$;  \citet{chen1985asymptotic}, who extends $\Phi$ to be in $\mathbb{R}^k$; and \citet{ghosal1995convergence}, whose posterior convergence is more general and includes non-normal limits. In this paper, we will work with the conditions proposed by \citet{chen1985asymptotic} due to their generality and simplicity.

In Section~\ref{sec:chen_pos}, we describe the conditions in \citet{chen1985asymptotic} and explain heuristically their respective roles in the deduction of asymptotic posterior normality, and compare them with other sets of conditions in the literature. In Section~\ref{sec:proof}, we utilize the asymptotic properties of the likelihood function from \cite{zhang2020uniqueness} to prove the first two conditions of \citet{chen1985asymptotic}. We then verify the third and last condition, and carefully prove that the contribution to the integral of $C_n$ outside any pre-specified compact neighborhood \BASadd{is negligible} asymptotically. In Section~\ref{sec:discussion}, we summarize the ancillary results that we derived in the process of checking \citet{chen1985asymptotic}'s conditions, and discuss how the asymptotic posterior normality might be used to derive ruled-based noninformative priors for the family of GEV distributions.

\section{Sufficient conditions for asymptotic posterior normality}\label{sec:chen_pos}
\citet{chen1985asymptotic} established conditions that lead to asymptotic posterior normality for any generic family of distributions parameterized by $\bphi\in\Phi\in\mathbb{R}^k$. Denote $B_r(\bphi)=\{\bphi'\in \Phi: ||\bphi'-\bphi||<r\}$ as a neighborhood of $\bphi$, and $S(y)=\{\bphi\in\Theta:p(y\given \bphi)>0\}$ as the set of plausible $\bphi$ values under which $y$ can be observed. Given a sequence of samples $\boldsymbol{Y}_n=(Y_1,\ldots,Y_n)$, define
\begin{equation*}
	\Omega_n=\bigcap_{i=1}^n S(Y_i).
\end{equation*}
Then the domain of the integral in \eqref{eqn:posterior} can be reduced to $\Omega_n$. Recall $L_n(\bphi)=\sumN \log p(Y_i\given \bphi)$, and we see $\Omega_n$ is also the domain for the log-likelihood.

Most sufficient conditions in the literature for asymptotic posterior normality either require independence of the support from the parameters or have technical formulations that are difficult to verify. \LZadd{For example, Theorem 10.1 in \citet{van2000asymptotic} asks for the existence of a sequence of uniformly consistent test functions, which is difficult to verify due to the non-regularity of the GEV}. The conditions in \citet{chen1985asymptotic} are appealing because there are only three basic conditions that provide ample operational flexibility.

\begin{lemma}[Modified from \citet{chen1985asymptotic}]\label{lem:Chen_conditions}
	Suppose with probability $1$ for each $n>N$ ($N>0$), there exists a strict local maximum point $\hat{\bphi}_n$ for $L_n(\bphi)$ such that the gradient $L'_n(\hat{\bphi}_n)$ is a zero vector and the Hessian matrix $L''_n(\hat{\bphi}_n)$ is negative definite. Also suppose the prior density $\pi(\bphi)$ is positive and continuous at $\bphi=\bphi_0$, and $\hat{\bphi}_n$ tends to $\bphi_0$ almost surely as $n\rightarrow\infty$. The three basic conditions presented below will ensure that the posterior converges in distribution to a trivariate normal distribution as $n$ grows to infinity:
	\begin{enumerate}[label=(C\arabic*)]
		\item \label{C1} The largest eigenvalue of $\Sigma_n(\hat{\bphi}_n)=\{-L''_n(\hat{\bphi}_n)\}^{-1}$
		converges almost surely to $0$ as $n\rightarrow \infty$.
		\item \label{C2} For any $\epsilon>0$, there almost surely exists $N_\epsilon>0$ and $r>0$ such that, for all $n>\max\{N,N_\epsilon\}$ and $\bphi\in B_r(\hat{\bphi}_n)$, $L''_n(\bphi)$ exists and satisfies 
		\begin{equation*}
			I-A(\epsilon)\leq L''_n(\bphi)\{L''_n(\hat{\bphi}_n)\}^{-1}\leq I+A(\epsilon),
		\end{equation*}
		in which $I$ is the $k\times k$ identity matrix, $A(\epsilon)$ is a $k\times k$ positive semi-definite matrix whose largest eigenvalue tends to $0$ as $\epsilon\rightarrow 0$ and $k$ is the dimension of the space $\Phi$. Also, the Loewner order, i.e., $A \leq B$ if and only if $B-A$ is positive semidefinite, is used here.
		\item \label{C3} The posterior distribution $\pi_n(\bphi)$ asymptotically concentrates around $\hat{\bphi}_n$. That is, for any $r>0$,
		\begin{equation*}
			\int_{\Omega_n\setminus B_r(\hat{\bphi}_n)}\pi_n(\bphi)d\bphi\rightarrow 0\quad \text{a.s.}, \quad n\rightarrow\infty.
		\end{equation*}
	\end{enumerate}
	More specifically, under \ref{C1} and \ref{C2}, \ref{C3} is the necessary and sufficient condition that \eqref{eqn:posterior_normal} holds. 
\end{lemma}

The original statement of \citet{chen1985asymptotic} calculates the gradient and Hessian of the posterior $\pi_n(\bphi)$ and approximates the posterior distribution by $N(\boldsymbol{m}_n, \Sigma_n(\boldsymbol{m}_n))$, where $\boldsymbol{m}_n$ is the posterior local mode that solves $L'_n(\bphi)+\{\log \pi(\bphi)\}'=0$. However, his proof can easily be modified to apply to the MLE $\hat{\bphi}_n$, given that $\pi(\bphi)$ is continuous at $\bphi_0$ and $\hat{\bphi}_n$ is strongly consistent; see Appendix \ref{appendix:revised_proof} for the modified proof.

We caution that the seemingly short list of conditions in Lemma \ref{lem:Chen_conditions} implicitly relies on several further assumptions, which are trivially satisfied for many continuous parametric families and appear explicitly in other works. For example, assumption (C1) in \citet{dawid1970limiting} requires $\Omega_n$ to contain $\bphi_0$ in its interior. \citet{chen1985asymptotic} essentially assumes this condition when he applies a Taylor expansion to approximate $L_n(\bphi)$ near the mode $\hat{\bphi}_n$. Also, most sets of conditions assume the identifiability of the parameter; that is, different values of $\bphi$ imply different distributions, which induces positive Kullback-Leibler divergence and the consistency of the MLE. \citet{chen1985asymptotic} circumvents this condition by assuming the existence of local MLE and the negative definiteness of the Hessian at the local MLE. Fortunately, the GEV family possesses both the required identifiability and a consistent local MLE.

Now we provide an overview of the role of each condition and explain heuristically how the asymptotic posterior normality is attained. Firstly, \ref{C1} implies that the posterior density $\pi_n(\bphi)$ is highly peaked around $\hat{\bphi}_n$. This condition is also required by (A3) in \citet{heyde1979asymptotic}. The term $-L''_n(\hat{\bphi}_n)$ is often referred to as the observed Fisher information matrix. If $-n^{-1}L''_n(\hat{\bphi}_n)$ tends to $I(\bphi_0)$ almost surely, which is the case for many parametric families, then \ref{C1} holds automatically.

Secondly, the main function of \ref{C2} is to make sure $L''_n(\bphi)$ behaves sufficiently smoothly for values of $\bphi$ near $\hat{\bphi}_n$. This condition can also be found in (B4) of \citet{walker1969asymptotic}, (C11) of \citet{dawid1970limiting} and (A5) of \citet{heyde1979asymptotic}, while (IH1) in \citet{ghosal1995convergence} is weaker and only assumes Lipschitz continuity of $L_n(\bphi)$ in a pre-specified compact set. Expanding a Taylor series only to the first order under \ref{C2}, we can use the Lagrange's form of the remainder to obtain $L_n(\bphi)=L_n(\hat{\bphi}_n)+(\bphi-\hat{\bphi}_n)^TL''_n(\bphi^+_n)(\bphi-\hat{\bphi}_n)/2$ for some $\bphi_n^+$ lying between $\bphi$ and $\hat{\bphi}_n$. When $\bphi$ is near $\hat{\bphi}_n$, $L''_n(\bphi^+_n)\approx L''_n(\hat{\bphi}_n)$, and thus the posterior can be approximated as
\begin{equation}\label{eqn:posterior_taylor}
	\begin{split}
		\pi_n(\bphi)&= \pi_n(\hat{\bphi}_n)\exp\{\log \pi(\bphi)-\log \pi(\hat{\bphi}_n)\}\exp\{L_n(\bphi)-L_n(\hat{\bphi}_n)\}\\
		&\approx c(\by_n)\exp\left\{-\frac{1}{2}(\bphi-\hat{\bphi}_n)^T\{-L''_n(\hat{\bphi}_n)\}(\bphi-\hat{\bphi}_n)\right\},
	\end{split}
\end{equation}
in which the constant $c(\by_n)$ becomes independent of the prior, as 
\ref{C1} ensures that the observed information  $-L''_n(\hat{\bphi}_n)$ dominates the prior information $-\pi''(\hat{\bphi}_n)$, i.e., $|\pi''(\hat{\bphi}_n)|/|L''_n(\hat{\bphi}_n)|\rightarrow 0$ when $n\rightarrow \infty$. The approximation in \eqref{eqn:posterior_taylor} already confirms that $\pi_n(\bphi)$ behaves like a Gaussian kernel inside a small neighborhood of $\hat{\bphi}_n$. 

Thirdly, \ref{C3} concerns the global properties of $L_n$, which are needed to validate each approximation leading to \eqref{eqn:posterior_taylor} when moving away from $\hat{\bphi}_n$. In fact, under \ref{C1} and \ref{C2}, the consistency of the posterior, i.e. \ref{C3}, is sufficient and necessary for that purpose; see Lemma \ref{lem:bounded_concentration} of Appendix \ref{appendix:revised_proof}. In essence, \ref{C3} is equivalent to the assertion that $\pi_n(\bphi)$ converges in distribution to a distribution degenerate at $\bphi_0$, given the strong consistency of the local MLE. It is worth noting that the existence of a sequence of uniformly consistent tests required by \citet{van2000asymptotic} is used in effect to ensure \ref{C3} of \citet{chen1985asymptotic}; see the proof of Theorem 10.1 therein. Moreover, the need for \ref{C3} to assure the consistency of the Bayes estimators has also been stressed by \citet{freedman1963asymptotic} and \citet{diaconis1986consistency}. This property holds in many regular parametric families under mild conditions, but it is again difficult to verify for the family of GEV distributions. In comparison, Equation (5) in \citet{walker1969asymptotic}, (C7) in \citet{dawid1970limiting}, (A4) in \citet{heyde1979asymptotic} and (IH2) in \citet{ghosal1995convergence} all require uniform-boundedness of the tail behaviors in similar ways, and are stricter 
than \ref{C3}. For example, Equation (5) in \citet{walker1969asymptotic} states 
\begin{equation}\label{eqn:C3_1}
	\lim_{n\rightarrow \infty}\mathcal{P}\left[\sup_{\bphi\in \Omega_n\setminus B_r(\hat{\bphi}_n)} n^{-1}\{L_n(\bphi)-L_n(\hat{\bphi}_n)\}<-k(r)\right]=1,
\end{equation}
where $k(r)$ is a positive constant depending on $r$. Equation \eqref{eqn:C3_1} is equivalent to (C3.1) in \citet{chen1985asymptotic}, which is sufficient for \ref{C3} under \ref{C1} and \ref{C2}. Furthermore, \eqref{eqn:C3_1} implies that there is no likelihood value of $\bphi$ not near $\bphi_0$ that is larger than that of the local MLE $\hat{\bphi}_n$. Meanwhile for the GEV likelihood, $\hat{\bphi}_n$ is almost surely the unique maximum point in $B_r(\hat{\bphi}_n)$ for all sufficiently large $n$ \citep[Proposition 2]{dombry2015existence}. Therefore, if \eqref{eqn:C3_1} holds for the GEV family, the local maximizer $\hat{\btheta}_n$ is also the unique global maximizer.

Although \citet{zhang2020uniqueness} established the global optimality of $\hat{\btheta}_n$ for the GEV, they did not verify \eqref{eqn:C3_1} because it essentially requires global H\"{o}lder continuity of $n^{-1}L_n(\btheta)$. They only showed the weaker property that $L_n(\btheta)<L_n(\hat{\btheta}_n)$ for $\btheta\neq \hat{\btheta}_n$. However, we will use results contained therein to validate \ref{C1} and \ref{C2}, and then study the integral in \ref{C3} directly.

\section{Proof of asymptotic posterior normality for the GEV distributions}\label{sec:proof}
\subsection{Steepness and smoothness}\label{sec:cond_1_2}
Since the score equations of the GEV log-likelihood only have roots if $\xi_0>-1$ and second-order asymptotic properties needed for \ref{C2} only hold if $\xi_0>-1/2$, we confine the parameter space to be $\Theta=\{\btheta:\tau>0,\xi>-1/2\}$. Following the notation in \citet{zhang2020uniqueness}, we define $\beta=\beta(\btheta)=\mu-\tau/\xi$ and re-parameterize the GEV log-likelihood using the one-to-one mapping from $\btheta = (\tau, \mu,\xi)$ to $\bEta=(\tau,\beta,\xi)$. Under the $\bEta$-parameterization, the log-likelihood function can be written as
\begin{equation}\label{Log-lik}
	L_n (\bEta)=
	-n\log\tau+(\xi+1)\sumN \log W_i^{-1/\xi}(\bEta)-\sumN W_i^{-1/\xi}(\bEta),\quad \xi\neq 0,
\end{equation}
where 
\begin{equation}\label{wi}
	W_i(\bEta)=1+\xi\left(\frac{Y_i-\mu}{\tau}\right)=\frac{\xi}{\tau}(Y_i-\beta).
\end{equation}
The case $\xi=0$ can also be unified in this formulation by continuity, as $W_i^{-1/\xi}(\bEta)\rightarrow\exp\{-(Y_i-\mu)/\tau\}$ as $\xi\rightarrow 0$. The advantage of re-parameterizing from $(\tau,\mu,\xi)$ to $(\tau,\beta,\xi)$ in this way is that it helps delineate the domain of the log-likelihood concisely:
\begin{equation*}
	\Omega_n=\{\bEta:\tau>0,\xi>-1/2\text{ and }\xi(Y_i-\beta)>0,\;i=1,\ldots,n\}.
\end{equation*}
Consequently, $\Omega_n$, which is also the domain of integral in \eqref{eqn:posterior}, becomes two rectangular boxes on each side of the $\tau$-$\beta$ plane at $\xi=0$. Since the determinant of Jacobian for the transformation from $\btheta$ to $\bEta$ is an identity matrix, proving the asymptotic posterior normality under $\btheta$-parameterization and $\bEta$-parameterization are equivalent. Additionally, applying the substitution $\beta=\mu-\tau/\xi$ simplifies the integral in \eqref{eqn:posterior}:
\begin{equation*}
	\int_{\Theta\cap\{\xi>0\}}\pi(\btheta)\exp\{L_n(\btheta)\}d\btheta = \int_0^\infty\int_{-\infty}^{Y_{(1)}}\int_0^\infty \pi(\bEta)\exp\{L_n(\bEta)\} d\tau d\beta d\xi,
\end{equation*}
and 
\begin{equation*}
	\int_{\Theta\cap\{-1/2<\xi<0\}}\pi(\btheta)\exp\{L_n(\btheta)\}d\btheta = \int_{-1/2}^{0}\int_{Y_{(n)}}^{\infty}\int_0^\infty \pi(\bEta)\exp\{L_n(\bEta)\} d\tau d\beta d\xi,
\end{equation*}
in which $Y_{(1)}$ and $Y_{(n)}$ denote the sample minimum and maximum, respectively. Note that the prior $\pi(\cdot)$ defined by Condition \ref{cond_for_prior} does not depend on $\mu$ or $\beta$.

Appendix G in \citet{zhang2020uniqueness} lists the exact form of the Hessian matrix of $L_n(\bEta)$, in which every element can be written as linear combinations of sums of the form
\begin{equation*}
	\sumN W_i^{-k-a/\xi}(\bEta)\log^b W_i(\bEta),
\end{equation*}
where $k,b=0,1,2$, $a=0,1$. For \ref{C1}, we need to evaluate the Hessian $L''_n$ at the local MLE, whose limiting form is given in the following result.
\begin{lemma}[Proposition 4 in \citet{zhang2020uniqueness}]\label{lem:pseudo_SLLN}
	Suppose $Y_1, Y_2,\ldots$ are independently sampled from $P_{\btheta_0}$ and let $\hat{\btheta}_n=(\hat{\tau}_n,\hat{\mu}_n,\hat{\xi}_n)$, or equivalently $\hat{\bEta}_n=(\hat{\tau}_n,\hat{\mu}_n-\hat{\tau}_n/\hat{\xi}_n,\hat{\xi}_n)$, be the local MLE of $L_n$ that is strongly consistent. Then for constants $k$ and $a$ such that $k\xi_0+a+1>0$,
	\begin{equation*}
		\frac{1}{n}\sumN W_i^{-k-a/\hat{\xi}_n}(\hat{\bEta}_n)\log^b W_i(\hat{\bEta}_n)\rightarrow (-\xi_0)^b\Gamma^{(b)}(k\xi_0+a+1)\quad \text{a.s.},
	\end{equation*}
	where $b$ is a non-negative integer and $\Gamma^{(b)}$ is the $b$th derivative of the Gamma function.
\end{lemma}
Strong consistency of the local MLE is established in \citet{dombry2015existence}. However, care is needed because the boundary of $\Omega_n$, which is confined by $Y_{(1)}$ and $Y_{(n)}$, approaches arbitrarily close to $\hat{\bEta}_n$ (or $\bEta_0$) \cite[Proposition 3]{zhang2020uniqueness}. 
With the limits in Lemma~\ref{lem:pseudo_SLLN}, we can verify the following result.
\begin{corollary}
	Under the assumptions of Lemma~\ref{lem:pseudo_SLLN}, \LZadd{$n^{-1}L_n''(\hat{\bEta}_n)\rightarrow -I({\bEta_0})$} almost surely as $n\rightarrow\infty$, and hence \ref{C1} holds for the GEV likelihood.
\end{corollary}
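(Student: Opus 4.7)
\medskip
\noindent\textbf{Proof plan.}
The plan is to combine Lemma~A.1 of \citet{zhang2020uniqueness} with Lemma~\ref{lem:pseudo_SLLN}, entry by entry, to identify the almost-sure limit of $n^{-1}L_n''(\hat{\btheta}_n)$, and then to read off condition \ref{C1} from the positive definiteness of the resulting limit. First, I would invoke Lemma~A.1 in \citet{zhang2020uniqueness} to write each entry of the Hessian $L_n''(\btheta)$ as a finite linear combination, with coefficients that are smooth functions of $\btheta$ involving only $\tau^{-1}$, $\xi^{-1}$ and polynomials, of terms of the form
\begin{equation*}
S_n^{k,a,b}(\btheta)=\sum_{i=1}^n w_i^{-k-a/\xi}(\btheta)\log^b w_i(\btheta),\qquad k,b\in\{0,1,2\},\ a\in\{0,1\}.
\end{equation*}
For every such index triple that actually appears, the quantity $k\xi_0+a+1$ is strictly positive whenever $\xi_0>-1/2$: the most demanding combination is $k=2,\,a=0$, which yields $2\xi_0+1>0$, so the moment conditions of Lemma~\ref{lem:pseudo_SLLN} are satisfied throughout.

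Next I would apply Lemma~\ref{lem:pseudo_SLLN} to obtain, almost surely,
\begin{equation*}
\frac{1}{n}S_n^{k,a,b}(\hat{\btheta}_n)\;\longrightarrow\;(-\xi_0)^b\,\Gamma^{(b)}(k\xi_0+a+1),
\end{equation*}
for each $(k,a,b)$ arising in the Hessian. Since $\hat{\btheta}_n\to\btheta_0$ almost surely by the result of \citet{dombry2015existence} cited in Section~\ref{sec:cond_1_2}, the smooth coefficient functions multiplying each $S_n^{k,a,b}$ converge to their values at $\btheta_0$. Assembling the pieces entrywise shows that $n^{-1}L_n''(\hat{\btheta}_n)$ tends almost surely to a matrix whose entries are precisely the standard expressions for $-I(\btheta_0)$ computed under the $(\tau,\mu,\xi)$ parametrization of the GEV family, so $n^{-1}L_n''(\hat{\btheta}_n)\to -I(\btheta_0)$ a.s.

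Finally, $I(\btheta_0)$ is positive definite for $\xi_0>-1/2$; this is a classical fact for the GEV information matrix and also follows from the expressions obtained above via standard gamma-function identities. Therefore the smallest eigenvalue of $-n^{-1}L_n''(\hat{\btheta}_n)$ converges almost surely to the smallest eigenvalue $\lambda_{\min}>0$ of $I(\btheta_0)$, and for all $n$ large enough it is bounded below by $\lambda_{\min}/2$. The largest eigenvalue of $\{-L_n''(\hat{\btheta}_n)\}^{-1}=n^{-1}\{-n^{-1}L_n''(\hat{\btheta}_n)\}^{-1}$ is then at most $2/(n\lambda_{\min})$ almost surely for large $n$, which tends to $0$ and gives \ref{C1}. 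The only nontrivial step is checking that every coefficient in the Hessian decomposition stays bounded at $\hat{\btheta}_n$ (which follows from $\hat{\tau}_n\to\tau_0>0$ and continuity; the case $\xi_0=0$ is handled by the limiting form $w_i^{-1/\xi}(\btheta)\to\exp\{-(Y_i-\mu)/\tau\}$ noted in Section~\ref{sec:cond_1_2}); the rest is bookkeeping.
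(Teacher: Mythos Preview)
Your proposal is correct and mirrors exactly what the paper intends: the paper does not give an explicit proof of this corollary but merely states that ``with the limits in Lemma~\ref{lem:pseudo_SLLN}, we can easily verify the following result,'' having just noted that each entry of $L_n''(\btheta)$ is a finite linear combination of the sums $\sum_i w_i^{-k-a/\xi}\log^b w_i$ with smooth coefficients. Your plan---apply Lemma~\ref{lem:pseudo_SLLN} term by term, use $\hat{\btheta}_n\to\btheta_0$ for the coefficients, identify the limit as $-I(\btheta_0)$, and then read off \ref{C1} from positive definiteness---is precisely the intended verification, and your observation that the binding moment constraint is $k=2,\,a=0$ (giving $2\xi_0+1>0$) correctly explains why the restriction $\xi_0>-1/2$ enters.
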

\begin{remark}
	\LZadd{Given the exact form of the Hessian matrix listed in \citet{zhang2020uniqueness}, it is easy to see that its limit, namely the negative Fisher information matrix $-I({\bEta_0})$, is a function of $\tau_0$ and $\xi_0$ and does not depend on $\beta_0$. Therefore, $I({\bEta_0})$ is well-defined when $\xi_0=0$.}
\end{remark}

Condition~\ref{C2} requires that $L_n''(\bEta)$ be well-approximated by $L_n''(\hat{\bEta}_n)$ when $\bEta$ is near $\hat{\bEta}_n$. The smoothness of the GEV log-likelihood guarantees that \LZadd{the third derivative} $L_n^{'''}(\bEta)$ can be approximated similarly as in Lemma~\ref{lem:pseudo_SLLN} so that \LZadd{the} mean-value theorem applies to $L_n''(\bEta)$. The result is summarized as follows.
\begin{lemma}[Proposition 5 in \citet{zhang2020uniqueness}]\label{lem:hessian}
	Under the assumptions of Lemma~\ref{lem:pseudo_SLLN}, we can choose a small $r>0$ based on $\bEta_0$ to satisfy \ref{C2}. That is, \LZadd{under the Loewner order}, there almost surely exists $N_{\epsilon}$ for any $\epsilon<r$ such that, for any $n>N_{\epsilon}$ and $\bEta\in \Omega_n\cap B_{\epsilon}(\hat{\bEta}_n)$,
	\begin{equation*}
		I-A_0(\epsilon)\leq L''_n(\bEta)\{L''_n(\hat{\bEta}_n)\}^{-1}\leq I+A_0(\epsilon),
	\end{equation*}
	where $A_0(\epsilon)$ is a $3\times 3$ symmetric positive semi-definite matrix whose elements only depend on $\bEta_0$ and the radius $\epsilon$, and whose largest eigenvalue tends to zero as $\epsilon\rightarrow 0$.
\end{lemma}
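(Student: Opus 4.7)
The plan is to establish that the Hessian $L_n''(\btheta)$ varies slowly relative to its magnitude on a small ball around $\hat{\btheta}_n$, by showing that the third derivative $L_n^{(3)}$ is of order $n$ uniformly on such a ball, and then combining this with the already-established fact (the corollary following Lemma~\ref{lem:pseudo_SLLN}) that $n^{-1}L_n''(\hat{\btheta}_n)$ converges almost surely to the positive definite matrix $-I(\btheta_0)$.

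Concretely, I would apply a first-order Taylor expansion with Lagrange remainder entrywise: for each $(i,j)$ and any $\btheta \in B_\epsilon(\hat{\btheta}_n)\cap\Omega_n$,
\begin{equation*}
\{L_n''(\btheta)\}_{ij} - \{L_n''(\hat{\btheta}_n)\}_{ij} = \sum_\ell \partial_\ell \{L_n''(\btheta_{ij}^\ast)\}_{ij}\,(\theta_\ell - \hat{\theta}_{n,\ell}),
\end{equation*}
for some $\btheta_{ij}^\ast$ on the segment joining $\btheta$ and $\hat{\btheta}_n$. Each entry of $L_n^{(3)}$ is again a linear combination of sums of the canonical form $\sum_i w_i^{-k-a/\xi}(\btheta)\log^b w_i(\btheta)$, with $k$ incremented by one relative to the Hessian. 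The key sub-task is to extend Lemma~\ref{lem:pseudo_SLLN} from pointwise convergence at $\hat{\btheta}_n$ to uniform control: for some $r>0$ depending only on $\btheta_0$, the normalized third-derivative sums are $O(1)$ uniformly on $B_r(\hat{\btheta}_n)\cap\Omega_n$ almost surely. This yields $\|L_n''(\btheta)-L_n''(\hat{\btheta}_n)\| \le c(\btheta_0)\, n\, \epsilon$ uniformly on the $\epsilon$-ball.

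Combining this with the inverse approximation $\{L_n''(\hat{\btheta}_n)\}^{-1} = -n^{-1}\{I(\btheta_0)^{-1}+o(1)\}$ gives
\begin{equation*}
L_n''(\btheta)\{L_n''(\hat{\btheta}_n)\}^{-1} = I + [L_n''(\btheta)-L_n''(\hat{\btheta}_n)]\{L_n''(\hat{\btheta}_n)\}^{-1} = I + R_n(\btheta,\epsilon),
\end{equation*}
with $\|R_n(\btheta,\epsilon)\| \le c'(\btheta_0)\epsilon$ uniformly on $B_\epsilon(\hat{\btheta}_n)\cap\Omega_n$. Taking $A_0(\epsilon)$ to be a symmetric positive semi-definite matrix whose L\"owner order dominates $\pm R_n$---for instance a scalar multiple of the identity proportional to $\epsilon$, with the proportionality constant determined by $\btheta_0$---completes the sandwich inequality, and the largest eigenvalue of $A_0(\epsilon)$ clearly vanishes as $\epsilon\to 0$.

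The main obstacle is the uniform $O(n)$ bound on the canonical sums across the whole ball $B_r(\hat{\btheta}_n)$. The boundary $\partial\Omega_n$ is pinned to the extreme order statistics $Y_{(1)}$ and $Y_{(n)}$, and $\hat{\btheta}_n$ can drift arbitrarily close to that boundary (Proposition 3 of \citet{zhang2020uniqueness}); for $\btheta \ne \hat{\btheta}_n$ inside the ball, the factors $w_i(\btheta)$ associated with extreme order statistics may be small, inflating the negative powers $w_i^{-k-a/\xi}$ that appear in $L_n^{(3)}$. Overcoming this requires the quantitative boundary-approach rate for $\hat{\btheta}_n$, careful splitting of the sums into contributions from extreme and non-extreme order statistics, and the condition $\xi_0>-1/2$, which keeps the relevant exponents $k\xi_0+a+1$ positive so that a strong-law argument can be promoted from a single point to a uniform statement on a neighborhood.
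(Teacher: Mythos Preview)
Your proposal is correct and follows essentially the same approach that the paper outlines: the paper does not give a self-contained proof of this lemma but cites it as Proposition~5 of \citet{zhang2020uniqueness}, summarizing the argument in one sentence as ``$L_n^{(3)}(\btheta)$ can be approximated similarly as in Lemma~\ref{lem:pseudo_SLLN} so that mean-value theorem applies to $L_n''(\btheta)$.'' Your entrywise Taylor expansion with Lagrange remainder, uniform $O(n)$ control of the canonical third-derivative sums on a small ball via an extension of Lemma~\ref{lem:pseudo_SLLN}, and combination with $n^{-1}L_n''(\hat{\btheta}_n)\to -I(\btheta_0)$ is exactly this route, and your identification of the main obstacle---the proximity of $\hat{\btheta}_n$ to $\partial\Omega_n$ and the need for the condition $\xi_0>-1/2$ to keep the exponents $k\xi_0+a+1$ positive---matches what the cited proof must contend with.
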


\subsection{Consistency of the posterior distribution}\label{sec:posteriorConsistency}
The final condition~\ref{C3} requires the posterior probability mass to asymptotically vanish outside neighborhoods of the MLE.  That is,
\begin{equation}\label{eqn:ratio_int_C_n}
	\int_{\Omega_n\setminus B_r(\hat{\bEta}_n)}\pi_n(\bEta)d\bEta=\frac{1}{C_n} \int_{\Omega_n\setminus B_r(\hat{\bEta}_n)}\prod_{i=1}^n p(Y_i\given \bEta)\pi(\bEta)d\bEta\rightarrow 0\quad \text{a.s.}
\end{equation}
Under \ref{C1} and \ref{C2}, the greatest lower bound can be obtained for the normalizing constant $C_n$.
\begin{lemma}\label{lem:lower_bound}
	Under the assumptions of Lemma~\ref{lem:pseudo_SLLN}, let
	\begin{equation}\label{eqn:lower_bound}
		B_n = (2\pi)^{3/2}|-L_n''(\hat{\bEta}_n)|^{-1/2}\pi(\hat{\bEta}_n)\hat{\tau}_n^{-n}\exp(-n)\left\{\prod_{i=1}^n W_i(\hat{\bEta}_n)\right\}^{-1-1/\hat{\xi}_n},
	\end{equation}
	in which $W_i(\bEta)$ is defined in \eqref{wi} and $\hat{\bEta}_n=(\hat{\tau}_n,\hat{\beta}_n,\hat{\xi}_n)$. Then:
	\begin{enumerate}[label=(\roman*)]
		\item There almost surely exists $N$ such that, for any $n>N$, $C_n\geq B_n$.
		\item\label{lem:item2} $\lim_{n\rightarrow\infty}C_n/B_n=1$ almost surely if and only if \ref{C3} holds.
		\item  In addition, $\lim_{n\rightarrow\infty} B_n^{1/n}=\tau_0^{-1} \exp\{-(\xi_0\gamma+\gamma+1)\}$ almost surely, where $\gamma$ is the Euler–Mascheroni constant.
	\end{enumerate}
\end{lemma}
\begin{proof}
	Since \ref{C1} and \ref{C2} have been validated in Section~\ref{sec:cond_1_2}, Lemma 2.1 in \citet{chen1985asymptotic}, i.e., Lemma~\ref{lem:bounded_concentration} in Appendix \ref{appendix:revised_proof}, gives
	\begin{equation*}
		\lim_{n\rightarrow \infty} |-L_n''(\hat{\bEta}_n)|^{-1/2}\pi_n(\hat{\bEta}_n)\leq (2\pi)^{-3/2} 
	\end{equation*}
	almost surely. The equality holds if and only if \ref{C3} holds. Therefore, there almost surely exists $N>0$ such that, for all $n>N$,
	\begin{equation*}
		C_n\geq (2\pi)^{3/2}\lvert -L_n''(\hat{\bEta}_n)\given^{-1/2}\exp\{\log\pi(\hat{\bEta}_n)+L_n(\hat{\bEta}_n)\},
	\end{equation*}
	which can be expanded into the stated lower bound using \eqref{Log-lik} and one of the score equations $\sumN W_i^{-1/\hat{\xi}_n}(\hat{\bEta}_n)=n$. Finally, the limit of the lower bound can be obtained using Lemma~\ref{lem:pseudo_SLLN} and the continuity of the prior around the true parameters $\bEta_0$.
\end{proof}

If \ref{C3} holds, then asymptotic normality is verified for the GEV distribution and $C_n\approx B_n$ for all sufficiently large $n$. Moreover, $C_n\sim \tau_0^{-n} \exp\{-n(\xi_0\gamma+\gamma+1)\}$, which becomes arbitrarily small. To show $\int_{\Omega_n\setminus B_r(\hat{\bEta}_n)}\pi_n(\bEta)d\bEta\rightarrow 0$ almost surely, we study the numerator in \eqref{eqn:ratio_int_C_n} by partitioning the domain of the integral into five regions, so that $\Omega_n\setminus B_r(\hat{\bEta}_n)=\cup _{k=1}^5\Omega_n^k$, and by examining each region separately.  For conciseness, we only verify this condition for the case $\xi_0>0$. The proof for the cases $\xi_0=0$ and $-1/2<\xi_0<0$ are analogous (see Appendix \ref{appendix:outline}). The regions we consider for case $\xi_0>0$ are
\begin{equation*}
	\begin{split}
		\Omega_n^1&=\{\bEta\in \Omega_n\setminus B_{r}(\bEta_0):0<\xi<\xi_0+r_1, \beta_0-r_2<\beta<Y_{(1)}, \tau<\tau_0+r_3\},\\
		\Omega_n^2&=\{\bEta\in \Omega_n:\xi>\xi_0+r_1\},\\ \Omega_n^3&=\{\bEta\in \Omega_n:0<\xi<\xi_0+r_1, \beta<\beta_0-r_2,\tau<\tau_0+r_3\},\\
		\Omega_n^4&=\{\bEta\in \Omega_n:0<\xi<\xi_0+r_1, \tau>\tau_0+r_3\},\; \Omega_n^5=\{\bEta\in \Omega_n:-1/2<\xi<0\},  
	\end{split}
\end{equation*}
where $r_1,\;r_2,\;r_3>r$ are pre-specified constants such that
\begin{equation}\label{eqn:range_split}
	\begin{split}
		\log\left(1+\frac{r_1}{\xi_0}\right)&>\frac{4}{\xi_0}-\log 2+\gamma,\quad r_2>1,\\
		\log\left(1+\frac{r_3}{\tau_0}\right)&>(\xi_0+r_1+1)\log\frac{\xi_0+r_1+1}{e}+(1+\xi_0)\gamma+1.
	\end{split}
\end{equation}
We pre-specify $r_1,\;r_2,$ and $r_3$ by \eqref{eqn:range_split} to make sure that at least one parameter in $\Omega_n^k$, $k=2,3,4$, is sufficiently far away from $\bEta_0$; see Fig.~\ref{fig:range_split} for an illustration. Denote the contributions to the numerator in \eqref{eqn:ratio_int_C_n} corresponding to integrals over these sub-regions as $C_n^{(1)},\ldots,C_n^{(5)}$ respectively. Then condition~\ref{C3} is equivalent to the condition $\lim_{n\rightarrow\infty} C_n^{(k)}/C_n=0$, $k=1,\ldots,5$ almost surely.
\begin{figure}[t]
	\centering
	\includegraphics[width=0.75\linewidth]{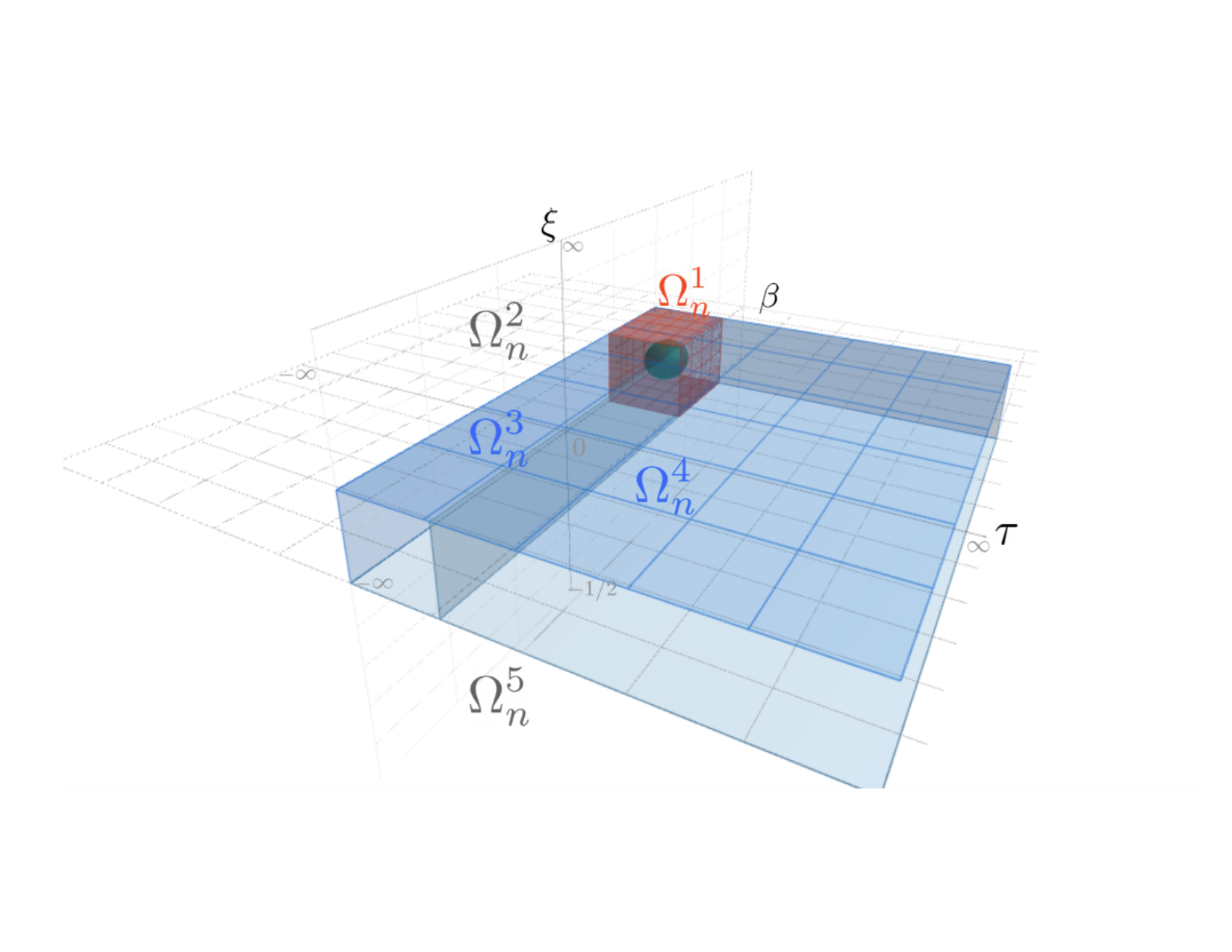}
	\caption{Five sub-regions of $\Omega_n\setminus B_{r}(\hat{\bEta}_n)$ shown under the parametrization $(\tau,\beta,\xi)$ . The neighborhood $B_{r}(\hat{\bEta}_n)$ is shown in dark green. See equation \eqref{eqn:range_split} for details.}
	\label{fig:range_split}
\end{figure}

We begin with $\Omega_n^1$, which envelopes the ball containing $\bEta_0$ and $\hat{\bEta}_0$. To show that it has an asymptotically negligible amount of posterior mass, we apply an open cover argument similar to that of \citet[Proposition 2]{dombry2015existence}.
\begin{proposition}\label{prop:compact_int}
	Let $r>0$ be a small constant, and $K=\{\bEta:0\leq \xi\leq\xi_0+r_1, \beta_0-r_2\leq\beta\leq \beta_0+r_2, 0\leq \tau\leq\tau_0+r_3\}$. 
	Then $K$ is a compact neighborhood of $\bEta_0$. Under the assumptions of Lemma~\ref{lem:pseudo_SLLN},
	\begin{equation}\label{eqn:compact_int}
		\lim_{n\rightarrow\infty}\int_{K\cap\Omega_n\setminus B_{r}(\bEta_0)} \pi_n(\bEta)d\bEta\rightarrow 0 \qquad a.s.
	\end{equation}
\end{proposition}
\begin{proof}
	See Appendix \ref{appendix:C_n_in_K}.
\end{proof}
This result establishes that $\lim_{n\rightarrow\infty} C_n^{(1)}/C_n=0$ almost surely as long as $r_1,\;r_2,\;r_3>r$.  For the other four sub-regions, the integrals can all be iterated one parameter at a time using Fubini's theorem. The calculations are similar to those found in \citet{northrop2016posterior}.  
In contrast to \citet{northrop2016posterior}, however, we derive tighter upper bounds for $C_n^{(k)}$, $k=2,\ldots,5$ in the proof of the following result.
\begin{proposition}\label{prop:integral_all_regions}
	Assume the conditions of Lemma~\ref{lem:pseudo_SLLN}. For the class of prior densities that satisfy Condition \ref{cond_for_prior}, $\lim_{n\rightarrow\infty} C_n^{(k)}/C_n=0$ almost surely, $k=2,\ldots,5$.
\end{proposition}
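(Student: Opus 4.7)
The strategy is to use the lower bound $C_n\geq B_n$ established in the preceding lemma, together with $B_n^{1/n}\to \tau_0^{-1}\exp\{-(\xi_0\gamma+\gamma+1)\}>0$, and to produce, for each $k\in\{2,3,4,5\}$, a matching upper bound for $C_n^k$ that is of strictly smaller exponential order. I work in the coordinates $(\tau,\beta,\xi)$ with $\beta=\mu-\tau/\xi$, which has unit Jacobian relative to $(\tau,\mu,\xi)$ and turns $\Omega_n$ into a product set; this permits iterated integration by Fubini's theorem in the order most convenient for each region.

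In $\Omega_n^3$ (with $\beta<\beta_0-r_2$, $\tau<\tau_0+r_3$, $0<\xi<\xi_0+r_1$) I integrate $\beta$ first. Substituting $u=w_{(1)}^{-1/\xi}(\btheta)$ converts $\int_{-\infty}^{\beta_0-r_2}\prod_{i=1}^n w_i^{-1-1/\xi}\exp(-w_i^{-1/\xi})\,d\beta$ into a gamma-type integral restricted to a region where $u$ is small, yielding an upper bound for $C_n^3/B_n$ whose excess over $B_n$ factorizes as a geometric factor in $r_2$; the hypothesis $r_2>1$ ensures the base of this geometric factor is strictly less than one. For $\Omega_n^4$ (large $\tau$) I first observe that as $\tau\to\infty$ with $\beta,\xi$ fixed, $w_i\to 1$ uniformly in $i$, so $\prod w_i^{-1-1/\xi}\exp(-\sumN w_i^{-1/\xi})\leq c\,e^{-n}$ up to a data-dependent constant; integrating the resulting $\tau^{-n-1}$ over $(\tau_0+r_3,\infty)$ and then the compact $\beta,\xi$ ranges yields a bound proportional to $(\xi_0+r_1+1)^n(\tau_0+r_3)^{-n}e^{-n}$, and the prescribed threshold on $r_3$ makes this strictly smaller than $B_n$ to exponential order.

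Region $\Omega_n^2$ (large $\xi$) is the most delicate because $g(\xi)$ is permitted to grow polynomially. For large $\xi$, $w_i^{-1/\xi}\to 1$ while $w_i^{-1-1/\xi}\sim \tau/[\xi(Y_i-\beta)]$; after integrating $\beta$ over $(-\infty,Y_{(1)})$ via the same substitution as in $\Omega_n^3$, and then $\tau$ against $1/\tau$, the remaining integrand in $\xi$ decays like $\xi^{-n}$ at leading order. Combined with $g(\xi)=O(\xi^\alpha)$, the $\xi$-integral over $(\xi_0+r_1,\infty)$ scales like $(\xi_0+r_1)^{\alpha-n+1}$ times $\Gamma$-function remainders, and the assumed $\log(1+r_1/\xi_0)>4/\xi_0-\log 2+\gamma$ is exactly the inequality needed to absorb those remainders and make the bound smaller than $B_n$. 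For $\Omega_n^5$ ($\xi<0$ while $\xi_0>0$), the support switches to $\beta>Y_{(n)}$; here $\xi$ is bounded away from $\xi_0$ by at least $\xi_0$, and $Y_{(n)}\to\infty$ almost surely under $\xi_0>0$, so after integrating $\beta$ one obtains a bound that decays geometrically in $Y_{(n)}$ relative to $B_n$, and a similar Kullback--Leibler-type exponential gap in $\xi$ closes the estimate.

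The main obstacle is the interplay in $\Omega_n^2$ between the prior's regular variation at infinity and the non-monotone dependence of the GEV likelihood on $\xi$: the parameter $\xi$ appears both in the exponent $-1-1/\xi$ and in $w_i=\xi(Y_i-\beta)/\tau$, so a direct Laplace-type bound must be chosen carefully enough to survive the growth of $g$. A secondary difficulty is that the upper boundary $Y_{(1)}$ of the $\beta$-integral in $\Omega_n^2$ and $\Omega_n^3$ is random and drifts toward $\beta_0$ at a known rate; control of this rate via Proposition 3 of \citet{zhang2020uniqueness} keeps the $\beta$-integrals almost-surely bounded and renders the remaining estimates purely deterministic in $n$.
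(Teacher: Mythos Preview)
Your outline has the right overall architecture (bound each $C_n^k$ above and compare against $B_n$), but several of the concrete steps you describe would not go through as written, and the machinery actually required is absent.

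First, in $\Omega_n^4$ you assert that after the $\tau$-integral you are left with ``compact $\beta,\xi$ ranges''. This is false: $\Omega_n^4$ places no restriction on $\beta$ beyond $\beta<Y_{(1)}$, so the $\beta$-integral runs over the full half-line $(-\infty,Y_{(1)})$. You cannot simply bound the integrand pointwise by $c\,e^{-n}\tau^{-n-1}$ and then integrate over a bounded $\beta$-box; you must control the $\beta$-integral explicitly, and its value depends on $\tau$ and $\xi$ in a way that feeds back into the $\tau$- and $\xi$-integrals. The paper handles this by first using the harmonic--geometric mean inequality to collapse $\prod_i v_i^{-1/\xi-1}$ into a power of $\sum_i v_i^{-1/\xi}$, then making the substitution $q=(\tau_0+r_3)^{1/\xi}\sum_i v_i^{-1/\xi}$ in $\beta$, which reduces the joint $\beta$--$\tau$ integral to the identity $\int_0^\infty q^{a-1}\Gamma(b,q)\,dq=\Gamma(a+b)/a$; the factor $(\xi_0+r_1+1)^n$ you anticipate comes only after Stirling is applied to that $\Gamma$-factor.

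Second, for $\Omega_n^2$ your description ``integrating $\tau$ against $1/\tau$'' after a $\beta$-integral is not a workable order of operations. In the paper the $\tau$-integral is performed \emph{first}, via the substitution $t=\tau^{1/\xi}\sum_i v_i^{-1/\xi}$, which yields exactly $\Gamma(n)$; only then does the subsequent substitution $s=(Y_{(1)}-\beta)^{-1}$ convert the $\beta$-integral into the elliptic integral $\int_0^\infty s^{n-2}\prod_{j=2}^n(1+\delta_j s)^{-1/\xi-1}\{1+\sum_j(1+\delta_j s)^{-1/\xi}\}^{-n}\,ds$. Bounding this object is the heart of the argument and requires the Carlson $R$-function inequality (the paper's Lemma~\ref{lem:carlson}) together with a sharp asymptotic for the beta function (Lemma~\ref{approx_beta}) and for the incomplete gamma function (Lemma~\ref{lem:incomplete_gamma}). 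None of these are implied by your heuristic ``the remaining integrand decays like $\xi^{-n}$''.

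Third, the claim for $\Omega_n^3$ that the bound ``factorizes as a geometric factor in $r_2$'' with base $<1$ is not what happens. The role of $r_2>1$ is only to guarantee $Y_{(j)}-\tilde\beta>1$ so that $\prod_i\tilde\delta_i$ is large; the actual exponential gap comes from the quantile estimate $Y_{(\lceil\log n\rceil)}-\beta_0\gtrsim (n/\log n)^{\xi_0}$, giving a $-\tfrac{\xi_0}{2}\log n$ term in the exponent, not a constant geometric rate in $r_2$. Finally, for $\Omega_n^2$ the comparison $C_n^2/B_n$ ultimately rests on the spacing lemma (the paper's Lemma~\ref{lem:delta_diff}) bounding $\prod_{j\ge 2}(Y_{(j)}-\hat\beta_n)/(Y_{(j)}-Y_{(1)})$, which uses Chung's law of the iterated logarithm for the empirical process; this random product is precisely where the constant $4/\xi_0$ in the threshold for $r_1$ originates, and nothing in your proposal produces it.
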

\begin{proof}
	See Appendix \ref{appendix:C_n_2-5}. The proof relies heavily on the properties of the incomplete gamma and beta functions, which are listed in \citet{olver2010nist}.
\end{proof}
Paired with Proposition~\ref{prop:compact_int}, Proposition~\ref{prop:integral_all_regions} implies that condition~\ref{C3} holds. Since all three conditions in Lemma~\ref{lem:Chen_conditions} have been verified, the main result stated in Theorem~\ref{thm:main_result_posterior_normality} follows.

\section{Simulation study}\label{sec:simulation}
In this section, we perform simulations to examine the sample size $n$ required for the posterior asymptotic normality to manifest.

We simulate i.i.d. samples of sizes $n=50,\; 100,\; 500$, and $1000$ from a GEV distribution with $(\tau_0,\mu_0,\xi_0) = (2, 10, 0.2)$.  Under these parameters, the GEV density $p(y\vert \btheta_0)$ is right-skewed. For each sample, we pair the joint density $\prod_{i=1}^n p(Y_i\given \btheta)$ with the reference prior proposed by \citet{zhang2024reference}
\begin{equation}\label{eqn:h11}
	\pi(\btheta) \propto  \frac{1}{\tau}h^{1/2}_{11}(\xi)
\end{equation}
under the ordered parameterization $(\xi,\mu,\tau)$ or $(\xi,\tau,\mu)$, which prioritizes $\xi$ as the object of interest; see Proposition 2 in \citet{zhang2024reference} for the exact expression of $h_{11}(\xi)$. Then we obtain 100,000 posterior samples from the resulting $\pi_n(\btheta)$ using an adaptive metropolis algorithm \citep{shaby-2010a}  thinning by 10 steps and discarding the first 90,000 samples as burn-in. We then use a kernel density estimator to examine the bivariate marginal posterior densities. The results are shown in Figure \ref{fig:pos_den_est}. We see that the normality starts to emerge when $n=100$ and the joint densities become perfectly bell-shaped as $n$ grows to $500$. This is also supported by the E-statistics test of trivariate normality \citep{rizzo2018statistics}, from which the $p$-values are $2.2\times 10^{-16}$, $0.032$, $0.504$ and $0.604$ for $n=50,\; 100,\; 500$ and $1000$ respectively. Moreover, as $n$ increases, the posterior densities become progressively more concentrated around the MLE $\hat{\btheta}_n$. However, rather than being centered at the true parameters, the posterior densities appear to be centered at the MLEs, which show evident bias for finite samples. The MLE $\hat{\btheta}_n$ is strongly consistent for  $\btheta_0$, but displays non-negligible bias, even when $n=1,000$, especially for $\hat{\xi}_n$. Poor finite-sample properties of estimators of $\xi$ have long been acknowledged in the literature \citep[e.g.]{martins2000generalized}. 
\begin{figure}
	\centering
	\includegraphics[width=\linewidth]{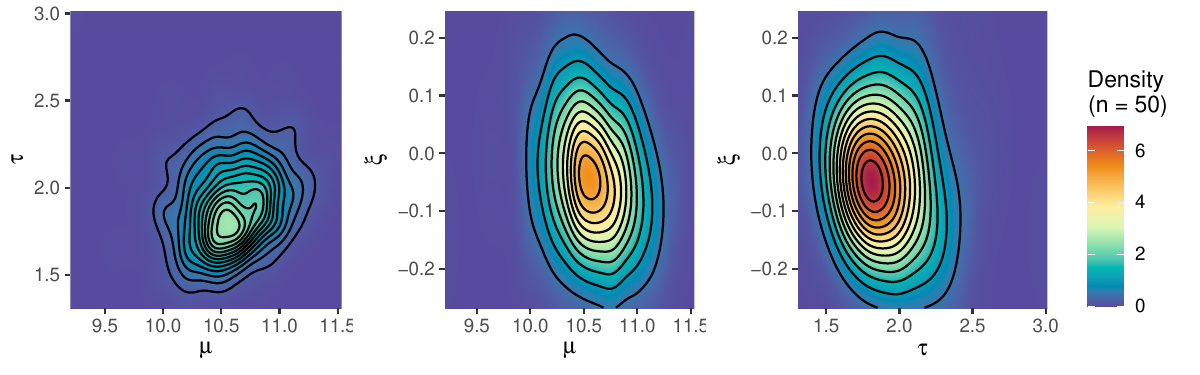}
	\includegraphics[width=\linewidth]{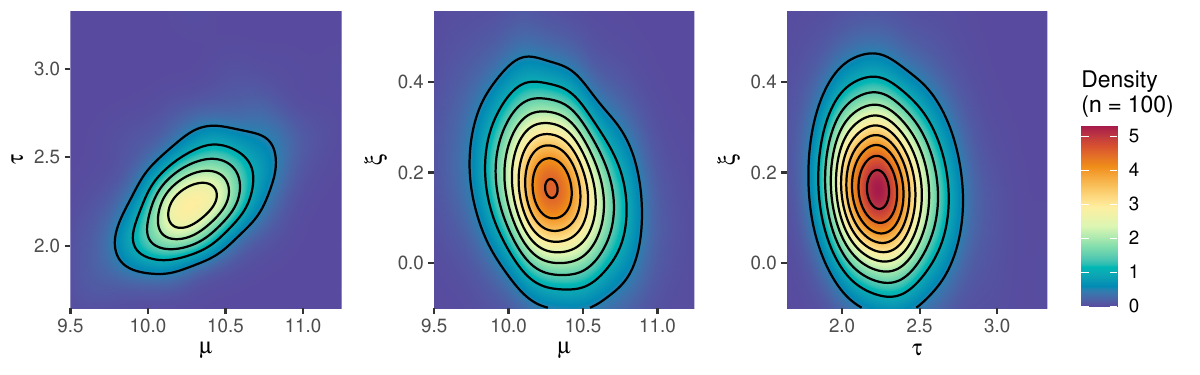}
	\includegraphics[width=\linewidth]{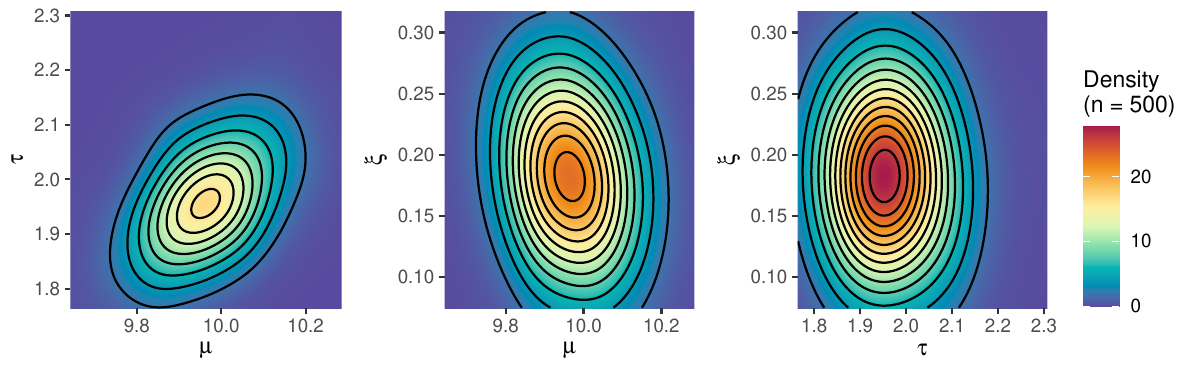}
	\includegraphics[width=\linewidth]{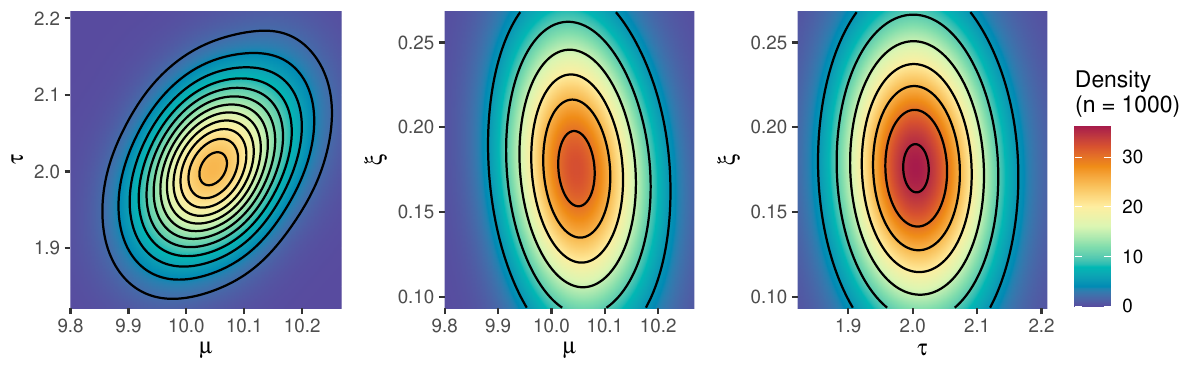}
	\caption{Posterior density estimates  based on i.i.d. samples from the GEV distribution with $(\tau_0,\mu_0,\xi_0) = (2, 10, 0.2)$ and sample sizes $n=50,\;100,\;500,\;1000$. The prior used in this set of simulations is defined in \eqref{eqn:h11}.}
	\label{fig:pos_den_est}
\end{figure}

To further examine the sub-asymptotic biasedness of the MLE, we standardize the posteriors samples of $\btheta$ by the true $\btheta_0$ and $I^{-1}(\btheta_0)/n$, and then by the MLE $\hat{\btheta}_n$ and $\hat{I}^{-1}(\hat{\btheta}_n)/n$, in which $I(\cdot)$ is the Fisher information matrix (see Appendix S1 of \citet{zhang2024reference}) and $\hat{I}(\hat{\btheta}_n)=-n^{-1}L''_n(\hat{\btheta}_n)$. Next, we compare the two standardized samples against the standard normal distribution via QQ-plot. The results for $\xi$ are shown in Figure \ref{fig:qq-plots}, in which we see that the QQ points are highly aligned with the QQ-lines when $n\geq 500$ but the posterior distribution of $\xi$ is centered at $\hat{\xi}_n$ instead of $\xi_0$. See corresponding plots for $\mu$ and $\tau$ in Figure~\ref{fig:qq-plots-tau} and \ref{fig:qq-plots-mu} in Appendix \ref{appendix:simulation}.
\begin{figure}
	\centering
	\includegraphics[width=1.1\linewidth]{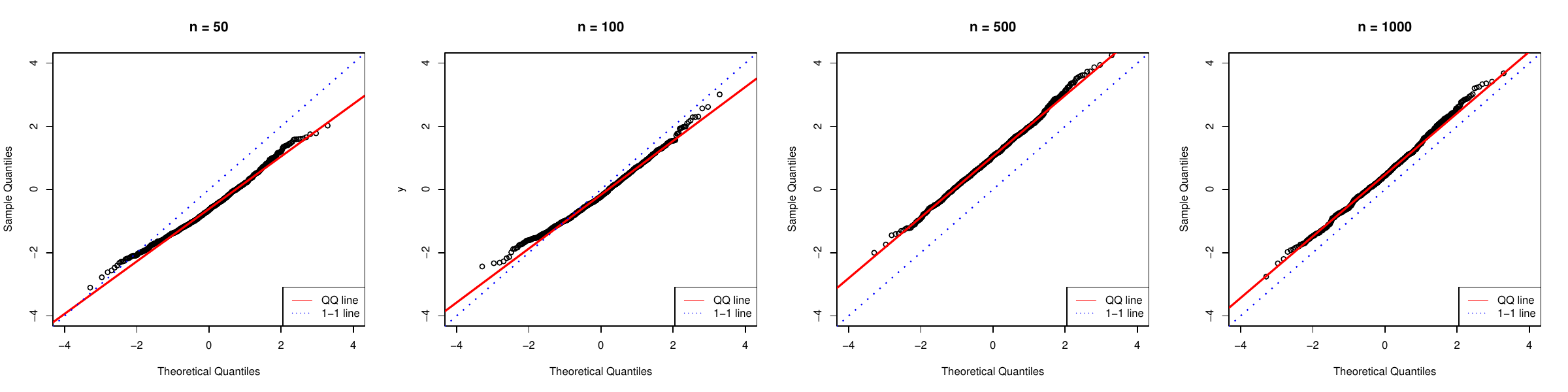}
	\includegraphics[width=1.1\linewidth]{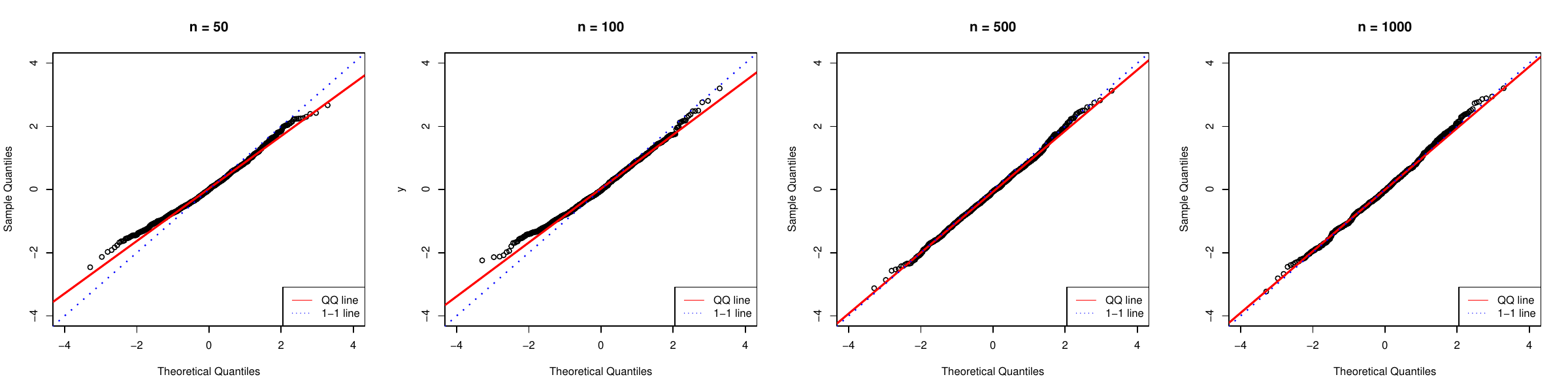}
	\caption{Standard normal QQ-plots for the posterior samples of $\xi$ centered by its asymptotic mean $\xi_0$ and standardized by $(nI_{33})^{-1/2}$ (top panels), and centered by the MLE $\hat{\xi}_n$ and standardized by $(n\hat{I}_{33})^{-1/2}$ (bottom panels). Here, $I_{33}^{-1/2}$ and $\hat{I}_{33}^{-1/2}$ denote the square roots of the third diagonal elements of $I^{-1}(\btheta_0)$ and $\hat{I}^{-1}(\hat{\btheta}_n)$ respectively, where $I(\btheta_0)$ is the Fisher information matrix and $\hat{I}(\hat{\btheta}_n)=-n^{-1}L''_n(\hat{\btheta}_n)$.}
	\label{fig:qq-plots}
\end{figure}

\LZadd{In Appendix \ref{appendix:simulation}, we also show that convergence of the posterior distribution to normality for the GEV family is unaffected by the choice of prior; see Figure \ref{fig:pos_den_est_prior}. We also demonstrate convergence of the posterior distribution under different GEV parameter settings, using $\xi_0=-0.2$ and $\xi_0=0$; see Figures \ref{fig:zero_neg}--\ref{fig:qq_neg} in Appendix \ref{appendix:simulation}.}


\section{Discussion}\label{sec:discussion}
In this paper, we formally derive the asymptotic posterior normality for the family of GEV distributions under a large class of priors. Proposition~\ref{prop:compact_int} demonstrates that the posterior approximation can be easily obtained if the parameter space $\Theta$ is compact, as is generally the case \citep[Chapter 10]{van2000asymptotic}.  The bulk of the technical difficulty lies in the evaluation of the integral of $\prod_{i=1}^n p(Y_i\given \bEta)\pi(\bEta)$ outside the neighborhood of $\bEta_0$, which is closely related to the elliptic integral equivalent to the Carlson $R$-function; see Lemma~\ref{lem:carlson}. An interesting side result from the asymptotic posterior normality is that the lower bound in \eqref{eqn:lower_bound} is attained asymptotically. Therefore, it provides a novel way of estimating an integral that would be difficult to calculate otherwise.

Another major implication from the asymptotic posterior normality is the simplification of the derivation of the reference prior suggested by \citet{berger2009formal}. This class of rule-based priors attempts to maximize the information from the sample so that in a formal sense the prior has a minimal effect on posterior inference. The reference prior formulation is attractive in part because it overcomes the paradoxes of \citet{stein1956inadmissibility} and  \citet{dawid1973marginalization}, which may occur with flat priors and Jeffreys priors. In the general multivariate case, computing reference priors requires multiple applications of a difficult sequence of integrals. Under asymptotic posterior normality, however, the procedure can be streamlined into a rote algorithm  requiring only the Fisher information matrix.

\appendix
\section{Lower bound of the normalizing constant}\label{appendix:revised_proof}
\begin{lemma}\label{lem:bounded_concentration}
Under the assumptions of Lemma~\ref{lem:Chen_conditions}, conditions~\ref{C1} and \ref{C2} together imply that 
\begin{equation}\label{bounded-cctration}
    \lim_{n\rightarrow\infty}\pi_n(\hat{\bphi}_n)|\Sigma_n|^{1/2}\leq (2\pi)^{-k/2}\qquad \text{a.s.},
\end{equation}
where $\Sigma_n=\{-L_n''(\hat{\bphi}_n)\}^{-1}$. The equality holds if and only if \ref{C3} is true.
\end{lemma}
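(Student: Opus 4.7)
The plan is to bound the normalizing constant $C_n$ from below by carrying out the Laplace-type approximation on the local ball $B_r(\hat{\btheta}_n)$, and to show that the resulting bound is sharp precisely when the posterior mass outside this ball vanishes (which is \ref{C3}). Throughout I would use that $\pi_n(\hat{\btheta}_n)|\Sigma_n|^{1/2} = \pi(\hat{\btheta}_n)\exp\{L_n(\hat{\btheta}_n)\}|\Sigma_n|^{1/2}/C_n$, so an asymptotic lower bound on $C_n$ translates directly into the desired upper bound on the quantity on the left side of \eqref{bounded-cctration}.

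First I would split $C_n = I_n^{\text{in}} + I_n^{\text{out}}$, where $I_n^{\text{in}}$ integrates over $B_r(\hat{\btheta}_n)$ and $I_n^{\text{out}}$ over its complement in $\Omega_n$. On the local ball, I apply the second-order Taylor expansion with integral remainder,
\begin{equation*}
L_n(\btheta) - L_n(\hat{\btheta}_n) = \tfrac{1}{2}(\btheta - \hat{\btheta}_n)^{\T} L_n''(\btheta_n^{+})(\btheta - \hat{\btheta}_n),
\end{equation*}
and invoke \ref{C2} to sandwich $L_n''(\btheta_n^{+})$ between $\{I \pm A(\epsilon)\} L_n''(\hat{\btheta}_n)$ for any prescribed $\epsilon>0$ and all sufficiently large $n$. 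Continuity of $\pi$ at $\btheta_0$ together with strong consistency of $\hat{\btheta}_n$ gives $\pi(\btheta) = \pi(\hat{\btheta}_n)\{1+o(1)\}$ uniformly on the ball. After substituting $u = \{-(I-A(\epsilon))L_n''(\hat{\btheta}_n)\}^{1/2}(\btheta-\hat{\btheta}_n)$, the local integral becomes
\begin{equation*}
I_n^{\text{in}} \ge \pi(\hat{\btheta}_n)\exp\{L_n(\hat{\btheta}_n)\}\,|\Sigma_n|^{1/2}\,|I-A(\epsilon)|^{-1/2}\int_{R_{n,\epsilon}}\exp(-\|u\|^2/2)\,du\,\{1+o(1)\},
\end{equation*}
where the rescaled region $R_{n,\epsilon}$ is of diameter $\asymp r/\|\Sigma_n^{1/2}\|$.

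Next, \ref{C1} makes the largest eigenvalue of $\Sigma_n$ vanish, so $R_{n,\epsilon}$ expands to fill $\mathbb{R}^k$, and the rescaled Gaussian integral tends to $(2\pi)^{k/2}$. Letting $\epsilon\downarrow 0$ makes $|I-A(\epsilon)|^{-1/2}\to 1$ by the eigenvalue condition on $A(\epsilon)$. Therefore
\begin{equation*}
\liminf_{n\to\infty}\frac{C_n}{\pi(\hat{\btheta}_n)\exp\{L_n(\hat{\btheta}_n)\}\,|\Sigma_n|^{1/2}} \ge (2\pi)^{k/2},
\end{equation*}
which rearranges to \eqref{bounded-cctration}.

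Finally, I would handle the equality characterization. The chain of inequalities above is asymptotically tight on $I_n^{\text{in}}$, so the only source of slack is $I_n^{\text{out}}$. Writing the ratio
\begin{equation*}
\pi_n(\hat{\btheta}_n)|\Sigma_n|^{1/2} = \frac{\pi(\hat{\btheta}_n)\exp\{L_n(\hat{\btheta}_n)\}|\Sigma_n|^{1/2}}{I_n^{\text{in}} + I_n^{\text{out}}},
\end{equation*}
equality in \eqref{bounded-cctration} is equivalent to $I_n^{\text{out}}/I_n^{\text{in}} \to 0$ almost surely. Since $I_n^{\text{in}}/C_n = 1 - \int_{\Omega_n\setminus B_r(\hat{\btheta}_n)}\pi_n(\btheta)\,d\btheta$ and the corresponding statement for $I_n^{\text{out}}$ is the integral in \ref{C3}, this ratio tends to zero iff \ref{C3} holds. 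The main obstacle is verifying that the local Gaussian integral genuinely approaches $(2\pi)^{k/2}$ along the sequence; this requires handling the interplay between the shrinking scale $\Sigma_n^{1/2}$ from \ref{C1} and the fixed radius $r$ from \ref{C2}, for which the dominated convergence argument applied to the expanding regions $R_{n,\epsilon}$ is the cleanest route.
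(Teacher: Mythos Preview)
Your proposal is correct and follows essentially the same Laplace-type argument as the paper: Taylor-expand on the local ball, control the Hessian via \ref{C2}, invoke \ref{C1} so the rescaled Gaussian integral fills $\mathbb{R}^k$, and identify equality with $P_n:=\int_{B_r(\hat{\btheta}_n)}\pi_n(\btheta)\,d\btheta\to 1$, which is exactly \ref{C3}. One small slip: for the \emph{lower} bound on $I_n^{\text{in}}$ the relevant sandwich is $-L_n''(\btheta_n^+)\le (I+A(\epsilon))\Sigma_n^{-1}$, so the Jacobian factor should be $|I+A(\epsilon)|^{-1/2}$ rather than $|I-A(\epsilon)|^{-1/2}$; this is immaterial once $\epsilon\to 0$.
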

\begin{proof}
Fix $\epsilon>0$, and let $n>\max\{N,N_{\epsilon}\}$ and $r>0$ as given in \ref{C2}. Since $\pi(\bphi)$ is positive and continuous at $\bphi_0$, there exists $0<r_0<r$ and $m(r_0),\; M(r_0)>0$ such that, for any $\bphi_1, \bphi_2\in B_{r_0}(\bphi_0)$, $m(r_0)\leq \pi(\bphi_1)/\pi(\bphi_2)\leq M(r_0)$. Also, both $m(r_0)$ and $M(r_0)$ tend to 1 as $r_0\rightarrow 0$.

By the strong consistency of $\hat{\bphi}_n$,  there almost surely exists $N'>0$ such that for any $n>N'$, $B_{r_0/2}(\hat{\bphi}_n)\subset B_{r_0}(\bphi_0)\subset B_{r}(\bphi_0)$. Then, for $\bphi\in B_{r_0/2}(\hat{\bphi}_n)$, a simple Taylor expansion produces
\begin{equation*}
    \begin{split}
         \pi_n(\bphi)&= \pi_n(\hat{\bphi}_n)\exp\{\log \pi(\bphi)-\log \pi(\hat{\bphi}_n)\}\exp\{L_n(\bphi)-L_n(\hat{\bphi}_n)\}\\
         &=\pi_n(\hat{\bphi}_n)\{\pi(\bphi)/ \pi(\hat{\bphi}_n)\}\exp\left\{-\frac{1}{2}(\bphi-\hat{\bphi}_n)^TL''_n(\bphi_n^+)(\bphi-\hat{\bphi}_n)\right\},
    \end{split}
\end{equation*}
where 
\begin{equation*}
    L''_n(\hat{\bphi}_n)(I-A(\epsilon))\leq L''_n(\bphi_n^+)\leq L''_n(\hat{\bphi}_n)(I+A(\epsilon)),
\end{equation*}
for some $\bphi_n^+$ lying between $\bphi$ and $\hat{\bphi}_n$. Now  consider the posterior probability
\begin{equation*}
    P_n=\int_{B_{r_0/2}(\hat{\bphi}_n)} \pi_n(\bphi)d\bphi.
\end{equation*}
It can be bounded above by
\begin{equation*}
    P_n^+= \pi_n(\hat{\bphi}_n)M(r_0)|\Sigma_n|^{1/2}|I-A(\epsilon)|^{-1/2}\int_{|\boldsymbol{z}|<s_n}\exp(-\boldsymbol{z}^t\boldsymbol{z}/2)d\boldsymbol{z},
\end{equation*}
and bounded below by
\begin{equation*}
    P_n^-= \pi_n(\hat{\bphi}_n)m(r_0)|\Sigma_n|^{1/2}|I+A(\epsilon)|^{-1/2}\int_{|\boldsymbol{z}|<t_n}\exp(-\boldsymbol{z}^t\boldsymbol{z}/2)d\boldsymbol{z},
\end{equation*}
where $s_n=r_0\{1-\underline{a}(\epsilon)\}/(2\underline{\sigma}_n)$ and $t_n=r_0(1+\bar{a}(\epsilon))/(2\bar{\sigma}_n)$, with $\underline{a}(\epsilon)$ and $\underline{\sigma}_n$ being the smallest eigenvalues of $A(\epsilon)$ and $\Sigma_n$, and $\bar{a}(\epsilon)$ and $\bar{\sigma}_n$ being the largest eigenvalues of $A(\epsilon)$ and $\Sigma_n$, respectively.

Because of \ref{C1}, both $s_n$ and $t_n$ tend to $\infty$ as $n\rightarrow\infty$. We can then establish
\begin{equation*}
   \begin{split}
       (2\pi)^{k/2} m(r_0)&|I+A(\epsilon)|^{-1/2}\lim_{n\rightarrow\infty} \pi_n(\hat{\bphi}_n)|\Sigma_n|^{1/2}\leq \lim_{n\rightarrow\infty} P_n\\
       &\leq (2\pi)^{k/2} M(r_0)|I-A(\epsilon)|^{-1/2}\lim_{n\rightarrow\infty} \pi_n(\hat{\bphi}_n)|\Sigma_n|^{1/2}
   \end{split}
\end{equation*}
As $\epsilon\rightarrow 0$, $r_0\rightarrow 0$, and thus $M(r_0)\rightarrow 1$. Moreover, $|I\pm A(\epsilon)|^{1/2}\rightarrow 1$, $m(r_0)\rightarrow 1$ as $\epsilon\rightarrow 0$.
Because $P_n\leq 1$ for all $n$, we can now conclude
\begin{equation*}
    \lim_{n\rightarrow\infty}\pi_n(\hat{\bphi}_n)|\Sigma_n|^{1/2}\leq (2\pi)^{-k/2},
\end{equation*}
where the equality holds if and only if $\lim_{n\rightarrow\infty}P_n=1$.
\end{proof}

\section{Integration over a compact neighbourhood}\label{appendix:C_n_in_K}
Under the $\bEta$-parameterization, denote $l_{\bEta}(y)=\log p(y\given \bEta)$ when $\bEta\in\Theta$ and $l_{\bEta}(y)=-\infty$ when $\bEta\not\in\Theta$, and define $l_B(y)=\sup_{\bEta\in B}l_{\bEta}(y)$ for $B\subset \mathbb{R}^3$. Also, we borrow notation from empirical processes and denote the empirical distribution of $(Y_1,\ldots,Y_n)$ by $\mathbb{P}_n$. Additionally, let $\mathbb{P}_n[f]$ and $P_{\bEta_0}[f]$ signify the integrals of the measurable function $f$ with respect to $\mathbb{P}_n$ and $P_{\bEta_0}$. 

Lemma 6 in \citet{dombry2015existence} guarantees that if $\bEta\in\Theta$ is an interior point of $\Theta$, 
\begin{equation*}
    \lim_{\epsilon\rightarrow 0} P_{\bEta_0}[l_{B(\bEta,\epsilon)}]=P_{\bEta_0}[l_{\bEta}].
\end{equation*}
The following lemma extends this result to $\bEta$ on (part of) the boundary of $\Theta$.

\begin{lemma}\label{lem:dombry_lem6}
If $\xi_0\neq 0$, denote $\Omega=\{\bEta\in\Theta:\text{sgn}(\xi)=\text{sgn}(\xi_0),\xi\{\beta_0-\beta\}>0,\tau>0\}$ and the boundary points of $\Omega$ by $\partial \Omega$. Then for any $\bEta\in \partial \Omega$,
\begin{equation}\label{eqn:dombry_lem6}
    \lim_{\epsilon\rightarrow 0} P_{\bEta_0}[l_{B(\bEta,\epsilon)}]=P_{\bEta_0}[l_{\bEta}],
\end{equation}
in which $P_{\bEta_0}[l_{\bEta}]=-\infty$ if $\bEta\not\in\Theta$.
\end{lemma}
\begin{remark}
The set $\Omega$ arises as the limit of the log-likelihood domain; that is, $\Omega_n\rightarrow\Omega$ almost surely as $n\rightarrow\infty$ (see Appendix E.2 in \citet{zhang2020uniqueness}).
\end{remark}
\begin{proof}
When $\xi_0>0$, $\Omega=\{\bEta:\tau>0, \beta<\beta_0, \xi>0\}$ and $\partial \Omega$ consists of $\{\bEta:\tau=0, \beta<\beta_0, \xi>0\}$, $\{\bEta:\tau>0, \beta=\beta_0,\xi>0\}$ and $\{\bEta:\tau>0, \beta<\beta_0,\xi=0\}$. In the following, we examine each subset of $\partial\Omega$.
\begin{enumerate}[label=(\arabic*)]
    \item $\bEta=(0,\beta,\xi)$ with $\beta<\beta_0$, $\xi>0$. Consider small $\epsilon$ such that $\beta+\epsilon<\beta_0$, $\xi-\epsilon>0$. Recall that the support of $p(y\given\bEta_0)$ is $\{y:y>\beta_0\}$ when $\xi_0>0$. For $y>\beta_0$ and $\bEta'=(\tau',\beta',\xi')\in B(\bEta,\epsilon)\cap\Theta$, 
    \begin{equation}\label{eqn:relaxation}
        \begin{split}
            l_{\bEta'}(y)&=-\log\tau'-\frac{\xi'+1}{\xi'}\log\left\{\frac{\xi'}{\tau'}(y-\beta')\right\}-\left\{\frac{\xi'}{\tau'}(y-\beta')\right\}^{-1/\xi'}\\
            &=\frac{\log\tau'}{\xi'}-\left(1+\frac{1}{\xi'}\right)\log\{\xi'(y-\beta')\}-\left\{\frac{\xi'}{\tau'}(y-\beta')\right\}^{-1/\xi'}\\
            &<\frac{\log\tau'}{\xi'}-\left(1+\frac{1}{\xi'}\right)\log\{(\xi-\epsilon)(y-\beta_0)\}\\
            &<\frac{1}{\xi+\epsilon}\log\tau'+\left(1+\frac{1}{\xi-\epsilon}\right)\left|\log\{(\xi-\epsilon)(y-\beta_0)\}\right|
        \end{split}
    \end{equation}
    and thus
    \begin{small}
        \begin{equation*}
        P_{\bEta_0}[l_{B(\bEta,\epsilon)}]<\frac{1}{\xi+\epsilon}\log\epsilon+\left(1+\frac{1}{\xi-\epsilon}\right)|\log(\xi-\epsilon)|+\left(1+\frac{1}{\xi-\epsilon}\right)E_{\bEta_0}|\log(Y-\beta_0)|.
    \end{equation*}
    \end{small}
    Since $E_{\bEta_0}|\log(Y-\beta_0)|<\infty$ (see Lemma C.1 of \citet{zhang2020uniqueness}), the right-hand side of the last display converges to $-\infty$ as $\epsilon\rightarrow 0$, completing the proof for $\bEta=(0,\beta,\xi)$.
    
    \item $\bEta=(\tau,\beta_0,\xi)$ with $\tau>0$, $\xi>0$. In this case, since $\bEta$ is an interior point of $\Theta$, the result holds automatically by Lemma 6 of \citet{dombry2015existence}.
    
    \item $\bEta=(\tau,\beta,0)$ with $\tau>0$, $\beta<\beta_0$. Consider small $\epsilon$ such that $\tau-\epsilon>0$, $\beta+\epsilon<\beta_0$. Then for $y>\beta_0$ and $\bEta'\in B(\bEta,\epsilon)$ with $\xi'<0$, $l_{\bEta'}(y)=-\infty$ because $\beta'<\beta_0$. Therefore, we only need to consider $\bEta'\in B(\bEta,\epsilon)$ with non-negative $\xi'$. While fixing $\tau'$ and $\beta'$, $l_{\bEta'}(y)$ is an increasing function of $\xi'$ on $(0,\epsilon)$ for small values of $y$. This is due to the fact that $\xi'^{-1/\xi'}$ is the dominating term in $l_{\bEta'}(y)$ when $\epsilon\rightarrow 0$ and the value of $y$ is controlled. Monotonicity holds as long as $y\leq\beta'+\tau'(1+\epsilon)^{-\xi'}/\xi'$, which means
    \begin{equation*}
    l_{\bEta'}(y)\leq  -\log\tau'-\frac{\epsilon+1}{\epsilon}\log\left[\frac{\epsilon}{\tau'}(y-\beta')\right]-\left[\frac{\epsilon}{\tau'}(y-\beta')\right]^{-1/\epsilon}.
    \end{equation*}
    When $y>\beta'+\tau'(1+\epsilon)^{-\xi'}/\xi'$, we have $\xi'(y-\beta')/\tau'>1$ and $l_{\bEta'}(y)<-\log\tau'$. Furthermore, $\beta'+\tau'(1+\epsilon)^{-\xi'}/\xi'>\beta-\epsilon+(\tau-\epsilon)(1+\epsilon)^{-\epsilon}/\epsilon$ for any $\bEta'\in B(\bEta,\epsilon)\cap\{\xi'>0\}$. Thus, we have
    \begin{small}
        \begin{equation*}
        \begin{split}
            l_{B(\bEta,\epsilon)}(y)\leq -\log(\tau+\epsilon)&-\left\{\frac{\epsilon+1}{\epsilon}\log\left[\frac{\epsilon}{\tau+\epsilon}(y-\beta_0)\right]+\left[\frac{\epsilon}{\tau-\epsilon}(y-\beta+\epsilon)\right]^{-1/\epsilon}\right\}\times\\
            &I\{\beta_0<y\leq\beta-\epsilon+(\tau-\epsilon)(1+\epsilon)^{-\epsilon}/\epsilon\}.
        \end{split}
    \end{equation*}
    \end{small}
    
    Denote 
    $C_{\beta,\tau}(\epsilon)=\beta-\epsilon+(\tau-\epsilon)(1+\epsilon)^{-\epsilon}/\epsilon$. As $\epsilon\rightarrow 0$, $C_{\beta,\tau}(\epsilon)\rightarrow\infty$ and
    \begin{equation*}
        E_{\bEta_0}[\log(Y-\beta_0)\cdot I\{\beta_0<Y\leq C_{\beta,\tau}(\xi)\}]\rightarrow E_{\bEta_0}\log(Y-\beta_0)
    \end{equation*}
    and
    \begin{equation*}
        E_{\bEta_0}[(Y-\beta+\epsilon)^{-1/\epsilon}\cdot I\{\beta_0<Y\leq C_{\beta,\tau}(\xi)\}]\sim E_{\bEta_0}(Y-\beta)^{-1/\epsilon}.
    \end{equation*}
    Using this, we deduce that as $\epsilon\rightarrow 0$,
    \begin{small}
        \begin{equation*}
        P_{\bEta_0}[l_{B(\bEta,\epsilon)}]\leq -\log\tau-\frac{1}{\xi}\log\frac{\epsilon}{\tau}-\frac{1}{\xi}E_{\bEta_0}\log(Y-\beta_0)-\left(\frac{\epsilon}{\tau}\right)^{-1/\epsilon}E_{\bEta_0}(Y-\beta)^{-1/\epsilon}\rightarrow -\infty,
    \end{equation*}
    \end{small}
    which completes the proof for this case.
\end{enumerate}
When $\xi_0<0$, $\Omega=\{\bEta:\tau>0, \beta>\beta_0, \xi<0\}$ and $\partial \Omega$ consists of $\{\bEta:\tau=0, \beta>\beta_0, \xi<0\}$, $\{\bEta:\tau>0, \beta=\beta_0,\xi<0\}$ and $\{\bEta:\tau>0, \beta>\beta_0,\xi=0\}$. We can analyze each subset of $\partial\Omega$ similarly as the previous discussion.
\end{proof}

\begin{lemma}\label{lem:fatou}
For all upper semi-continuous functions $f:\mathbb{R}\rightarrow[-\infty,\infty)$ that are upper-bounded,
\begin{equation*}
    \limsup_{n\rightarrow\infty}\mathbb{P}_n[f]\leq P_{\bEta_0}[f]\qquad \text{a.s.}
\end{equation*}
\end{lemma}
\begin{proof}
This is analogous to Lemma 3 in \citet{dombry2015existence}, in which $(Y_i)_{i\geq 1}$ is treated as coming from a distribution in the domain of attraction of a GEV. Since a GEV distribution $P_{\bEta_0}$ is in its own domain of attraction, the block size sequence $m(n)$ is 1 in our setting. Therefore, the condition $\lim_{m(n)\rightarrow \infty}m(n)=+\infty$ is violated.

However, the proof in our simpler setting is similar. Define the empirical distribution function
\begin{equation*}
    \mathbb{F}_n(y)=\frac{1}{n}\sum_{i=1}^n \mathbf{1}_{\{Y_i\leq y\}}, \qquad y\in\mathbb{R},
\end{equation*}
and the corresponding empirical quantile function
\begin{equation*}
    \mathbb{F}^{-1}_n(u):=\inf\{y\in\mathbb{R}:\mathbb{F}_n(y)\geq u\},\qquad u\in (0,1).
\end{equation*}
We can show that $\mathbb{F}^{-1}_n(u)=Y_{(\lceil nu\rceil-1)}$, where $\lceil \cdot\rceil$ is the ceiling function. Since the distribution function $P_{\bEta_0}$ is differentiable at $P_{\bEta_0}^{-1}(u)$ for any $u\in (0,1)$, we have
\begin{equation*}
    Y_{(\lceil nu\rceil-1)}\rightarrow P_{\bEta_0}^{-1}(u)\qquad \text{a.s.}
\end{equation*}

By the upper semi-continuity of $f$,
\begin{equation}\label{semi_cont}
    \limsup_{n\rightarrow\infty} f(\mathbb{F}^{-1}_n(u))\leq f(P_{\bEta_0}^{-1}(u)).
\end{equation}
Meanwhile, we use the relation $\mathbb{P}_n[f]=\int_0^1 f(\mathbb{F}^{-1}_n(u))du$, and $P_{\bEta_0}[f]=\int_0^1 f(P_{\bEta_0}^{-1}(u)) du$. Because $f$ is upper-bounded, we can apply Fatou's lemma to derive
\begin{equation*}
    \limsup_{n\rightarrow\infty}\int_0^1 f(\mathbb{F}^{-1}_n(u))du \leq \int_0^1\limsup_{n\rightarrow\infty} f(\mathbb{F}^{-1}_n(u))du \leq \int_0^1f(P_{\bEta_0}^{-1}(u))du,
\end{equation*}
with the last inequality due to \eqref{semi_cont}. 
\end{proof}

\begin{lemma}\label{lem:posterior_mass_zero_tau}
If $\xi_0\neq 0$, consider $\bEta^*=(0,\beta^*,\xi^*)\in \partial\Omega$ (See Lemma \ref{lem:dombry_lem6} for the definition of $\Omega$). Given prior $\pi(\bEta)=g(\xi)/\tau$, we have 
\begin{equation*}
    \lim_{n\rightarrow\infty}\int_{B(\bEta^*, \epsilon)\cap\Omega_n} \pi_n(\bEta)d\bEta= 0 \qquad\text{a.s.}
\end{equation*}
on condition that the radius $\epsilon$ is sufficiently small such that $\log\epsilon/(4\xi^*)+\log\tau_0+\xi_0\gamma+\gamma+1+c(\xi^*)/2<0$, where $\gamma$ is the Euler–Mascheroni constant and 
\begin{equation*}
   c(\xi^*)=
   \begin{cases}
   2\left(1+\frac{1}{\xi^*}\right)|\log\xi^*|+\left(1+\frac{2}{\xi^*}\right)E_{\bEta_0}|\log(Y-\beta_0)|, &\text{ if }\xi_0>0,\\
   -2\left(1+\frac{1}{\xi^*}\right)|\log(-\xi^*)|-\left(1+\frac{2}{\xi^*}\right)E_{\bEta_0}|\log(\beta_0-Y)|, &\text{ if }\xi_0<0.
   \end{cases}
\end{equation*}
\end{lemma}
\begin{proof}
When $\xi_0>0$, $B(\bEta^*, \epsilon)\cap\partial \Omega\subset\{\bEta:\tau=0, \beta<\beta_0, \xi>0\}$. By \eqref{eqn:relaxation}, we can find sufficiently small $\epsilon>0$ such that for $\bEta\in B(\bEta^*, \epsilon)\cap\Omega_n$,
\begin{equation}\label{eqn:further_relaxation}
    \begin{split}
        P_{\bEta_0}[l_{\bEta}]&\leq \frac{\log\tau}{(\xi^*+\epsilon)}+\left(1+\frac{1}{\xi^*-\epsilon}\right)|\log(\xi^*-\epsilon)|+\left(1+\frac{1}{\xi^*-\epsilon}\right)E_{\bEta_0}|\log(Y-\beta_0)|\\
        &\leq \frac{\log\tau}{(\xi^*+\epsilon)}+2\left(1+\frac{1}{\xi^*}\right)|\log\xi^*|+\left(1+\frac{2}{\xi^*}\right)E_{\bEta_0}|\log(Y-\beta_0)|\\
        &=\frac{\log\tau}{(\xi^*+\epsilon)}+c(\xi^*).
    \end{split}
\end{equation}
Given that $g(\xi)$ is bounded on any interval $[-1/2,c]$ where $c>-1/2$, we can find an upper bound $\Pi_g>0$ for $g(\xi)$ in $[\xi^*-\epsilon, \xi^*+\epsilon]$. Since $B(\bEta^*, \epsilon)\cap\Omega_n\subset \Omega$, the uniform convergence result in \citet[][Equation (E.7) in Appendix E.2]{zhang2020uniqueness} ensures $\lim_{n\rightarrow\infty}n^{-1}L_n(\bEta)= P_{\bEta_0}[l_{\bEta}]$ uniformly for $\bEta\in B(\bEta^*, \epsilon)\cap\Omega_n$. Therefore, there exists $N>0$ such that $L_n(\bEta)<nP_{\bEta_0}[l_{\bEta}]/2$ for all $n>N$ and
\begin{equation*}
    \begin{split}
        \int_{B(\bEta^*, \epsilon)\cap\Omega_n}& \pi(\bEta)\exp\{L_n(\bEta)\}d\bEta\leq\int_{B(\bEta^*, \epsilon)\cap\Omega_n} \pi(\bEta)\exp\{nP_{\bEta_0}[l_{\bEta}]/2\}d\bEta\\
        &\leq \int_{B(\bEta^*, \epsilon)\cap\Omega_n}\pi(\bEta)\exp\left\{\frac{n\log\tau}{2(\xi^*+\epsilon)}+\frac{nc(\xi^*)}{2}\right\}d\bEta\\
        &=\exp\left\{\frac{nc(\xi^*)}{2}\right\}\int_{B(\bEta^*, \epsilon)\cap\Omega_n}g(\xi)\tau^{\frac{n}{2(\xi^*+\epsilon)}-1}d\bEta\\
        &\leq\Pi_g\exp\left\{\frac{nc(\xi^*)}{2}\right\}\cdot\epsilon^{\frac{n}{2(\xi^*+\epsilon)}-1}V(B_\epsilon)\leq\Pi_g\exp\left\{\frac{nc(\xi^*)}{2}\right\}\cdot\epsilon^{\frac{n}{4\xi^*}}V(B_\epsilon),\\
    \end{split}
\end{equation*}
in which $V(B_\epsilon)$ is the volume of $B(\bEta^*, \epsilon)$. 

Furthermore, we know from Lemma \ref{lem:lower_bound} that $C_n\geq \tau_0^{-n}\exp\{-n(\xi_0\gamma+\gamma+1)\}$ almost surely for all sufficiently large $n$. As a consequence,
\begin{equation*}
    \begin{split}
        \int_{B(\bEta^*, \epsilon)\cap\Omega_n} \pi_n(\bEta)d\bEta &\leq \frac{1}{\tau_0^{-n} \exp\{-n(\xi_0\gamma+\gamma+1)\}}\int_{B(\bEta^*, \epsilon)\cap\Omega_n} \pi(\bEta)\exp\{L_n(\bEta)\}d\bEta\\
        &\leq \Pi_gV(B_\epsilon)\exp\left\{n(\log\tau_0+\xi_0\gamma+\gamma+1)+\frac{nc(\xi^*)}{2}+\frac{n}{4\xi^*}\log\epsilon\right\}.
    \end{split}
\end{equation*}
Since $\log\tau_0+\xi_0\gamma+\gamma+1+c(\xi^*)/2+\log\epsilon/(4\xi^*)<0$, we know $\int_{B(\bEta^*, \epsilon)\cap\Omega_n} \pi_n(\bEta)d\bEta\rightarrow 0$ almost surely as $n\rightarrow\infty$.

When $\xi_0<0$, $\partial \Omega=\{\bEta:\tau=0, \beta>\beta_0, \xi<0\}$. For $\bEta\in B(\bEta^*, \epsilon)\cap\Omega_n$ with sufficiently small radius $\epsilon$, we have $-1/2<\xi<0$ and similarly to \eqref{eqn:further_relaxation},
\begin{equation*}
    \begin{split}
        P_{\bEta_0}[l_{\bEta}]&\leq \frac{\log\tau}{(\xi^*+\epsilon)}-\left(1+\frac{1}{\xi^*+\epsilon}\right)|\log(-\xi^*+\epsilon)|-\left(1+\frac{1}{\xi^*+\epsilon}\right)E_{\bEta_0}|\log(\beta_0-Y)|\\
        &\leq \frac{\log\tau}{(\xi^*+\epsilon)}-2\left(1+\frac{1}{\xi^*}\right)|\log(-\xi^*)|-\left(1+\frac{2}{\xi^*}\right)E_{\bEta_0}|\log(\beta_0-Y)|\\
        &=\frac{\log\tau}{(\xi^*+\epsilon)}+c(\xi^*).
    \end{split}
\end{equation*}

\end{proof}

\begin{proof}[Proof of Proposition~\ref{prop:compact_int}]
Denote  $\Delta=K\setminus B_{r_0}(\bEta_0)$. Firstly, we find a finite open cover of the domain of integration $\Delta$ such that the log-likelihood $L_n(\bEta)=n\mathbb{P}_n[l_{\bEta}]$ is nicely bounded for $\bEta$ in each open set of the cover. To achieve this, we recall that the Kullback-Leibler divergence between $P_{\bEta_0}$ and $P_{\bEta}$ is always positive:
\begin{equation*}
    P_{\bEta_0}[l_{\bEta_0}]-P_{\bEta_0}[l_{\bEta}]>0 \text{ if } \bEta\neq \bEta_0.
\end{equation*}
Lemma 6 in \citet{dombry2015existence} and Lemma \ref{lem:dombry_lem6} together ensure that for $\bEta\in\Delta$,
\begin{equation*}
    \lim_{\epsilon\rightarrow 0} P_{\bEta_0}[l_{B(\bEta,\epsilon)}]=P_{\bEta_0}[l_{\bEta}].
\end{equation*}
In particular, for $\bEta\in K\cap\{\xi=0\text{ or } \tau=0\}$, $P_{\bEta_0}[l_{\bEta}]=-\infty$ by Lemma \ref{lem:dombry_lem6}. Thus, for any $\bEta\in\Delta$, we can find $\epsilon_{\bEta}$ such that $P_{\bEta_0}[l_{B(\bEta,\epsilon_{\bEta})}]<-2+P_{\bEta_0}[l_{\bEta_0}]$ if $P_{\bEta_0}[l_{\bEta}]=-\infty$ and 
\begin{equation}\label{eqn:continuity_bound}
    \left|P_{\bEta_0}[l_{B(\bEta,\epsilon_{\bEta})}]-P_{\bEta_0}[l_{\bEta}]\right|<\frac{1}{2}\left(P_{\bEta_0}[l_{\bEta_0}]-P_{\bEta_0}[l_{\bEta}]\right)
\end{equation}
if $P_{\bEta_0}[l_{\bEta}]>-\infty$. Then all the neighborhoods $\{B(\bEta,\epsilon_{\bEta}):\bEta \in \Delta\}$ together constitute an open cover of $\Delta$. Since $\Delta$ is compact, there exists a finite subcover
\begin{equation*}
    \{B(\bEta_i,\epsilon_{\bEta_i}):\;i=1,\ldots,M\},
\end{equation*}
where $M$ is the number of neighborhoods in the subcover. Let $B_i=B(\bEta_i,\epsilon_{\bEta_i})$, $i=1,\ldots, M$. 

Secondly, we prove that there almost surely exists $N>0$ such that for all $n>N$,
\begin{equation}\label{eqn:result}
    \frac{1}{n}L_n(\bEta)-\frac{1}{n}L_n(\hat{\bEta}_n)=\mathbb{P}_n[l_{\bEta}]-\mathbb{P}_n[l_{\hat{\bEta}_n}]\leq c_i,\;\bEta\in B_i,
\end{equation}
where $c_i=(P_{\bEta_0}[l_{\bEta_i}]-P_{\bEta_0}[l_{\bEta_0}])/4$ if $P_{\bEta_0}[l_{\bEta_i}]>-\infty$ and $c_i=-1$ if $P_{\bEta_0}[l_{\bEta_i}]=-\infty$, $i=1,\cdots, M$ . Note that $l_{B_i}$ is an upper semi-continuous function with upper bound. In view of Lemma~\ref{lem:fatou}, we have $\limsup_{n\rightarrow\infty}\mathbb{P}_n[l_{B_i}]\leq P_{\bEta_0}[l_{B_i}]$ almost surely, which implies there almost surely exists $N_i>0$ such that for all $n>N_i$
\begin{equation*}
    \mathbb{P}_n[l_{B_i}]\leq P_{\bEta_0}[l_{B_i}],
\end{equation*}
and thus for any $\bEta\in B_i$,
\begin{equation}\label{eqn:first_appox}
    \begin{split}
        \mathbb{P}_n[l_{\bEta}]-\mathbb{P}_n[l_{\hat{\bEta}_n}]&\leq \mathbb{P}_n[l_{B_i}]-\mathbb{P}_n[l_{\hat{\bEta}_n}]\leq P_{\bEta_0}[l_{B_i}]-\mathbb{P}_n[l_{\hat{\bEta}_n}].\\
    \end{split}
\end{equation}
Meanwhile, according to Lemma~\ref{lem:pseudo_SLLN},
\begin{equation*}
    \mathbb{P}_n[l_{\hat{\bEta}_n}]\rightarrow P_{\bEta_0}[l_{\bEta_0}]\qquad \text{a.s.}
\end{equation*}
Denote $I=\{i=1,\ldots,M: P_{\bEta_0}[l_{\bEta_i}]>-\infty\}$. Then we can almost surely find $N_0>0$ such that for all $n>N_0$, $\mathbb{P}_n[l_{\hat{\bEta}_n}]>P_{\bEta_0}[l_{\bEta_0}]-1$ and
\begin{equation}\label{eqn:LntoL0}
    \left|\mathbb{P}_n[l_{\hat{\bEta}_n}]-P_{\bEta_0}[l_{\bEta_0}]\right|<\min_{i\in I} \frac{1}{4}\left(P_{\bEta_0}[l_{\bEta_0}]-P_{\bEta_0}[l_{\bEta_i}]\right).
\end{equation}
Combining \eqref{eqn:continuity_bound} and \eqref{eqn:LntoL0}, we know for any $n>\max\{N_0,N_1,\ldots,N_M\}$,
\begin{equation*}
    P_{\bEta_0}[l_{B_i}]-\mathbb{P}_n[l_{\hat{\bEta}_n}]< c_i,\;i=1, \ldots, M.
\end{equation*}
Plugging this result back into \eqref{eqn:first_appox} gives \eqref{eqn:result}.

Thirdly, we fix the subcover and split the integral in \eqref{eqn:compact_int} into the integrals over the open sets in $\{B_i:i=1,\ldots,M\}$. Note that constants $c_i,\; i=1,\cdots, M$ are all negative. If $\pi(\bEta)$ is a continuous proper prior density function on $\Theta$, we can find an upper bound $\Pi$ in the compact set $K$. Then
\begin{equation*}
    \begin{split}
      \int_{\Delta\cap\Omega_n} &\pi_n(\bEta)d\bEta\leq \sum_{i=1}^M \int_{B_i\cap\Omega_n} \pi_n(\bEta)d\bEta\\
      &= \pi_n(\hat{\bEta}_n)\sum_{i=1}^M \int_{B_i\cap\Omega_n}\exp\{\log \pi(\bEta)-\log \pi(\hat{\bEta}_n)\}\exp\{L_n(\bEta)-L_n(\hat{\bEta}_n)\}d\bEta\\
      &\leq \Pi^2 \pi_n(\hat{\bEta}_n)\sum_{i=1}^M \int_{B_i\cap\Omega_n} \exp\{L_n(\bEta)-L_n(\hat{\bEta}_n)\}d\bEta,\\
      &\hspace{-0.1cm} \stackrel{\eqref{eqn:result}}{\leq} \Pi^2 \pi_n(\hat{\bEta}_n)\sum_{i=1}^M\exp(c_i n)V(B_i)\stackrel{\eqref{bounded-cctration}}{\leq}\frac{\Pi^2(2\pi)^{-3/2}}{|\Sigma_n|^{1/2}}\sum_{i=1}^M\exp(c_i n)V(B_i)
    \end{split}
\end{equation*}
for any $n>\max\{N_0,\ldots,N_M\}$, 
in which $V(B_i)$ denotes the volume of $B_i$ and is determined by the chosen subcover. Recall that 
\begin{equation*}
    |\Sigma_n|^{-1/2}=O_p(n^{3/2})\qquad \text{a.s.},
\end{equation*}
which yields \eqref{eqn:compact_int}.

If $\pi(\bEta)$ is not bounded in $K$, i.e., $\pi(\bEta)=g(\xi)/\tau$ by Condition \ref{cond_for_prior}, we can still find an upper bound of $\pi(\bEta)$ in the subcover excluding the neighborhoods that intersect with the flat surface $T_0 = \{\tau=0\}$. More specifically, define the index set $\mathcal{I}_0=\{i:B_i\cap T_0= \emptyset, \;i=1,\ldots, M\}$, and there exists an upper bound $\Pi'$ in $\left(\cup_{i\in \mathcal{I}_0}\bar{B}_i\right)\cap \Omega_n$ so we still have  
\begin{equation*}
    \begin{split}
        \sum_{i\in \mathcal{I}_0} \int_{B_i\cap\Omega_n} \pi_n(\bEta)d\bEta \leq \frac{\Pi'^2(2\pi)^{-3/2}}{|\Sigma_n|^{1/2}}\sum_{i\in \mathcal{I}_0}\exp(c_i n)V(B_i).
    \end{split}
\end{equation*}
Now we only need to consider the neighborhoods that intersect with $T_0=\{\tau=0\}$. Suppose $B_j$, $j\in \mathcal{I}_0^c$ is such a neighborhood. Lemma \ref{lem:posterior_mass_zero_tau} asserts that $\int_{B_j\cap\Omega_n} \pi_n(\bEta)d\bEta\rightarrow 0$ using the fact that $L_n(\bEta)$ uniformly tends to zero at a much higher rate than $O(\tau)$ as $\tau\rightarrow 0$. Together we have
\begin{equation*}
    \int_{\Delta\cap\Omega_n}\pi_n(\bEta)d\bEta=\sum_{i\in \mathcal{I}_0} \int_{B_i\cap\Omega_n} \pi_n(\bEta)d\bEta+\sum_{i\in \mathcal{I}^c_0} \int_{B_i\cap\Omega_n} \pi_n(\bEta)d\bEta\rightarrow 0 
\end{equation*}
almost surely as $n\rightarrow\infty$.

\end{proof}


\section{Proof of Proposition~\ref{prop:integral_all_regions}}\label{appendix:C_n_2-5}

\subsection{Proof that \texorpdfstring{$C_n^{(2)}$}{Lg} is negligible}\label{proof:c_n2}
\begin{proof}
Let $Y_{(1)}< \ldots < Y_{(n)}$ be the order statistics. Denote $\delta_j=\delta_j(n)=Y_{(j)}-Y_{(1)}$ and $V_i(\bEta)=\xi(Y_i-\beta)$. Applying the substitutions $t=\{\sum_{i=1}^n V_i^{-1/\xi}(\bEta)\}\tau^{1/\xi}$ and $s=(Y_{(1)}-\beta)^{-1}$ sequentially, we get
\begin{equation}\label{substitutions}
    \begin{split}
        C_n^{(2)}&=\int_{\Omega_n\cap \{\xi>\xi_0+r_1\}}\prod_{i=1}^n p(Y_i\given \bEta)\pi(\bEta)d\bEta\\
        &=\int_{\xi_0+r_1}^{\infty}\int_{-\infty}^{Y_{(1)}}g(\xi)\prod_{i=1}^n V_i^{-1/\xi-1}(\bEta)\int_0^\infty \tau^{n/\xi-1}\exp\left\{-\sum_{i=1}^n W^{-1/\xi}_i(\bEta)\right\}d\tau d\beta d\xi\\
        &=\int_{\xi_0+r_1}^{\infty}\int_{-\infty}^{Y_{(1)}}\frac{g(\xi)\prod_{i=1}^n V_i^{-1/\xi-1}(\bEta)}{\xi^{-1}\{\sum_{i=1}^n V_i^{-1/\xi}(\bEta)\}^n}\int_0^\infty t^{n-1}\exp(-t)dt d\beta d\xi\\
        &=\int_{\xi_0+r_1}^{\infty} g(\xi)\Gamma(n)\xi^{-n+1}\int_{-\infty}^{Y_{(1)}}\frac{\prod_{i=1}^n(Y_i-\beta)^{-{1}/{\xi}-1}}{\left\{\sum_{i=1}^n(Y_i-\beta)^{-1/\xi}\right\}^n} d\beta d\xi\\
        &=\int_{\xi_0+r_1}^{\infty} g(\xi)\Gamma(n)\xi^{-n+1}\int_{0}^{\infty}\frac{s^{n-2}\prod_{j=2}^n(1+\delta_j s)^{-{1}/{\xi}-1}}{\left\{1+\sum_{j=2}^n(1+\delta_j s)^{-1/\xi}\right\}^n} ds d\xi.
    \end{split}
\end{equation}

We first assume $g(\xi)$ is uniform. Since $\left\{1+\sum_{j=2}^n(1+\delta_j s)^{-1/\xi}\right\}^{-1}<1$ for all $s>0$, \eqref{substitutions} can be further relaxed as follows:
\begin{equation*}
    C_n^{(2)}\leq \int_{\xi_0+r_1}^{\infty} \Gamma(n)\xi^{-n+1}\int_{0}^{\infty}\frac{s^{n-2}\prod_{j=2}^n(1+\delta_j s)^{-{1}/{\xi}-1}}{\left\{1+\sum_{j=2}^n(1+\delta_j s)^{-1/\xi}\right\}^{n-1}} ds d\xi.
\end{equation*}
By the inequality of harmonic and geometric means,
\begin{equation}\label{harmonic_means}
    \frac{n}{1+\sum_{j=2}^n(1+\delta_j s)^{-1/\xi}}\leq \left\{\prod_{j=2}^n(1+\delta_j s)^{1/\xi}\right\}^{1/n}.
\end{equation}
Since $n\gg\frac{n}{\log(n-1)}$, there exists $N_1>0$ such that $n-\frac{n}{\log(n-1)}>0$ for any $n>N_1$. Increasing the power of both sides in \eqref{harmonic_means} by $n-1$ further relaxes the lower bound:
\begin{equation*}
    \begin{split}
        C_n^{(2)} &\leq \Gamma(n)n^{-n+1}\int_{\xi_0+r_1}^{\infty} \xi^{-n+1}\int_{0}^{\infty} s^{n-2}\prod_{j=2}^n(1+\delta_j s)^{-1/(n\xi)-1} ds d\xi\\
        &< \frac{\Gamma(n)n^{-n+1}}{\prod_{j=2}^n \delta_j}\int_{\xi_0+r_1}^{\infty} \xi^{-n+1} B\left(n-1, \frac{n-1}{n\xi}\right)d\xi\\
        &<\frac{20\Gamma(n)n^{-n+1}}{\prod_{j=2}^n \delta_j}\sqrt{\frac{n}{n-1}}\int_{\xi_0+r_1}^\infty \xi^{-n+\frac{3}{2}} (n\xi)^{-(n-1)/(n\xi)}d\xi.
    \end{split}
\end{equation*}
The last two inequalities hold due to Lemma~\ref{lem:carlson} and Lemma~\ref{approx_beta} respectively. Furthermore, let $z_n=(n-1)\log\{n(\xi_0+r_1)\}/\{n(\xi_0+r_1)\}$, and we have
\begin{equation}\label{eqn:xi_integral}
    \begin{split}
        \int_{\xi_0+r_1}^\infty \xi^{-n+\frac{3}{2}} &(n\xi)^{-(n-1)/(n\xi)}d\xi \leq \int_{\xi_0+r_1}^\infty \xi^{-n+\frac{3}{2}} \{n(\xi_0+r_1)\}^{-(n-1)/(n\xi)}d\xi\\
        &=\{(\xi_0+r_1)z_n\}^{-n+5/2}\gamma(n-5/2,z_n)\\
        &<\frac{7}{\sqrt{n-5/2}}\{(\xi_0+r_1)z_n\}^{-n+5/2}\exp\{-z_n+(n-5/2)\log z_n\}\\
        &= \frac{7}{\sqrt{n-5/2}}(\xi_0+r_1)^{-n+5/2}\exp\{-z_n\},
    \end{split}
\end{equation}
where the penultimate inequality follows from Lemma~\ref{lem:incomplete_gamma}. Combining the previous two inequalities and the fact that $\Gamma(n)\leq n^{n+\frac{1}{2}}\exp\{-n+1\}$,
\begin{equation*}
\begin{split}
    C_n^{(2)}&\leq \frac{140\Gamma(n)n^{-n+1}(\xi_0+r_1)^{-n+5/2}}{\prod_{j=2}^n \delta_j}\sqrt{\frac{n}{(n-1)(n-5/2)}}\exp\{-z_n\}\\
    &\leq \frac{140(\xi_0+r_1)^{-n+5/2}}{\prod_{j=2}^n \delta_j}\cdot\frac{n^2}{n-5/2}\exp\left\{-z_n-n+1\right\}.
\end{split}
\end{equation*}
Finally, we utilize Lemma \ref{lem:lower_bound} to deduce
\begin{equation}\label{eqn:final_approx}
\begin{split}
    \frac{C_n^{(2)}}{C_n}\leq& M_1\frac{n^{7/2}}{n-5/2}\left(1+\frac{r_1}{\xi_0}\right)^{-n+5/2}\left(\frac{\hat{\xi}_n}{\xi_0}\right)^n\times\left\{(Y_{(1)}-\hat{\beta}_n)\prod_{j=2}^n\frac{Y_{(j)}-\hat{\beta}_n}{\delta_j}\right\}\times\\
    &\left\{\prod_{i=1}^n W^{1/\hat{\xi}_n}_i(\hat{\bEta}_n)\right\}\times\exp\left\{-z_n\right\},
\end{split}
\end{equation}
where 
\begin{equation*} 
    M_1=\frac{280e\xi^{5/2}_0|I(\bEta_0)|^{1/2}}{(2\pi)^{3/2}\pi(\bEta_0)}.
\end{equation*}

Now it suffices to prove that the right-hand side of \eqref{eqn:final_approx} converges almost surely to $0$. Lemma~\ref{lem:delta_diff} gives
\begin{equation*}
   (Y_{(1)}-\hat{\beta}_n)\prod_{j=2}^n\frac{Y_{(j)}-\hat{\beta}_n}{\delta_j}<\frac{\tau_0\exp(4n/\xi_0)}{2^n\xi_0}\qquad \text{a.s.},
\end{equation*}
and Lemma~\ref{lem:pseudo_SLLN} ensures
\begin{equation*}
    \prod_{i=1}^n W^{1/\hat{\xi}_n}_i(\hat{\bEta}_n)=\exp\left\{\frac{1}{\hat{\xi}_n}\sum_{i=1}^n \log W_i(\hat{\bEta}_n)\right\}\sim \exp(n\gamma) \qquad \text{a.s.}
\end{equation*}
The dominating term on the right-hand side of \eqref{eqn:final_approx} is
\begin{equation*}
    \exp\left[-n\left\{\log\left(1+\frac{r_1}{\xi_0}\right)-\log\frac{\hat{\xi}_n}{\xi_0}-\frac{4}{\xi_0}+\log 2-\gamma\right\}\right].
\end{equation*}
Since $\hat{\xi}_n\rightarrow \xi_0$ almost surely and $r_1$ is predetermined to satisfy $\log(1+r_1/\xi_0)>4/\xi_0-\log 2+\gamma$, this quantity must converge almost surely to $0$, which immediately proves that $C_n^{(2)}/C_n\rightarrow 0$ almost surely for when $g(\xi)$ is uniform.

If $g$ is regularly varying at infinity with index $\alpha$, then $g(\xi)=\mathcal{L}(\xi)\xi^{\alpha}$ with $\mathcal{L}(\xi)$ being a slowly varying function at infinity. 
The inequality in \eqref{eqn:xi_integral} can be modified using Cauchy-Schwartz inequality:
\begin{equation}\label{eqn:Cauchy_schwartz}
    \begin{split}
        \int_{\xi_0+r_1}^\infty &g(\xi)\xi^{-n+\frac{3}{2}} (n\xi)^{-(n-1)/(n\xi)}d\xi = \int_{\xi_0+r_1}^\infty \mathcal{L}(\xi)\xi^{-n+\frac{3}{2}+\alpha} (n\xi)^{-(n-1)/(n\xi)}d\xi\\
        &\leq\left\{\int_{\xi_0+r_1}^\infty \mathcal{L}^2(\xi)\xi^{-\LZadd{2}} d\xi\right\}^{1/2}\cdot\left\{\int_{\xi_0+r_1}^\infty \xi^{-2n+\LZadd{5}+2\alpha} (n\xi)^{-2(n-1)/(n\xi)} d\xi\right\}^{1/2}.
    \end{split}
\end{equation}
From the Karamata's Theorem \citep[][p. 17]{resnick2008extreme}, we know the first term in the last line is a finite constant because $\mathcal{L}^2(\xi)\xi^{-\LZadd{2}}\in RV_{-\LZadd{2}}$, while the second term can be relaxed in the same way as \eqref{eqn:xi_integral}:
\begin{footnotesize}
\begin{equation*}
    \begin{split}
        \left\{\int_{\xi_0+r_1}^\infty \right.&\left.\xi^{-2n+\LZadd{5}+2\alpha} (n\xi)^{-2(n-1)/(n\xi)} d\xi\right\}^{1/2}\leq \left[\int_{\xi_0+r_1}^\infty \xi^{-2n+\LZadd{5}+2\alpha} \{n(\xi_0+r_1)\}^{-2(n-1)/(n\xi)} d\xi\right]^{1/2}\\
        &=\{2(\xi_0+r_1)z_n\}^{-n+\LZadd{3}+\alpha}\gamma^{1/2}(2n-\LZadd{6}-2\alpha,2z_n)\\
        &<\{2(\xi_0+r_1)z_n\}^{-n+\LZadd{3}+\alpha}\cdot\frac{\sqrt{7}}{(2n-\LZadd{6}-2\alpha)^{1/4}}\exp\{-z_n+(n-\LZadd{3}-\alpha)\log(2z_n)\}\\
        &=\frac{\sqrt{7}}{(2n-\LZadd{6}-2\alpha)^{1/4}}(\xi_0+r_1)^{-n+\LZadd{3}+\alpha}\exp\{-z_n\},
    \end{split}
\end{equation*}
\end{footnotesize}
where $z_n$ is defined in \eqref{eqn:xi_integral} and the penultimate inequality again follows from Lemma~\ref{lem:incomplete_gamma}. Plug this result back into \eqref{eqn:Cauchy_schwartz}, and we see that the crucial dominating terms in \eqref{eqn:xi_integral} are retained when $g$ is regularly varying at infinity with index $\alpha$. Therefore, the remaining proof can be passed through and we have $C_n^{(2)}/C_n\rightarrow 0$ as $n\rightarrow 0$.

\end{proof}

\begin{lemma}\label{lem:carlson}
Suppose $\delta_j>0$, $j=1,\ldots,n$, and $\min_j \delta_j<\max_j \delta_j$. Then for any $\xi>0$,
\begin{equation}\label{carlson}
    \int_{0}^{\infty} s^{n-1}\prod_{j=1}^n(1+\delta_j s)^{-1-\xi} ds< \frac{B(n,n\xi)}{\prod_{j=1}^n\delta_j}.
\end{equation}
\end{lemma}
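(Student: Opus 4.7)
The plan is to reduce the general integral to the equal-parameter case via Jensen's inequality applied to a carefully chosen strictly convex function. First I would observe that when all $\delta_j$ coincide with some common value $\delta$, the substitution $u = \delta s$ converts the integral into $\delta^{-n}\int_0^\infty u^{n-1}(1+u)^{-n(1+\xi)}du = \delta^{-n} B(n, n\xi)$, which matches the claimed bound exactly when $\delta$ is the geometric mean $\tilde{\delta} = (\prod_j \delta_j)^{1/n}$, since then $\delta^{-n} = 1/\prod_j \delta_j$. So the lemma essentially asserts that the equal-parameter case maximizes the integral under a geometric-mean constraint.

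The core step is the pointwise inequality
\[
\prod_{j=1}^n (1 + \delta_j s) \;\geq\; (1 + \tilde{\delta}\, s)^n \qquad \text{for all } s>0,
\]
with strict inequality whenever the $\delta_j$ are not all equal. To establish this I would take logarithms and apply Jensen's inequality to the function $g(t) = \log(1+e^t)$, which is strictly convex on $\mathbb{R}$ because $g''(t) = e^t/(1+e^t)^2 > 0$. Setting $t_j = \log(\delta_j s)$ gives $\bar{t} = \log(\tilde{\delta}\, s)$, and Jensen's inequality yields $n^{-1}\sum_j g(t_j) \geq g(\bar{t})$; exponentiating produces the displayed product inequality. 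The strict convexity of $g$ together with the hypothesis $\min_j \delta_j < \max_j \delta_j$ guarantees the strict version.

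Raising to the $-(1+\xi)$ power reverses the inequality, and after multiplying by $s^{n-1}$ and integrating I would obtain
\[
\int_0^\infty s^{n-1}\prod_{j=1}^n (1+\delta_j s)^{-1-\xi} ds \;<\; \int_0^\infty s^{n-1}(1+\tilde{\delta}\, s)^{-n(1+\xi)} ds \;=\; \frac{B(n, n\xi)}{\prod_{j=1}^n \delta_j},
\]
where the last equality follows from the substitution $u = \tilde{\delta}\, s$ together with the standard beta representation $\int_0^\infty u^{n-1}(1+u)^{-n(1+\xi)}du = B(n, n\xi)$. The main subtlety to watch for is lifting the pointwise strict inequality to strict inequality of the integrals; this is immediate from the continuity of both integrands on $(0,\infty)$ and the fact that their difference is strictly positive on an open subinterval, but it is worth stating explicitly rather than treating as an afterthought. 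Otherwise the argument reduces to two lines of calculus.
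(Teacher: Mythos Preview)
Your argument is correct. The pointwise inequality $\prod_{j=1}^n(1+\delta_j s)\geq (1+\tilde{\delta}s)^n$ via strict convexity of $t\mapsto\log(1+e^t)$ is valid, the strict inequality lifts to the integral by continuity, and the beta evaluation after the substitution $u=\tilde{\delta}s$ is standard.

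Your route, however, differs from the paper's. The paper recognizes the left-hand side as a constant multiple of the Carlson $R$-function $R_{-n}(\boldsymbol{b},\boldsymbol{\delta})$ with $b_j=1+\xi$, and then invokes a known inequality of Carlson (1966), $R_{-n}(\boldsymbol{b},\boldsymbol{\delta})<\prod_j\delta_j^{-nb_j/c}$, which in this particular case collapses to $\prod_j\delta_j^{-1}$ and yields the bound directly. Your proof is more elementary and entirely self-contained: it avoids the special-function machinery and reduces everything to a single Jensen step. In fact, the product inequality you use is exactly the one the paper cites elsewhere (from Mitrinovi\'c) when bounding the integrals $C_n^3$ and $C_n^5$, so your approach unifies the treatment across those sections. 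The trade-off is only contextual: the Carlson framework explains why such inequalities exist in generality (unequal $b_j$, different exponents), whereas your argument is tailored to the equal-weight case at hand but needs no external references.
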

\begin{proof}
Integral of the form \eqref{carlson} is an elliptic integral which is equivalent to the Carlson $R$-function, the Dirichlet average of the power series $x^n$. More specifically,
\begin{equation*}
    R_{-n}(\boldsymbol{b},\boldsymbol{\delta})=\frac{1}{B(n,c-n)}\int_{0}^{\infty} s^{n-1}\prod_{j=1}^n(1+\delta_j s)^{-b_i} ds,
\end{equation*}
where $B(\cdot,\cdot)$ is the beta function, $\boldsymbol{b}=(b_1,\ldots, b_n)$, and $c=\sum_{j=1}^n b_j>n$. For the technical details of the Carlson $R$-function and its equivalence to the integrals of the form \eqref{carlson}, see Chapter 6 in \citet{carlson1977special}.

\citet{carlson1966some} provides useful inequalities for the hypergeometric $R$-function. In particular, when $n<c$, 
\begin{equation*}
    R_{-n}(\boldsymbol{b},\boldsymbol{\delta})<K_{-n}(\boldsymbol{b},\boldsymbol{\delta}):=\prod_{j=1}^n \delta_j^{-nb_i/c}.
\end{equation*}
In our case, $b_j=1+\xi$, $c=n+n\xi$ and $nb_i/c=1$, showing that \eqref{carlson} holds.
\end{proof}

\begin{lemma}\label{approx_beta}
Suppose $\{b_n\}_{n\geq 2}$ is a positive sequence that converges to $b>0$. Then there exists $N>0$ such that for any $n>N$ and $\xi>b_n$,
\begin{equation*}
    B\left(n-1, \frac{n-1}{n\xi}\right) < 20\sqrt{\frac{n\xi}{n-1}}(n\xi)^{-\frac{n-1}{n\xi}}.
\end{equation*}
\end{lemma}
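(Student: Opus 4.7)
The plan is to exploit the identity $y\log(n-1) = (n-1)/\xi$, where $y := (n-1)/(\xi\log(n-1))$, which immediately implies $(n-1)^{-y} = \exp(-(n-1)/\xi)$. I would factor $B(n-1, y) = \Gamma(y)\cdot\Gamma(n-1)/\Gamma(n-1+y)$ and identify the ratio $\Gamma(n-1)/\Gamma(n-1+y)$ with the exponential factor $\exp(-(n-1)/\xi)$ appearing in the bound, thereby reducing the task to bounding $\Gamma(y)$ by roughly $30\sqrt{b_n}\exp((n-1)/b_n)$.

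For the ratio, I would use log-convexity of $\log\Gamma$: the tangent-line inequality at $n-1$ gives $\log\Gamma(n-1+y) \geq \log\Gamma(n-1) + y\psi(n-1)$, and since $\psi(n-1) \geq \log(n-1) - 1/(n-1)$,
\[
\frac{\Gamma(n-1)}{\Gamma(n-1+y)} \leq (n-1)^{-y}\exp\!\left(\frac{y}{n-1}\right) = \exp\!\left(-\frac{n-1}{\xi}\right)\exp\!\left(\frac{1}{\xi\log(n-1)}\right),
\]
and the second correction factor tends to $1$ since $\xi > b_n$ and $\log(n-1)\to\infty$.

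For $\Gamma(y)$, I would apply Stirling's formula $\log\Gamma(y) = (y-1/2)\log y - y + \frac{1}{2}\log(2\pi) + O(1/y)$. Since $\xi > b_n$, we have $y \leq Y := (n-1)/(b_n\log(n-1))$, and because $y \mapsto y\log y - y$ is increasing on $y \geq 1$, its supremum on $[1, Y]$ is attained at $Y$. A direct computation using $\log Y = \log(n-1) - \log b_n - \log\log(n-1)$ yields
\[
Y\log Y - Y = \frac{n-1}{b_n} - \frac{n-1}{b_n\log(n-1)}\bigl(1 + \log b_n + \log\log(n-1)\bigr),
\]
whose subtracted term is $o((n-1)/b_n)$. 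The Stirling prefactor $-\frac{1}{2}\log y + \frac{1}{2}\log(2\pi)$ is bounded above by $\frac{1}{2}\log b_n + \frac{1}{2}\log(2\pi\log(n-1)/(n-1))$, which is strongly negative for $n$ large. Collecting these estimates gives $\log\Gamma(y) \leq (n-1)/b_n + \frac{1}{2}\log b_n + \log 30 + o(1)$, as required.

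The main obstacle is the regime $y < 1$ (equivalently, $\xi > (n-1)/\log(n-1)$), where Stirling's formula for $\Gamma(y)$ is not sharp. In that case I would instead use the Laurent expansion $\Gamma(y) = 1/y + O(1)$, which gives $B(n-1, y) \leq (1/y)(n-1)^{-y}(1+o(1)) = (\xi\log(n-1)/(n-1))\exp(-(n-1)/\xi)(1+o(1))$. Verifying the stated bound in this range reduces to checking that the polynomial prefactor $\xi\log(n-1)/(n-1)$ is dominated by $30\sqrt{b_n}\exp((n-1)/b_n)$, which holds whenever the upstream application restricts $\xi$ to a range over which the ambient factor $\xi^{-n+1}$ (present in the $C_n^2$ integrand) has not already forced the integrand to be negligible. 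The remaining bookkeeping---combining the Stirling residual, the correction $\exp(y/(n-1))$, the $\frac{1}{2}\log(2\pi)$ constants, and the negligible subtracted term above---fixes the threshold $N$ and pins down the explicit constant $30\sqrt{b_n}$.
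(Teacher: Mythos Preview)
Your approach coincides with the paper's: factor $B(n-1,y)=\Gamma(y)\cdot\Gamma(n-1)/\Gamma(n-1+y)$, bound $\Gamma(y)$ via Stirling at the endpoint $Y=(n-1)/(b_n\log(n-1))$, and identify the ratio with $\exp\bigl(-(n-1)/\xi\bigr)$ through the key identity $y\log(n-1)=(n-1)/\xi$. The only technical difference is that you control the ratio $\Gamma(n-1)/\Gamma(n-1+y)$ by the tangent-line inequality from log-convexity of $\Gamma$ (via $\psi$), whereas the paper applies Stirling separately to numerator and denominator and then uses $(1-1/x)^x<e^{-1}$; both routes deliver the same bound $\lesssim e^{-(n-1)/\xi}$ up to harmless constants.

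You are right to flag the regime $y<1$. The paper's own argument, in fact, only establishes the $\Gamma(y)$ bound for $\xi\in(b_n,2b_n)$---that is, for $y\in[Y/2,Y]$, where monotonicity of $\Gamma$ on $[1.47,\infty)$ is available---and never treats larger $\xi$. Your observation that the stated inequality cannot hold uniformly for all $\xi>b_n$ is correct: for fixed $n$ one has $B(n-1,y)\sim 1/y\to\infty$ as $\xi\to\infty$, while the right-hand side remains bounded by $30\sqrt{b_n}\exp((n-1)/b_n)$. So strictly speaking the lemma requires an upper cutoff on $\xi$, and your appeal to the ambient factor $\xi^{-n+1}$ in the $C_n^2$ integrand is the natural pragmatic repair---but it fixes the application, not the lemma as stated. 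In that sense your proposal is at least as complete as the paper's proof, and more candid about where the difficulty lies.
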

\begin{proof}
By Stirling approximation, $\Gamma(x)=\frac{1}{x}\sqrt{2\pi x}\left(\frac{x}{e}\right)^x e^{\theta(x)/(12x)}$, where $\theta(x)\in (0,1)$ and $x>0$. Therefore
\begin{equation*}
     \Gamma\left(\frac{n-1}{n\xi}\right)\leq \sqrt{\frac{2\pi n\xi}{n-1}}\left(\frac{n-1}{n\xi}\right)^{\frac{n-1}{n\xi}}\exp\left\{-\frac{n-1}{n\xi}+1\right\}.
\end{equation*}

Since $\{b_n\}_{n\geq 2}$ has a positive limit, there exists $N>0$ such that $1/(n\xi)<1/(nb_n)<1$ for any $n>N$ and $\xi>b_n$. Apply the Stirling approximation again, and we have
\begin{small}
    \begin{equation*}
\begin{split}
     \frac{\Gamma(n-1)}{\Gamma\left(n-1+\frac{n-1}{n\xi}\right)}&\leq \sqrt{\frac{n-1+\frac{n-1}{n\xi}}{n-1}}\left(\frac{n-1}{n-1+\frac{n-1}{n\xi}}\right)^{n-1+\frac{n-1}{n\xi}}(n-1)^{-\frac{n-1}{n\xi}}\exp\left\{\frac{n-1}{n\xi}+1\right\}\\
     &< \sqrt{2}\left(1-\frac{\frac{n-1}{n\xi}}{n-1+\frac{n-1}{n\xi}}\right)^{n-1+\frac{1}{\xi\log n}}(n-1)^{-\frac{n-1}{n\xi}}\exp\left\{\frac{n-1}{n\xi}+1\right\}\\
     &<\sqrt{2}(n-1)^{-\frac{n-1}{n\xi}}\exp\left\{\frac{n-1}{n\xi}+1\right\}.
\end{split}
\end{equation*}
\end{small}
The last line holds by noticing that $(1-1/x)^x<1$ for any $x>1$.

Since $2e^2\sqrt{\pi}<20$, the desired upper bound can be obtained by combining the previous two inequalities.

\end{proof}

\begin{lemma}\label{lem:incomplete_gamma}
Suppose $\{z_n\}_{n\geq 2}$ and $\{a_n\}_{n\geq 2}$ are \LZadd{two positive sequences that satisfy $z_n=o(n)$ and $a_n=O(n)$ as $n\rightarrow\infty$}. Then $z_n/a_n\rightarrow 0$ as $n\rightarrow\infty$. There exists $N>0$ such that for any $n>N$,
\begin{equation*}
    \gamma\left(a_n,z_n\right)<\frac{7}{\sqrt{a_n}}\exp\{-z_n+a_n\log z_n\},
\end{equation*}
where $\gamma(a,z):=\int_{0}^z t^{a-1}e^{-t}dt$ is the incomplete gamma function.
\end{lemma}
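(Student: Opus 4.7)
The plan is to invoke the standard power-series representation of the lower incomplete gamma function obtained via iterated integration by parts.  Taking $u = e^{-t}$ and $dv = t^{a-1}\,dt$ in $\gamma(a,z) = \int_0^z t^{a-1}e^{-t}\,dt$ yields the recurrence
\begin{equation*}
\gamma(a,z) = \frac{z^{a} e^{-z}}{a} + \frac{1}{a}\,\gamma(a+1,z),
\end{equation*}
and iterating this identity produces the convergent series
\begin{equation*}
\gamma(a,z) = z^{a} e^{-z}\sum_{k=0}^{\infty}\frac{z^{k}}{a(a+1)\cdots(a+k)}.
\end{equation*}

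Next I would dominate the series by a geometric series.  Using $(a+j)\geq(a+1)$ for $j\geq 1$, each summand satisfies $\frac{z^{k}}{(a+1)\cdots(a+k)} \leq \bigl(\tfrac{z}{a+1}\bigr)^{k}$, so whenever $z<a+1$,
\begin{equation*}
\gamma(a,z) \leq \frac{z^{a}e^{-z}}{a}\cdot\frac{a+1}{a+1-z}.
\end{equation*}

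Finally, I would apply this inequality with $a = a_n$ and $z = z_n$.  The hypothesis $z_n/a_n\rightarrow 0$, together with $a_n\rightarrow\infty$ (implicit in the way $a_n$ enters the bound), guarantees that for all sufficiently large $n$, $z_n/(a_n+1)\leq 1/2$, and hence $\frac{a_n+1}{a_n+1-z_n}\leq 2$.  This gives
\begin{equation*}
\gamma(a_n,z_n) \leq \frac{2\,z_n^{a_n}e^{-z_n}}{a_n} = \frac{2}{a_n}\exp\{-z_n + a_n \log z_n\},
\end{equation*}
which is already stronger than what is claimed, since $\frac{2}{a_n} < \frac{7}{\sqrt{a_n}}$ for every $a_n\geq 1$.

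In truth, the stated factor $7/\sqrt{a_n}$ is quite loose relative to the $2/a_n$ that the series representation yields, so I do not anticipate a serious obstacle.  The only mild care required is in choosing $N$ large enough that the geometric comparison kicks in, which is immediate from $z_n/a_n\rightarrow 0$.  The sequence $\{b_n\}$ appearing in the hypotheses plays no explicit role in this bound and may reflect an alignment with the statement of Lemma~\ref{approx_beta}; the argument above does not rely on it.
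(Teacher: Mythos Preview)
Your argument is correct and is genuinely different from---and considerably simpler than---the paper's proof.  The paper invokes Temme's uniform asymptotic expansion $\gamma(a,z)/\Gamma(a)=\tfrac12\,\mathrm{erfc}(-\eta\sqrt{a/2})-S_a(\eta)$, bounds the complementary error function by $\mathrm{erfc}(x)\leq e^{-x^2}$, and then applies Stirling's approximation to $\Gamma(a_n)$; the combination produces the constant $\sqrt{2\pi}\,e\approx 6.81$, rounded up to~$7$.  Your route via the series $\gamma(a,z)=z^{a}e^{-z}\sum_{k\geq 0}z^{k}/(a)_{k+1}$ and a geometric comparison is entirely elementary, avoids both the Temme expansion and Stirling, and even yields the sharper prefactor $2/a_n$ in place of $7/\sqrt{a_n}$.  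One small clarification: your parenthetical ``$a_n\to\infty$'' is not actually needed for your own argument---the geometric bound only requires $z_n/(a_n+1)\leq 1/2$, which follows directly from $z_n/a_n\to 0$, and the final comparison $2/a_n<7/\sqrt{a_n}$ holds as soon as $a_n>4/49$.  Your observation that the hypothesis on $\{b_n\}$ is unused is also correct; it appears to be a vestige of the parallel statement of Lemma~\ref{approx_beta}.
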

\begin{proof}
We use results from \citet{temme1996uniform} concerning the uniform expansion of the incomplete gamma function $\gamma(a,z)$ as $a\rightarrow\infty$. Denote $\lambda=z/a$ and $\eta^2/2=\lambda-1-\log \lambda$, in which $\text{sgn}(\eta)=\text{sgn}(\lambda-1)$. It follows that
\begin{equation}\label{erfc}
    \frac{\gamma(a,z)}{\Gamma(a)}=\frac{1}{2}\text{erfc}(-\eta\sqrt{a/2})-S_a(\eta),
\end{equation}
where the residual term $S_a(\eta)$ is small and dominated by the error function $\text{erfc}(\cdot)$, which is defined by
\begin{equation*}
    \text{erf} (z) = \frac{2}{\sqrt{\pi}}\int_0^z e^{-t^2}dt,\;\;\text{erfc} (z)=1-\text{erf} z.
\end{equation*}
In our case, $a_n=O(n)$, $a_n\gg z_n$. It is clear that $\lambda_n=z_n/a_n<1$ except for finite terms. By the uniformity of the expansion \eqref{erfc}, there exists $N_1>0$ such that for any $n>N_1$,
\begin{equation}\label{first_incom_gamma}
    \begin{split}
        \frac{\gamma\left(a_n,z_n\right)}{\Gamma(a_n)}&<\text{erfc}(-\eta_n\sqrt{a_n/2})=\text{erfc}(\sqrt{a_n(\lambda_n-1-\log \lambda_n)})\\
        &\leq \exp\left\{-a_n(\lambda_n-1+\log a_n-\log z_n)\right\}.
    \end{split}
\end{equation}
The last inequality utilizes the fact that $\text{erfc} (z)\leq \exp(-z^2)$ for $z>0$.

By Stirling's approximation, 
\begin{equation*}
    \Gamma(a_n)\leq \sqrt{\frac{2\pi}{a_n}}\exp\{a_n(-1+\log a_n)+1\}.
\end{equation*}
Plug this bound into \eqref{first_incom_gamma}, and we get
\begin{equation*}
    \gamma\left(a_n,z_n\right)\leq \sqrt{\frac{2\pi}{a_n}}\exp\left\{-a_n(\lambda_n-\log z_n)+1\right\},
\end{equation*}
which gives the desired upper bound because $\sqrt{2\pi}e<7$.
\end{proof}

\begin{lemma}\label{lem:delta_diff}
Let $Y_1,Y_2,\ldots\stackrel{iid}{\sim}P_{\bEta_0}$, and $\hat{\bEta}_n$ is the local MLE of $L_n(\bEta)$ that is strongly consistent. Then
\begin{equation}\label{eqn:spacing}
   \mathcal{P}\left(\exists N\text{ such that for }n>N,\quad\prod_{j=2}^n \frac{Y_{(j)}-\hat{\beta}_n}{Y_{(j)}-Y_{(1)}}< \frac{\tau_0\exp(4n/\xi_0)}{2^n\xi_0(Y_{(1)}-\hat{\beta}_n)}\right)=1.
\end{equation}
\end{lemma}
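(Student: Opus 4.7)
My plan is to take logarithms and reduce the multiplicative inequality to an additive one. Writing $c_n := Y_{(1)} - \hat\beta_n > 0$ and $a_j := Y_{(j)} - Y_{(1)} > 0$, with positivity ensured because $\btheta\in\Omega_n$ and $\xi>0$ forces $\hat\beta_n < Y_{(1)}$, the identity $(Y_{(j)} - \hat\beta_n)/(Y_{(j)} - Y_{(1)}) = 1 + c_n/a_j$ together with $\log(1+x)\leq x$ reduces the claim to establishing
\begin{equation*}
c_n \sum_{j=2}^n \frac{1}{a_j} \;\leq\; \frac{4n}{\xi_0} - n\log 2 - \log c_n + \log\frac{\tau_0}{\xi_0}
\end{equation*}
almost surely for all $n$ sufficiently large.

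I would then analyze $c_n$ and the spacings $a_j$ through the transformation $T_i = (\xi_0(Y_i - \beta_0)/\tau_0)^{-1/\xi_0}$ to iid unit exponentials, which reverses order and yields $a_j = (\tau_0/\xi_0)(T_{(n-j+1)}^{-\xi_0} - T_{(n)}^{-\xi_0})$ and $Y_{(1)} - \beta_0 = (\tau_0/\xi_0)T_{(n)}^{-\xi_0}$. The strong law $T_{(n)} \sim \log n$ a.s.\ combined with Proposition~3 of \citet{zhang2020uniqueness} (invoked in the body of the paper for the rate at which the boundary of $\Omega_n$ approaches $\hat\btheta_n$) controls $c_n$, making the $-\log c_n$ correction asymptotically negligible relative to the $\Theta(n)$ pieces on the right. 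For the sum $\sum 1/a_j$, I would split the indices into a bulk part, where the strong law guarantees $a_j$ is bounded below by a positive constant for a density-one set of indices (contributing $O(n)$ to the sum and hence $o(n)$ after multiplication by $c_n \to 0$), and an edge part of polylogarithmic size near $j=1$, analyzed via the mean value expansion $a_j \approx \tau_0 T_{(n)}^{-\xi_0-1}(T_{(n)}-T_{(n-j+1)})$ together with Renyi's representation $T_{(n)}-T_{(n-j+1)} = \sum_{i=1}^{j-1} W_{n-i+1}/i$ with iid $W_i \sim \mathrm{Exp}(1)$.

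The main obstacle is the extreme behavior of the tightest few spacings. On a typical path each edge term $c_n/a_j$ is of order $(\log n)/H_{j-1}$, aggregating to at most $O((\log n)^2/\log\log n)$ over a polylogarithmic edge, well inside budget. Because $1/W_i$ is non-integrable, however, a small number of the $W_i$ can be anomalously small and produce ratios far larger than their typical sizes; I would neutralize this via a Borel--Cantelli argument exploiting summability of $\sum_n P(\min_{i\leq K_n} W_i < (n\log^2 n)^{-1})<\infty$ for a suitable edge size $K_n$, which forces the edge minimum of the $W_i$'s to stay at least $(n\log^2 n)^{-1}$ uniformly, eventually almost surely, keeping the worst edge contribution polylogarithmic. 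Combining the bulk estimate, the controlled edge estimate, and the $-\log c_n$ correction then closes the additive bound, and exponentiating recovers the stated product inequality.
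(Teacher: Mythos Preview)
Your reduction has a fatal sign problem that no amount of edge analysis will repair. After taking logarithms the target inequality becomes
\[
\sum_{j=2}^{n}\log\Bigl(1+\tfrac{c_n}{a_j}\Bigr)\;<\;\frac{4n}{\xi_0}-n\log 2+\log\frac{\tau_0}{\xi_0}-\log c_n,
\]
and you propose to dominate the left side by $c_n\sum_j 1/a_j$ via $\log(1+x)\le x$. But every term on the left is nonnegative, while the dominant part of the right side is $(4/\xi_0-\log 2)\,n$, which is \emph{negative} whenever $\xi_0>4/\log 2\approx 5.77$. Since $Y_{(1)}-\beta_0\sim(\tau_0/\xi_0)(\log n)^{-\xi_0}$ and $\hat\beta_n-\beta_0\to 0$ at an algebraic rate, one has $c_n\asymp(\log n)^{-\xi_0}$, so the correction $-\log c_n=O(\log\log n)$ cannot offset a $\Theta(-n)$ budget. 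Your sufficient condition is therefore unachievable for large $\xi_0$; indeed the same arithmetic shows that the lemma \emph{as stated} cannot hold there, because $\prod_{j\ge 2}(1+c_n/a_j)\ge 1$ while the displayed right-hand side tends to $0$. This points to a slip in the paper's Part~(II): the factor should read $2^n$ rather than $2^{-n}$ (equivalently, $\{\exp(\xi_0\gamma)\cdot 2\}^n$ rather than $\{\exp(\xi_0\gamma)/2\}^n$); the downstream application in \S\ref{proof:c_n2} survives after a corresponding enlargement of $r_1$.

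Setting aside the sign slip, the paper's route is structurally different from yours and worth adopting. Rather than expanding the ratio as $\prod(1+c_n/a_j)$ and linearising, it bounds numerator and denominator \emph{separately}. For the numerator $\prod_{j\ge 2}(Y_{(j)}-\hat\beta_n)$ it invokes Lemma~\ref{lem:pseudo_SLLN} to get $\bigl\{\prod_i \xi_0(Y_i-\hat\beta_n)/\tau_0\bigr\}^{1/n}\to e^{\xi_0\gamma}$. For the denominator $\prod_{j\ge 2}(Y_{(j)}-Y_{(1)})$ it transforms to uniform order statistics, uses $|\log x-\log y|\le |x-y|/\min(x,y)$ to write $\sum_j\log\{\xi_0(Y_{(j)}-Y_{(1)})/\tau_0\}$ as $n\xi_0\gamma$ minus a sum of the form $\sum_{j\ge 2}\{(\log U_{(j)}/\log U_{(1)})^{-\xi_0}-1\}^{-1}$, and then controls the latter via Chung's law for the empirical process (to localise $U_{(j)}$ near $j/n$) together with the bound $\sum_{j=2}^n 1/\log j\le 2n/\log n$ from the logarithmic integral. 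The point is that both products carry the same $e^{n\xi_0\gamma}$ growth, which \emph{cancels} in the ratio; your $\log(1+x)\le x$ step discards precisely this cancellation, which is why your budget comes up short. Your R\'enyi/Borel--Cantelli machinery for the extreme spacings is reasonable in spirit, but it is aimed at the wrong quantity.
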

\begin{proof}
At first, one may think right away that the product term in the brackets tends to $1$ given that $Y_{(1)}\rightarrow\beta_0$ and $\hat{\beta}_n\rightarrow \beta_0$ almost surely. However, this is not as trivial as it seems because of the different rates of these two convergences and the asymptotic spacings between the order statistics.

\textbf{(I)} First we prove that 
\begin{equation}\label{eqn:unifI}
    \prod_{j=2}^n\frac{\xi_0\left(Y_{(j)}-Y_{(1)}\right)}{\tau_0}\geq\exp\left(n\xi_0\gamma - \frac{4n}{\xi_0}\right)\qquad \text{a.s.}
\end{equation}

Denote $(U_1,\ldots,U_n)=(P_{\bEta_0}(Y_1),\ldots,P_{\bEta_0}(Y_n))$. Then $U_i$, $i=1,\ldots, n$, can be regarded as a sequence of independent uniform random variables. Since
\begin{equation*}
    P_{\bEta_0}(y)=\exp\left[-\left\{\frac{\xi_0}{\tau_0}(y-\beta_0)\right\}^{-1/\xi_0}\right],
\end{equation*}
we can write
\begin{equation}\label{eqn:unif1}
\begin{split}
    Y_{(j)}-Y_{(1)}&=(Y_{(j)}-\beta_0)-(Y_{(1)}-\beta_0)\\
    &=\frac{\tau_0}{\xi_0}\left\{\left(-\log U_{(j)}\right)^{-\xi_0}-\left(-\log U_{(1)}\right)^{-\xi_0}\right\},
\end{split}
\end{equation}
Meanwhile, recall that $|\log x - \log y|\leq |x-y|/\min(x,y)$. We have
\begin{equation}\label{eqn:unif2}
\begin{split}
    \log&\left(-\log U_{(j)}\right)^{-\xi_0}-\log\left\{\left(-\log U_{(j)}\right)^{-\xi_0}-\left(-\log U_{(1)}\right)^{-\xi_0}\right\}\leq \\
    &\frac{\left(-\log U_{(1)}\right)^{-\xi_0}}{\left(-\log U_{(j)}\right)^{-\xi_0}-\left(-\log U_{(1)}\right)^{-\xi_0}}=\frac{1}{\left(\log U_{(j)}/\log U_{(1)}\right)^{-\xi_0}-1}.
\end{split}
\end{equation}
Combine \eqref{eqn:unif1} and \eqref{eqn:unif2} to obtain
\begin{equation}\label{eqn:unif3}
\begin{split}
    \sum_{j=2}^n \log\frac{\xi_0\left(Y_{(j)}-Y_{(1)}\right)}{\tau_0}&=\sum_{j=2}^n \log \left\{\left(-\log U_{(j)}\right)^{-\xi_0}-\left(-\log U_{(1)}\right)^{-\xi_0}\right\}\\
    &\geq \sum_{j=2}^n \log\left(-\log U_{(j)}\right)^{-\xi_0}-\sum_{j=2}^n \frac{1}{\left(\log U_{(j)}/\log U_{(1)}\right)^{-\xi_0}-1}.
\end{split}
\end{equation}

Next we further relax the lower bound in \eqref{eqn:unif3}. On one hand, 
\begin{equation}\label{eqn:unifA}
    \begin{split}
        \frac{1}{n}\sum_{j=2}^n \log\left(-\log U_{(j)}\right)^{-\xi_0}&=\frac{1}{n}\sum_{i=1}^n \log\left(-\log U_i\right)^{-\xi_0}-\frac{\log\left(-\log U_{(1)}\right)^{-\xi_0}}{n}\\
        &\rightarrow E \log(-\log U)^{-\xi_0}-\lim_{n\rightarrow\infty}\frac{\log\left(\log n\right)^{-\xi_0}}{n}=\xi_0\gamma\qquad \text{a.s.}
    \end{split}
\end{equation}
For the second term on the right-hand side of \eqref{eqn:unif3}, we begin with Theorem 2 of \citet{chung1949estimate}, 
\begin{equation*}
    \mathcal{P}\left(\limsup_{n\rightarrow\infty}\frac{n\sup_{t\in\mathbb{R}}|\mathbb{F}_n(t)-F(t)|}{\sqrt{2^{-1}n\log_2n}}=1\right)=1
\end{equation*}
for any continuous distribution function $F$ with $\mathbb{F}_n$ as its empirical distribution function. Equivalently, when $n$ is very large, the classic location of $U_{(j)}$ is at $r/n$, and uniformly
\begin{equation*}
    \left|U_{(j)}-\frac{j}{n}\right|\leq \frac{\log_2n}{2n}, \;j=1,\ldots, n.
\end{equation*}
Therefore, there almost surely exists $N_1>0$ such that for all $n>N_1$,
\begin{equation*}
    1\leq\frac{\log U_{(j)}}{\log U_{(1)}}\leq 1-\frac{\log j}{2\log n},\;j=2,\ldots,n.
\end{equation*}
It follows that
\begin{equation}\label{eqn:unifB}
    \begin{split}
        \sum_{j=2}^n \frac{1}{\left(\log U_{(j)}/\log U_{(1)}\right)^{-\xi_0}-1}&\leq \sum_{j=2}^n \frac{1}{\{1-\log j/(2\log n)\}^{-\xi_0}-1}\\
        &\leq \frac{2\log n}{\xi_0}\sum_{j=2}^n \frac{1}{\log j},
    \end{split}
\end{equation}
where the last inequality holds because $(1-x)^{-\xi_0}-1\geq \xi_0x$ for $x\in (0,1)$ and $\xi_0>0$.

Plugging \eqref{eqn:unifA} and \eqref{eqn:unifB} back into \eqref{eqn:unif3} shows that there almost surely exists $N_2>0$ such that for all $n>N_2$,
\begin{equation*}
    \sum_{j=2}^n \log\frac{\xi_0\left(Y_{(j)}-Y_{(1)}\right)}{\tau_0}
    \geq n\xi_0\gamma - \frac{2\log n}{\xi_0}\sum_{j=2}^n \frac{1}{\log j}.
\end{equation*}
Since $\sum_{j=2}^n 1/\log j\leq \int_{2}^{n+1} (1/\log x) dx=\text{li}(n+1)-\text{li}(2)<2n/\log n$, where $\text{li}(\cdot)$ is the logarithmic integral function, it follows that
\begin{equation*}
    \sum_{j=2}^n \log\frac{\xi_0\left(Y_{(j)}-Y_{(1)}\right)}{\tau_0}\geq n\xi_0\gamma - \frac{4n}{\xi_0},
\end{equation*}
which is exactly \eqref{eqn:unifI} after applying the exponential function on both sides of the previous inequality.

\textbf{(II)} Now we prove \eqref{eqn:spacing}. Lemma~\ref{lem:pseudo_SLLN} ensures
\begin{equation}\label{eqn:delta_comp}
    \left\{\prod_{i=1}^n \frac{\xi_0(Y_i-\hat{\beta}_n)}{\tau_0}\right\}^{1/n}=\exp\left\{\frac{1}{n}\sum_{i=1}^n\log\frac{\xi_0(Y_i-\hat{\beta}_n)}{\tau_0} \right\}\rightarrow \exp(\xi_0\gamma)\qquad \text{a.s.}
\end{equation}
Thus there almost surely exists $N_3>0$ such that for all $n>N_3$,
\begin{equation*}
    \prod_{j=2}^n \frac{\xi_0(Y_{(j)}-\hat{\beta}_n)}{\tau_0} < \left\{\frac{\exp(\xi_0\gamma)}{2}\right\}^n \frac{\tau_0}{\xi_0(Y_{(1)}-\hat{\beta}_n)}.
\end{equation*}

To sum up, we combine the previous inequality with \eqref{eqn:unifI} to conclude that there almost surely exists $N=\max\{N_1, N_2, N_3\}$ such that for all $n>N$,
\begin{equation*}
     \prod_{j=2}^n \frac{Y_{(j)}-\hat{\beta}_n}{Y_{(j)}-Y_{(1)}}< \frac{\tau_0\exp(4n/\xi_0)}{2^n\xi_0(Y_{(1)}-\hat{\beta}_n)},
\end{equation*}
which ends the proof of this lemma.
\end{proof}

\subsection{Proof that \texorpdfstring{$C_n^{(3)}$}{Lg} is negligible}
For $C_n^{(3)}$, we utilize the moments of a generalized Pareto distribution. Suppose $X\sim \text{GP}(\kappa,\tau)$ with density
\begin{equation*}
    f(x)=\frac{1}{\tau}\left(1+\frac{\kappa }{\tau}x\right)^{-1-1/\kappa},\quad x>0.
\end{equation*}
The moment generating function is
\begin{equation*}
    M_X(t)=\sum_{m=0}^\infty \frac{(t\tau)^m}{\prod_{i=0}^m (1-i\kappa)}.
\end{equation*}
Then
\begin{equation}\label{gp_moments}
    EX^m=M_X^{(m)}(0)=\frac{m!\tau^m}{\prod_{i=0}^m (1-i\kappa)},\text{ where }m\kappa<1.
\end{equation}

\begin{proof}[Proof of $C_n^{(3)}$ being negligible]
Fix $r_2>1$. Denote $\Tilde{\beta}=\beta_0-r_2$, $\Tilde{\tau}=\tau_0+r_3$ and $\Tilde{\delta}_i=Y_i-\Tilde{\beta}$. Since $Y_{(1)}>\beta_0$, we know $Y_{(j)}-\Tilde{\beta}>1$ for all $j=1,\ldots,n$. Similarly as in \eqref{substitutions}, we apply substitutions $t=\{\sum_{i=1}^n V_i^{-1/\xi}(\bEta)\}\tau^{1/\xi}$ and $s=(\Tilde{\beta}-\beta)^{-1}$ to get
\begin{equation}\label{eqn:C_n3}
    \begin{split}
        C_n^{(3)}&=\int_{\Omega_n^3}\prod_{i=1}^n p(Y_i\given \bEta)\pi(\bEta)d\bEta\\
        &=\int_{0}^{\xi_0+r_1}\int_{-\infty}^{\beta_0-r_2}g(\xi)\prod_{i=1}^n V_i^{-1/\xi-1}(\bEta)\int_0^{\tau_0+r_3} \tau^{n/\xi-1}\exp\left\{-\sum_{i=1}^n W^{-1/\xi}_i(\bEta)\right\}d\tau d\beta d\xi\\
        &=\int_{0}^{\xi_0+r_1}\int_{-\infty}^{\Tilde{\beta}}\frac{g(\xi)\prod_{i=1}^n V_i^{-1/\xi-1}(\bEta)}{\xi^{-1}\left\{\sum_{i=1}^n V_i^{-1/\xi}(\bEta)\right\}^n}\gamma\left(n,(\tau_0+r_3)^{1/\xi}\sum_{i=1}^n V_i^{-1/\xi}(\bEta)\right) d\beta d\xi\\
         &\leq\int_0^{\xi_0+r_1} \int_{0}^{\infty}g(\xi)\Gamma(n)\xi^{-n+1}\frac{s^{n-2}\prod_{i=1}^n(1+\Tilde{\delta}_i s)^{-{1}/{\xi}-1}}{\left\{\sum_{i=1}^n(1+\Tilde{\delta}_i s)^{-1/\xi}\right\}^n} ds d\xi.\\
         &\leq\int_0^{\xi_0+r_1} \int_{0}^{\infty}g(\xi)\Gamma(n)\xi^{-n+1}\frac{s^{n-2}\prod_{i=1}^{n}(1+\Tilde{\delta}_i s)^{-{1}/{(\xi\log n)}-1}}{n^{n-n/\log n}} ds d\xi.
    \end{split}
\end{equation}
The penultimate line is due to $\gamma(n,\cdot)\leq \Gamma(n)$. The last line is obtained similarly to \eqref{harmonic_means} while raising both sides of the inequality by $n-n/\log n$. 

By \citet[p.130]{dragoslav1964elementary},
\begin{equation*}
    \prod_{i=1}^{n}(1+\Tilde{\delta}_i s)\geq (1+\bar{\delta}_n s)^{n}, \text{ where }\Tilde{\delta}_i>0,\;\prod_{i=1}^{n}\Tilde{\delta}_i=\bar{\delta}_n^{n}.
\end{equation*}
Thus, \eqref{gp_moments} with parameters $(\kappa,\tau)$ such that $1/\kappa=n-1+n/(\xi\log n)$ and $\kappa/\tau=\bar{\delta}_n$ gives
\begin{equation*}
    \begin{split}
        \int_{0}^{\infty}& s^{n-2}\prod_{i=1}^{n}(1+\Tilde{\delta}_i s)^{-{1}/{(\xi\log n)}-1} ds\leq \int_{0}^{\infty} s^{n-2}(1+\bar{\delta}_n s)^{-n-n/(\xi\log n)} ds\\
        &=\frac{\Gamma(n-1)}{(\prod_{i=1}^{n}\Tilde{\delta}_j)^{1-1/n}}\cdot\frac{1}{\prod_{i=0}^{n-2}\{n-1+n/(\xi\log n)-i\}}.
    \end{split}
\end{equation*}
Plug this result back to the right-hand side of \eqref{eqn:C_n3} while letting $M_2>0$ be the upper bound of $g(\xi)$ in $(-1/2,\xi_0+r_1)$ to obtain
\begin{equation*}
    \begin{split}
        C_n^{(3)}&\leq \frac{\Gamma(n-1)\Gamma(n)e^{n}}{n^n(\prod_{i=1}^{n}\Tilde{\delta}_j)^{1-1/n}}\left(\frac{\log n}{n}\right)^{n-1}\int_0^{\xi_0+r_1}\frac{g(\xi)}{\prod_{i=1}^{n-1}(1+n\xi i/\log n)} d\xi\\
        &\leq\frac{\Gamma(n-1)\Gamma(n)e^{n}}{n^n(\prod_{i=1}^{n}\Tilde{\delta}_j)^{1-1/n}}\left(\frac{\log n}{n}\right)^{n-1}\int_0^{\xi_0+r_1}\frac{M_2}{(1+n\Gamma^{1/(n-1)}(n)\xi/\log n)^{n-1}} d\xi\\
        &=\frac{\Gamma(n-1)\Gamma(n)e^{n}}{n^n(\prod_{i=1}^{n}\Tilde{\delta}_j)^{1-1/n}}\left(\frac{\log n}{n}\right)^{n}\frac{M_2}{(n-2)\Gamma^{1/(n-1)}(n)}\times\\
        &\qquad\left[1-\frac{1}{(1+n\Gamma^{1/(n-1)}(n)(\xi_0+r)/\log n)^{n-1}}\right]\\
        &\sim \frac{\Gamma(n-1)\Gamma(n)e^{n}}{n^n(\prod_{i=1}^{n}\Tilde{\delta}_j)^{1-1/n}}\left(\frac{\log n}{n}\right)^{n}\frac{M_2 e}{n(n-2)}.
    \end{split}
\end{equation*}
The third-to-last line is obtained by applying the inequality from \citet[p.130]{dragoslav1964elementary} again, and the last line is due to the Stirling's approximation.

Now we use Lemma \ref{lem:lower_bound} to deduce
\begin{equation}\label{eqn:c_n3_approx}
\begin{split}
    \frac{C_n^{(3)}}{C_n}\leq&M_3\frac{\Gamma(n-1)\Gamma(n)e^{2n+1}\log^n n}{n^{2n+1}(n-2)}\tau_0^{n}\left\{\prod_{i=1}^n W_i(\hat{\bEta}_n)\right\}^{1+1/\hat{\xi}_n}\big/\left(\prod_{i=1}^{n}\Tilde{\delta}_j\right)^{1-1/n},
\end{split}
\end{equation}
where $M_3=(2\pi)^{-3/2}|I(\bEta_0)|^{1/2}\pi^{-1}(\bEta_0)M_2$. Stirling's approximation ensures 
\begin{equation*}
    \frac{\Gamma(n-1)\Gamma(n)e^{2n+1}}{n^{2n+1}(n-2)}\rightarrow 0.
\end{equation*}
Lemma~\ref{lem:pseudo_SLLN} gives
\begin{equation}\label{eqn:w_i_xi_1}
    \prod_{i=1}^n W^{1+1/\hat{\xi}_n}_i(\hat{\bEta}_n)=\exp\left\{\left(1+\frac{1}{\hat{\xi}_n}\right)\sum_{i=1}^n \log W_i(\hat{\bEta}_n)\right\}\sim \exp\{n\gamma(1+\xi_0)\}
\end{equation}
almost surely.
Meanwhile, since $Y_{(j)}-\Tilde{\beta}>1$ for all $j=1,\ldots,n$, $\sum_{i=1}^{n}\log\Tilde{\delta}_j/n\rightarrow\infty$ in contrast to \eqref{eqn:delta_comp}. More specifically,
\begin{equation*}
    \prod_{i=1}^{n}\Tilde{\delta}_j\geq \prod_{j=\lceil \log n\rceil}^{n}\Tilde{\delta}_{(j)}\geq \prod_{j=\lceil \log n\rceil}^{n}(Y_{(j)}-\beta_0)\geq (Y_{(\lceil \log n\rceil)}-\beta_0)^{n-\log n},
\end{equation*}
in which $\delta_{(j)}=Y_{(j)}-\beta_0+r_2$. Similar to \eqref{eqn:unif1} in Lemma~\ref{lem:delta_diff}, we know
\begin{equation*}
   Y_{(\lceil \log n\rceil)}-\beta_0=\frac{\tau_0}{\xi_0}\left\{-\log U_{(\lceil \log n\rceil)}\right\}^{-\xi_0}\geq \frac{\tau_0}{\xi_0}\left(\frac{n}{\log n}\right)^{\xi_0}.
\end{equation*}

Combine the previous results to get 
\begin{equation*}
    \begin{split}
        \tau_0^{n}\log^n n\left\{\prod_{i=1}^n W_i(\hat{\bEta}_n)\right\}^{1+1/\hat{\xi}_n}\big/&\left(\prod_{i=1}^{n}\Tilde{\delta}_j\right)^{1-1/n}\leq\exp[n\{-\xi_0\log n/2-\xi_0\log n(\log\log n)/n+\\
        &\xi_0\log^2 n/n+(1+\xi_0)\log\log n+\log\tau_0+\gamma(1+\xi_0)\}].
    \end{split}
\end{equation*}
This upper bound converges to zero because $-\xi_0\log n/2$ is the dominating term in the curly brackets.

Therefore, the upper bound in \eqref{eqn:c_n3_approx} converges to zero almost surely, which means $C_n^{(3)}/C_n\rightarrow 0$ almost surely.
\end{proof}

\subsection{Proof that \texorpdfstring{$C_n^{(4)}$}{Lg} is negligible}\label{proof:c_n4}
\begin{proof}
Similarly as in \eqref{substitutions}, we obtain 
\begin{equation*}
    \begin{split}
        C_n^{(4)}&=\int_{\Omega_n^4}\prod_{i=1}^n p(Y_i\given \bEta)\pi(\bEta)d\bEta\\
        &=\int_{0}^{\xi_0+r_1}\int_{-\infty}^{Y_{(1)}}g(\xi)\prod_{i=1}^n V_i^{-1/\xi-1}(\bEta)\int_{\tau_0+r_3}^\infty \tau^{n/\xi-1}\exp\left\{-\sum_{i=1}^n W^{-1/\xi}_i(\bEta)\right\}d\tau d\beta d\xi\\
        &=\int_{0}^{\xi_0+r_1}\int_{-\infty}^{Y_{(1)}}\frac{g(\xi)\prod_{i=1}^n V_i^{-1/\xi-1}(\bEta)}{\xi^{-1}\left\{\sum_{i=1}^n V_i^{-1/\xi}(\bEta)\right\}^n}\Gamma\left(n,(\tau_0+r_3)^{1/\xi}\sum_{i=1}^n V_i^{-1/\xi}(\bEta)\right) d\beta d\xi.
    \end{split}
\end{equation*}

By the inequality of harmonic and geometric means,
\begin{equation*}
    \begin{split}
        \prod_{i=1}^n V_i^{-1/\xi-1}(\bEta)&=\left\{\prod_{i=1}^n V_i^{-1/\xi-1}(\bEta)\right\}^{1/n}\times \left\{\prod_{i=1}^n V_i^{-1/\xi}(\bEta)\right\}^{(n-1)(\xi+1)/n}\\
        &\leq \left\{\frac{1}{n}\sum_{i=1}^n V_i^{-1/\xi-1}(\bEta)\right\}\times \left\{\frac{1}{n}\sum_{i=1}^n V_i^{-1/\xi}(\bEta)\right\}^{(n-1)(\xi+1)}.
    \end{split}
\end{equation*}
Plug this inequality back into the right-hand side of the previous display and then apply the substitution $q=(\tau_0+r_3)^{1/\xi}\sum_{i=1}^n V_i^{-1/\xi}(\bEta)=(\tau_0+r_3)^{1/\xi}\sum_{i=1}^n \{\xi(Y_i-\beta)\}^{-1/\xi}$ to obtain
\begin{footnotesize}
\begin{align*}
    C_n^{(4)}&\leq \int_{0}^{\xi_0+r_1}\int_{-\infty}^{Y_{(1)}}\frac{g(\xi)\left\{\sum_{i=1}^n V_i^{-1/\xi}(\bEta)\right\}^{(n-1)\xi-1}}{\xi^{-1}n^{(n-1)\xi+n}}\Gamma\left(n,(\tau_0+r_3)^{1/\xi}\sum_{i=1}^n V_i^{-1/\xi}(\bEta)\right)\times\\
    &\qquad\qquad\sum_{i=1}^n V_i^{-1/\xi-1}(\bEta) d\beta d\xi\\
    &=\int_{0}^{\xi_0+r_1}\frac{g(\xi)(\tau_0+r_3)^{-(n-1)}}{\xi^{-1}n^{(n-1)\xi+n}}\int_{0}^{\infty}q^{(n-1)\xi-1}\Gamma\left(n,q\right) dq d\xi\\
    &=\int_{0}^{\xi_0+r_1}\frac{g(\xi)(\tau_0+r_3)^{-(n-1)}}{n^{(n-1)\xi+n}}\cdot\frac{\Gamma((n-1)\xi+n)}{(n-1)} d\xi.
\end{align*}
\end{footnotesize}
The last equality utilizes the fact that $\int_0^{\infty}q^{a-1}\Gamma(b,q)dq=\Gamma(a+b)/a$ for $a,b>0$. By Stirling's approximation, $\Gamma(x)\leq 3\sqrt{x}(x/e)^{x}$ for large $x$ (e.g., $x>20$). Therefore for $\xi \in (0,\xi_0+r_1)$,
\begin{equation*}
\begin{split}
    \frac{\Gamma((n-1)\xi+n)}{n^{(n-1)\xi+n}}&\leq 3\sqrt{(n-1)\xi+n}\left\{\frac{(n-1)\xi+n}{ne}\right\}^{(n-1)\xi+n}\\
    &< 3\sqrt{n(\xi_0+r_1+1)}\left(\frac{\xi_0+r_1+1}{e}\right)^{(n-1)\xi+n},
\end{split}
\end{equation*}
and it follows that
\begin{equation*}
    \begin{split}
        \int_{0}^{\xi_0+r_1}\frac{g(\xi)\Gamma((n-1)\xi+n)}{n^{(n-1)\xi+n}} d\xi &< 3M_2\sqrt{n(\xi_0+r_1+1)}\int_{0}^{\xi_0+r_1}\left(\frac{\xi_0+r_1+1}{e}\right)^{(n-1)\xi+n}d\xi\\
        &<\frac{3M_2\sqrt{n(\xi_0+r_1+1)}[(\xi_0+r_1+1)/e]^{(n-1)(\xi_0+r_1)+n}}{\log[(\xi_0+r_1+1)/e]},
    \end{split}
\end{equation*}
where $M_2$ is the upper bound of $g(\xi)$ in $(-1/2,\xi_0+r_1)$.

Hence, $C_n^{(4)}$ can be further bounded by 
\begin{equation*}
    C_n^{(4)}\leq M_4\frac{\sqrt{n}}{n-1}\left(\frac{\xi_0+r_1+1}{e}\right)^{(n-1)(\xi_0+r_1+1)}(\tau_0+r_3)^{-n},
\end{equation*}
where 
\begin{equation*}
    M_4=3M_2(\tau_0+r_3)\frac{(\xi_0+r_1+1)^{3/2}/e}{\log[(\xi_0+r_1+1)/e]}.
\end{equation*}
Combining this result with Lemma \ref{lem:lower_bound} and denoting $M_5=2M_4|I(\bEta_0)|^{1/2}/\{(2\pi)^{3/2}\pi(\bEta_0)\}$, we get
\begin{small}
\begin{equation}\label{eqn2:C_n3}
    \frac{C_n^{(4)}}{C_n}\leq \frac{M_5n^2 e^n}{n-1}\left(\frac{\tau_0+r_3}{\hat{\tau}_n}\right)^{-n}\left(\frac{\xi_0+r_1+1}{e}\right)^{(n-1)(\xi_0+r_1+1)}\left\{\prod_{i=1}^n W_i(\hat{\bEta}_n)\right\}^{1+1/\hat{\xi}_n}.
\end{equation}
\end{small}
Similar to the proof in Section~\ref{proof:c_n2}, we now prove the right-hand side of \eqref{eqn2:C_n3} converges almost surely to $0$. 
From \eqref{eqn:w_i_xi_1}, we see that the dominating term on the right-hand side of \eqref{eqn2:C_n3} is 
\begin{equation*}
    \exp\left[n\left\{1+\log \frac{\hat{\tau}_n}{\tau_0}-\log\left(1+\frac{r_3}{\tau_0}\right)+(\xi_0+r_1+1)\log\left(\frac{\xi_0+r_1+1}{e}\right)+(1+\xi_0)\gamma\right\}\right]
\end{equation*}
Since $\hat{\tau}_n\rightarrow \tau_0$ almost surely and $r_3$ is predetermined to satisfy $\log\left(1+r_3/\tau_0\right)>(\xi_0+r_1+1)\log\{(\xi_0+r_1+1)/e\}+(1+\xi_0)\gamma+1$, this dominating term converges almost surely to $0$ and thus $C_n^{(4)}/C_n\rightarrow 0$ almost surely.
\end{proof}

\vspace*{-0.5cm}\subsection{Proof that \texorpdfstring{$C_n^{(5)}$}{Lg} is negligible}\label{proof:c_n5}
\begin{proof}
Denote $\Delta_j=\Delta_j(n)=Y_{(n)}-Y_{(j)}$. Similarly to \eqref{substitutions}, we apply the substitutions $t=\tau^{1/\xi}\sum_{i=1}^n\{\xi(Y_i-\beta)\}^{-1/\xi}$ and $s=(\beta-Y_{(n)})^{-1}$ sequentially:
\begin{equation}\label{eqn:C_n5}
    \begin{split}
        C_n^{(5)}&=\int_{\Omega_n^5}\prod_{i=1}^n p(Y_i\given \bEta)\pi(\bEta)d\bEta\\
        &=\int_{-1/2}^{0}\int_{Y_{(n)}}^{\infty}\frac{g(\xi)\prod_{i=1}^n V_i^{-1/\xi-1}(\bEta)}{(-\xi)^{-1}\left\{\sum_{i=1}^n V_i^{-1/\xi}(\bEta)\right\}^n}\Gamma(n) d\beta d\xi\\
         &=\int_{-1/2}^{0} g(\xi)\Gamma(n)(-\xi)^{-n+1}\int_{0}^{\infty}\frac{s^{n-2}\prod_{j=1}^{n-1}(1+\Delta_j s)^{-{1}/{\xi}-1}}{\left\{1+\sum_{j=1}^{n-1}(1+\Delta_j s)^{-1/\xi}\right\}^n} ds d\xi\\
         &\leq\int_{-1/2}^{0} g(\xi)\Gamma(n)(-\xi)^{-n+1}\int_{0}^{\infty}\frac{s^{n-2}\prod_{j=1}^{n-1}(1+\Delta_j s)^{-{1}/{(2\xi)}-1}}{\left\{1+\sum_{j=1}^{n-1}(1+\Delta_j s)^{-1/\xi}\right\}^n} ds d\xi\\
         &\leq\int_{-1/2}^{0} g(\xi)\Gamma(n)n^{-n}(-\xi)^{-n+1}\int_{0}^{\infty}s^{n-2}\prod_{j=1}^{n-1}(1+\Delta_j s)^{{1}/{(2\xi)}-1} ds d\xi.\\
    \end{split}
\end{equation}
The penultimate line is obtained due to $\xi<0$ and $(1+\Delta_j s)^{1/\xi}<1$, $j=1,\ldots,n-1$. The last line is obtained via applying \eqref{harmonic_means} to the numerator of the inner integrand. By \citet[p.130]{dragoslav1964elementary},
\begin{equation*}
    \prod_{j=1}^{n-1}(1+\Delta_j s)\geq (1+\bar{\Delta}_n s)^{n-1}, \text{ where }\Delta_j>0,\;\prod_{j=1}^{n-1}\Delta_j=\bar{\Delta}_n^{n-1}.
\end{equation*}
Thus, \eqref{gp_moments} with parameters $(\xi,\tau)$ such that $1/\kappa=n-2-(n-1)/(2\xi)$ and $\kappa/\tau=\bar{\Delta}_n$ gives
\begin{equation*}
    \begin{split}
        \int_{0}^{\infty}& s^{n-2}\prod_{j=1}^{n-1}(1+\Delta_j s)^{{1}/{(2\xi)}-1} ds\leq \int_{0}^{\infty} s^{n-2}(1+\bar{\Delta}_n s)^{(n-1)\{{1}/{(2\xi)}-1\}} ds\\
        &=\frac{\Gamma(n-1)}{\prod_{j=1}^{n-1}\Delta_j}\cdot\frac{1}{\prod_{i=0}^{n-2}\{-(n-1)/(2\xi)+n-2-i\}}.
    \end{split}
\end{equation*}
Plug this result back to the right-hand side of \eqref{eqn:C_n5}, and recall that $M_2$ is the upper bound of $g(\xi)$ in $(-1/2,\xi_0+r_1)$:
\begin{equation*}
    \begin{split}
        C_n^{(5)}&\leq \frac{\Gamma(n-1)\Gamma(n)n^{-n}}{\prod_{j=1}^{n-1}\Delta_j}\int_{-1/2}^{0}\frac{g(\xi)}{\prod_{i=0}^{n-2}\{(n-1)/2-(n-2-i)\xi\}} d\xi\\
        &\leq \frac{\Gamma(n-1)\Gamma(n)n^{-n}}{\prod_{j=1}^{n-1}\Delta_j}\left(\frac{2}{n-1}\right)^{n-1}\int_{-1/2}^{0}\frac{M_2}{\{1-2\Gamma^{1/(n-1)}(n-1)\xi/(n-1)\}^{n-1}} d\xi\\
        &=\frac{\Gamma(n-1)\Gamma(n)n^{-n}}{\prod_{j=1}^{n-1}\Delta_j}\cdot\frac{2^{n-1}M_2}{(n-1)^n}\cdot\frac{1-(1+\Gamma^{1/(n-1)}(n-1)/(n-1))^{-n+2}}{2\Gamma^{1/(n-1)}(n-1)/(n-1)}\\
        &\sim \frac{\Gamma(n-1)\Gamma(n)n^{-n}}{\prod_{j=1}^{n-1}\Delta_j}\cdot\frac{2^{n-2}M_2}{(n-1)^n}
    \end{split}
\end{equation*}
The third-to-last line is obtained by applying the inequality from \citet[p.130]{dragoslav1964elementary} again, and the last line is due to the Stirling's approximation.

Once more we use Lemma \ref{lem:lower_bound} to deduce
\begin{equation}
\begin{split}
    \frac{C_n^{(5)}}{C_n}\leq&M_3\frac{\Gamma(n-1)\Gamma(n)2^{n-2}e^n}{n^n(n-1)^n}\tau_0^{n}\left\{\prod_{i=1}^n W_i(\hat{\bEta}_n)\right\}^{1+1/\hat{\xi}_n}\big/\prod_{j=1}^{n-1}\Delta_j\rightarrow 0\qquad \text{a.s.},
\end{split}
\end{equation}
where $M_3=(2\pi)^{-3/2}|I(\bEta_0)|^{1/2}\pi^{-1}(\bEta_0)M_2$.
\end{proof}


\section{Outline of proof for negative and zero shapes}\label{appendix:outline}
\subsection{The case \texorpdfstring{$\xi_0\in (-1/2,0)$}{Lg}}
The regions we consider when $\xi_0\in (-1/2,0)$ are
\begin{equation*}
    \begin{split}
        \Omega_n^1&=\{\bEta\in \Omega_n\setminus B_{r}(\bEta_0):\xi_0-r_1<\xi<0, Y_{(n)}<\beta<\beta_0+r_2, \tau<\tau_0+r_3\},\\
      \Omega_n^2&=\{\bEta\in \Omega_n:-1/2<\xi<\xi_0-r_1\},\\
      \Omega_n^3&=\{\bEta\in \Omega_n:\xi_0-r_1<\xi<0, \beta>\beta_0+r_2,\tau<\tau_0+r_3\},\\
      \Omega_n^4&=\{\bEta\in \Omega_n:\xi_0-r_1<\xi<0, \tau>\tau_0+r_3\},\; \Omega_n^5=\{\bEta\in \Omega_n:\xi>0\},  
      \end{split}
\end{equation*}
and $r_1,\;r_2,\;r_3>r$ are pre-specified constants such that
\begin{equation*}
    \begin{split}
        \log\left(1-\frac{r_1}{\xi_0}\right)&>-\frac{4}{\xi_0}-\log 2+\gamma,\quad r_2>1,\\
        \log\left(1+\frac{r_3}{\tau_0}\right)&>(r_1+1-\xi_0)\log\frac{r_1+1-\xi_0}{e}+(1-\xi_0)\gamma+1.
    \end{split}
\end{equation*}
Again, we denote the contributions to the numerator in \eqref{eqn:ratio_int_C_n} from integrals over these sub-regions as $C_n^{(1)},\ldots,C_n^{(5)}$. Because Lemma \ref{lem:dombry_lem6}--\ref{lem:posterior_mass_zero_tau} hold for $\xi_0\neq 0$, the proof for $\lim_{n\rightarrow\infty} C_n^{(1)}/C_n=0$ (i.e. Proposition \ref{prop:compact_int}) when $\xi_0\in (-1/2,0)$ remains the same as in Appendix \ref{appendix:C_n_in_K}. To show $\lim_{n\rightarrow\infty} C_n^{(k)}/C_n=0$, $k=2,\ldots, 5$ almost surely, we simply need to make fine adjustments to the proofs in Appendix \ref{proof:c_n2}--\ref{proof:c_n5} respectively.

\subsection{The case \texorpdfstring{$\xi_0=0$}{Lg}}\label{appdx:zero_shape}
When $\xi_0=0$, $\hat{\xi}_n=O_p(n^{-1/2})$ is arbitrarily small and $\hat{\xi}_n$ can be either positive or negative,
which greatly complicates the behavior of $L_n$ when $\xi$ is around $0$. However, when $\xi_0=0$, the log-likelihood $L_n$ is highly peaked in the rectangular box $\{|\tau-\tau_0|<1, |\mu-\mu_0|<1, \xi\in [-\exp(\gamma)/\log n, \exp(\gamma)/ \log \log n]\}$ (note that the width of the range of $\xi$ is arbitrarily small for large $n$) \citep[Appendix F]{zhang2020uniqueness}. \LZadd{Therefore it is easier to work with the $\btheta$-parameterization}.  We can show by following the proof in Appendix \ref{proof:c_n2} and \ref{proof:c_n5} that the contributions to the numerator in \eqref{eqn:ratio_int_C_n} from integrals over
\begin{equation*}
    \begin{split}
      \Omega_n^2&=\{\btheta\in \Omega_n:\xi>\xi_0+r_1=r_1\},\;  \Omega_n^5=\{\btheta\in \Omega_n:-1/2<\xi<\xi_0-r_1=-r_1\},  
      \end{split}
\end{equation*}
are negligible for any fixed constant $r_1\in(0,1/2)$.

In the meantime, we can dissect $\Omega_n\cap \{\xi\in [-r_1,r_1]\}$ as follows:
\begin{equation*}
    \begin{split}
        \Omega_n^1&=\{\btheta\in \Omega_n\setminus B_{r}(\btheta_0):-r_1<\xi<r_1, \mu_0-r_2<\mu<\mu_0+r_2, 0<\tau<\tau_0+1\},\\
       \Omega_n^3&=\{\btheta\in \Omega_n:-r_1<\xi<r_1, |\mu-\mu_0|>r_2,\tau<\tau_0+1\},\\
      \Omega_n^4&=\{\btheta\in \Omega_n:-r_1<\xi<r_1, \tau>\tau_0+1\},
      \end{split}
\end{equation*}
where $r_2>\max\{\tau_0\gamma+\zeta, 2(\tau_0+1)\}$ with $\zeta> \tau_0\gamma+\gamma+1$. To prove $\lim_{n\rightarrow\infty} C_n^{(1)}/C_n=0$, we show that $\lim_{\epsilon\rightarrow 0} P_{\btheta_0}[l_{B(\btheta,\epsilon)}]=-\infty$ \LZadd{and $\lim_{n\rightarrow\infty}\int_{B(\btheta, \epsilon)\cap\Omega_n}\pi_n(\btheta)d\btheta = 0$} for $\btheta$ on the boundary of $\Omega_n^1$ with $\tau=0$, similarly to Lemma \ref{lem:dombry_lem6} \LZadd{and \ref{lem:posterior_mass_zero_tau}}. Combining this result and Lemma 6 in \citet{dombry2015existence} again establishes $\lim_{n\rightarrow\infty} C_n^{(1)}/C_n=0$ in the same way as the proof for Proposition \ref{prop:compact_int}. To show $\lim_{n\rightarrow\infty} C_n^{(4)}/C_n=0$, one can slightly alter the proof in Appendix \ref{proof:c_n4}. \LZadd{Lastly, the proof of $\lim_{n\rightarrow\infty} C_n^{(3)}/C_n=0$ is given in Lemma \ref{lem:cn_3_zero_shape}.}

\LZadd{\begin{lemma}\label{lem:cn_3_zero_shape}
Suppose $Y_1, Y_2,\ldots$ are independently sampled from $P_{\btheta_0}$ with $\xi_0=0$. Then $\lim_{n\rightarrow\infty} \int_{\Omega_n^3}\pi_n(\btheta) d\btheta=0$ almost surely.
\end{lemma}}
\begin{proof}
\LZadd{When $-1/2<-r_1<\xi<r_1$, note that $(1+a\xi)^{1/\xi}$ is a decreasing function of $\xi$ for any fixed $a\in \mathbb{R}$ for which in $1+a\xi>0$. 
Then we have
\begin{equation}\label{eqn:jensen0}
    \begin{split}
        \frac{1}{n}\sumN W_i^{-1/\xi}(\btheta) &= \frac{1}{n}\sumN \left\{1+\frac{\xi(Y_i-\mu)}{\tau}\right\}^{-1/\xi}\geq \frac{1}{n}\sumN\left\{1-\frac{Y_i-\mu}{2\tau}\right\}^2\\
        &=1-\frac{\bar{Y}_n-\mu}{\tau}+\frac{1}{n}\sumN\frac{(Y_i-\mu)^2}{4\tau^2}\geq 1-\frac{\bar{Y}_n-\mu}{\tau}+\frac{(\bar{Y}_n-\mu)^2}{4\tau^2},
    \end{split}
\end{equation} 
in which $\bar{Y}_n$ is the sample mean which tends to $\mu_0+\tau_0\gamma$ almost surely as $n\rightarrow\infty$, and the last inequality uses the convexity of the quadratic function. }

\LZadd{Meanwhile, $r_2>\max\{\tau_0\gamma+\zeta, 2(\tau_0+1)\}$ ensures that $(\bar{Y}_n-\mu)/\tau\in (-\infty,0)\cup (2, \infty)$ for all large $n$ and all $\btheta\in \Omega_n^3$. The concavity of the logarithm function gives
\begin{equation}\label{eqn:jensen2}
   \begin{split}
    n^{-1}&\left(-1-\frac{1}{\xi}\right)\sumN \log W_i(\btheta)\leq 
    \left(-1-\frac{1}{\xi}\right) \log \left\{1+\frac{\xi(\bar{Y}_n-\mu)}{\tau}\right\}\\
    &\leq \lim_{\xi\nearrow 0}\left(-1-\frac{1}{\xi}\right) \log \left\{1+\frac{\xi(\bar{Y}_n-\mu)}{\tau}\right\} = -\frac{\bar{Y}_n-\mu}{\tau},
   \end{split}
\end{equation}
in which we also utilized the fact that $(-1-1/\xi)\log(1+\xi a)$ is an increasing function of $\xi$ in $\{\xi:1+a\xi>0, \;a\in (-\infty,0)\cup (2, \infty)\}$. }

\LZadd{Combining \eqref{eqn:jensen0} and \eqref{eqn:jensen2}, we have 
\begin{equation*}
    \begin{split}
        n^{-1}L_n(\btheta) &= -\log\tau+n^{-1}\left(-1-\frac{1}{\xi}\right)\sumN \log W_i(\btheta)-n^{-1}\sumN W_i^{-1/\xi}(\btheta)\\
        &=-\log\tau-1-\frac{(\bar{Y}_n-\mu)^2}{4\tau^2}<-\log\tau-1-\frac{\zeta^2}{4\tau^2}.
    \end{split}
\end{equation*}
The last inequality of the last display again exploits the fact that $r_2>\max\{\tau_0\gamma+\zeta, 2(\tau_0+1)\}$ and $\bar{Y}_n$ tends to $\mu_0+\tau_0\gamma$ almost surely. Consequently,
\begin{equation*}
    \begin{split}
        \int_{\Omega_n^3}  \pi(\btheta)\exp\{L_n(\btheta)\}d\btheta&<\int_{\Omega_n^3}  g(\xi)\tau^{-n-1}\exp\left\{-\frac{n\zeta^2}{4\tau^2}-n\right\}d\btheta\\
        &<\frac{\Pi_g}{2}\left(\frac{4}{n\zeta}\right)^{(n-1)/2}\Gamma\left(\frac{n}{2}\right)e^{-n},
    \end{split}
\end{equation*}
where $\Pi_g$ again denotes the upper bound of $g(\xi)$ in $\xi\in (-r_1, r_1)$. Similarly to the proof of Lemma \ref{lem:posterior_mass_zero_tau}, we can easily show that $$\int_{\Omega_n^3}\pi_n(\btheta) d\btheta = \int_{\Omega_n^3}  \pi(\btheta)\exp\{L_n(\btheta)\}d\btheta/C_n\rightarrow 0$$
as $n\rightarrow\infty$ almost surely on account of Lemma \ref{lem:lower_bound} and the fact that $\zeta> \tau_0\gamma+\gamma+1$.}

\end{proof}

\section{More simulation results}\label{appendix:simulation}
\begin{figure}
    \centering
    \includegraphics[width=1.1\linewidth]{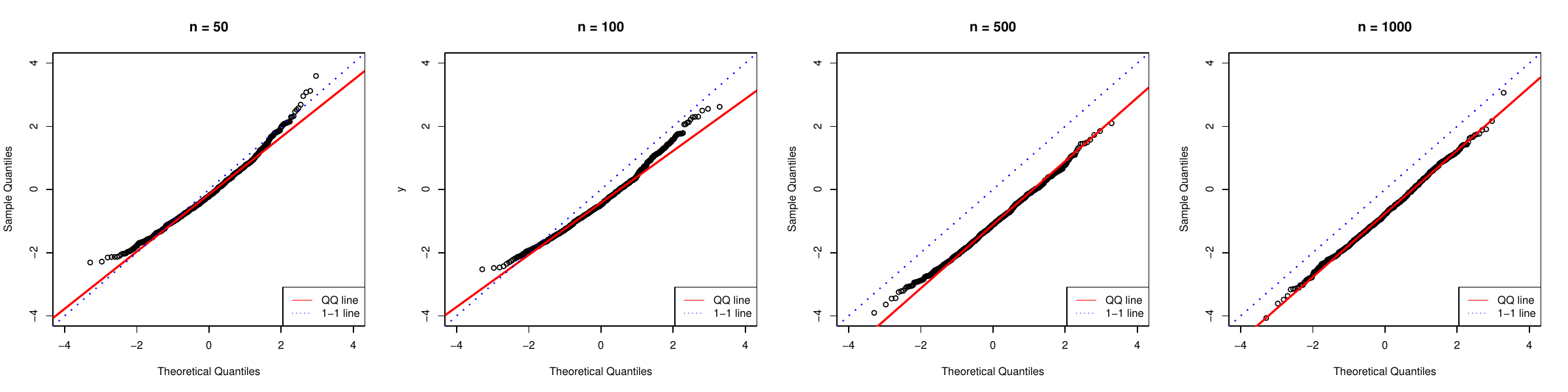}
    \includegraphics[width=1.1\linewidth]{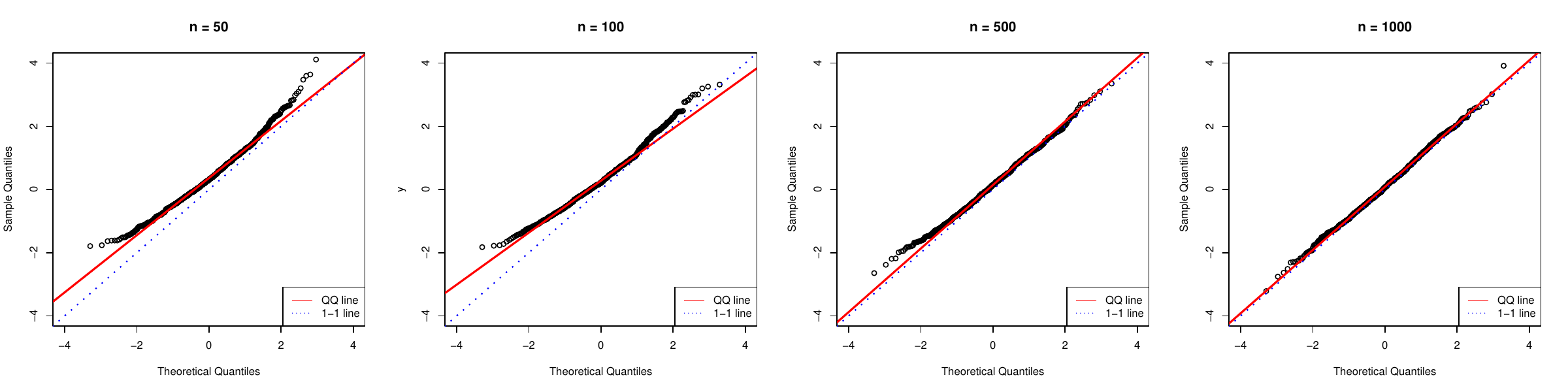}
    \caption{Under the settings of Figure \ref{fig:pos_den_est}, we complement Figure \ref{fig:qq-plots} by showing the QQ-plots for the posterior samples of $\tau$ centered by its asymptotic mean $\tau_0$ and standardized by $(nI_{11})^{-1/2}$ (top panels), and centered by the MLE $\hat{\tau}_n$ and standardized by $(n\hat{I}_{11})^{-1/2}$ (bottom panels). Here, $I_{11}^{-1/2}$ and $\hat{I}_{11}^{-1/2}$ denote the square roots of the first diagonal elements of $I^{-1}(\btheta_0)$ and $\hat{I}^{-1}(\hat{\btheta}_n)$ respectively.}
    \label{fig:qq-plots-tau}
\end{figure}

\begin{figure}
    \centering
    \includegraphics[width=1.1\linewidth]{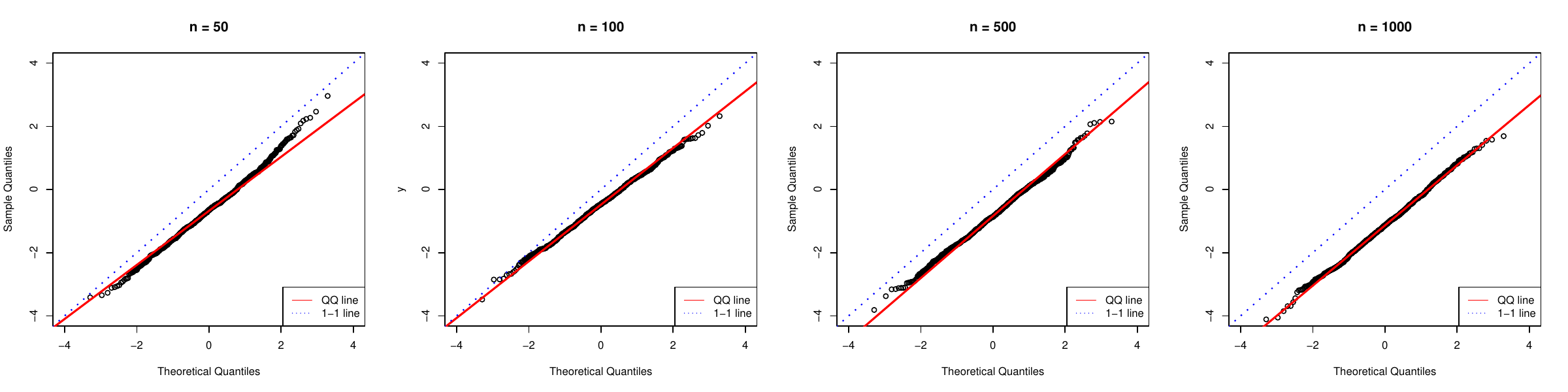}
     \includegraphics[width=1.1\linewidth]{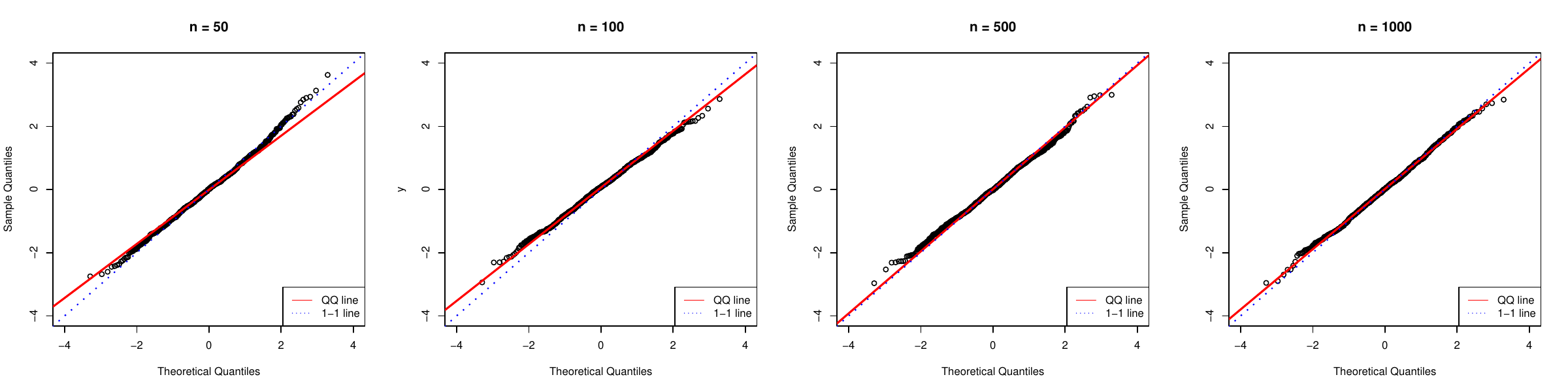}
    \caption{Under the settings of Figure \ref{fig:pos_den_est}, we complement Figure \ref{fig:qq-plots} by showing the QQ-plots for the posterior samples of $\mu$ centered by its asymptotic mean $\mu_0$ and standardized by $(nI_{22})^{-1/2}$ (top panels), and centered by the MLE $\hat{\mu}_n$ and standardized by $(n\hat{I}_{22})^{-1/2}$ (bottom panels). Here, $I_{22}^{-1/2}$ and $\hat{I}_{22}^{-1/2}$ denote the square roots of the second diagonal elements of $I^{-1}(\btheta_0)$ and $\hat{I}^{-1}(\hat{\btheta}_n)$ respectively.}
    \label{fig:qq-plots-mu}
\end{figure}

\begin{figure}
    \centering
    \includegraphics[width=\linewidth]{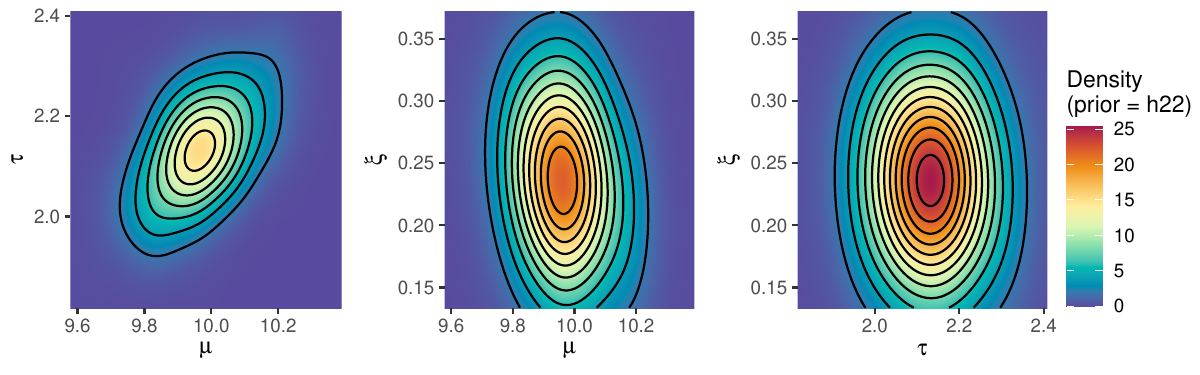}
    \includegraphics[width=\linewidth]{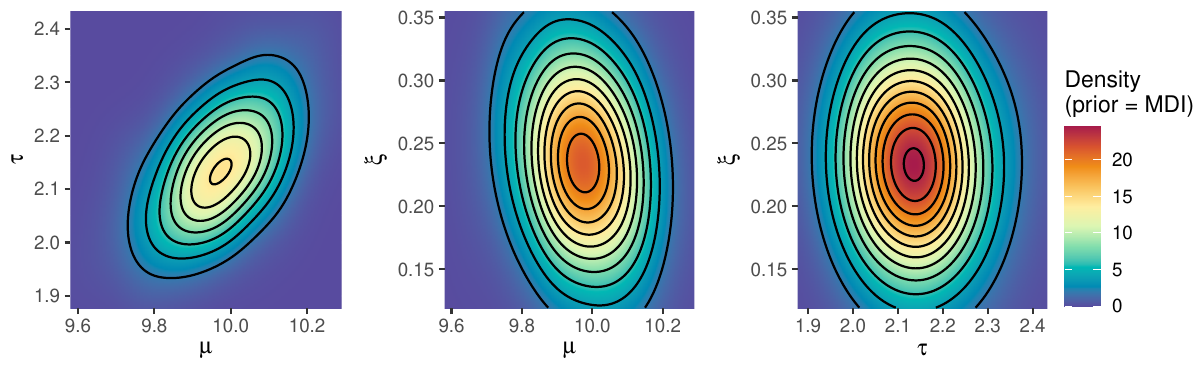}
    \includegraphics[width=\linewidth]{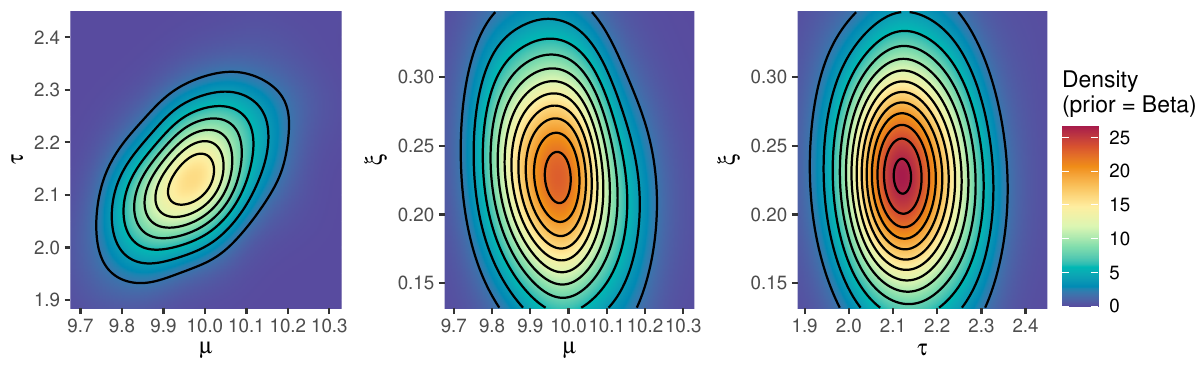}
    \caption{Complementary to Figure \ref{fig:pos_den_est}, 
    we show posterior density estimates based on i.i.d. samples from the GEV distribution with $(\tau_0,\mu_0,\xi_0) = (2, 10, 0.2)$ and sample size $n=500$, under four different priors:   from top to bottom, $\pi(\btheta) \propto \tau^{-1}h^{1/2}_{22}(\xi)$, $\pi(\btheta) \propto \tau^{-1}e^{-\gamma(1+\xi)-1}$ (MDI), and $\pi(\btheta) \propto \tau^{-1}p_{\text{Beta}(9,5)}(\xi+0.5)$ (Beta), in which $p_{\text{Beta}(9,5)}$ is the density of a Beta$(9,5)$ distribution. See  Proposition 2 in \citet{zhang2024reference} for the exact expression of $h_{22}(\xi)$.}
    \label{fig:pos_den_est_prior}
\end{figure}

\begin{figure}
    \centering
    \includegraphics[width=\linewidth]{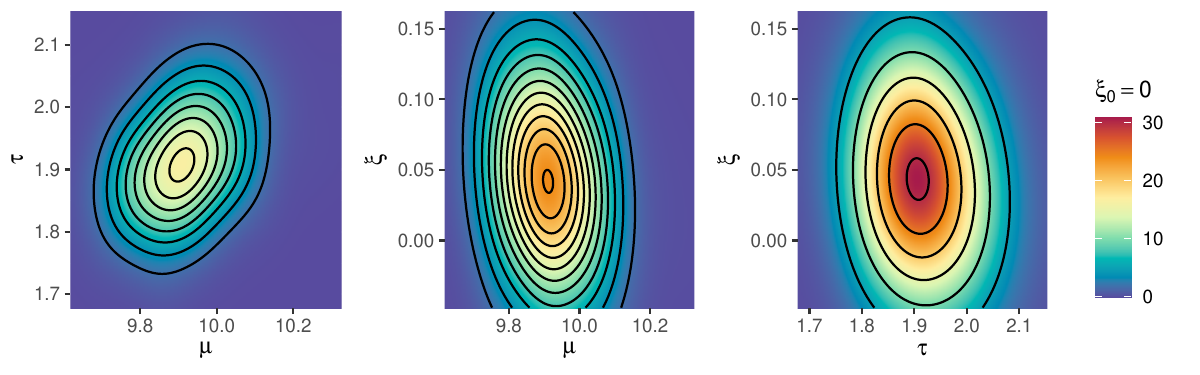}
    \includegraphics[width=\linewidth]{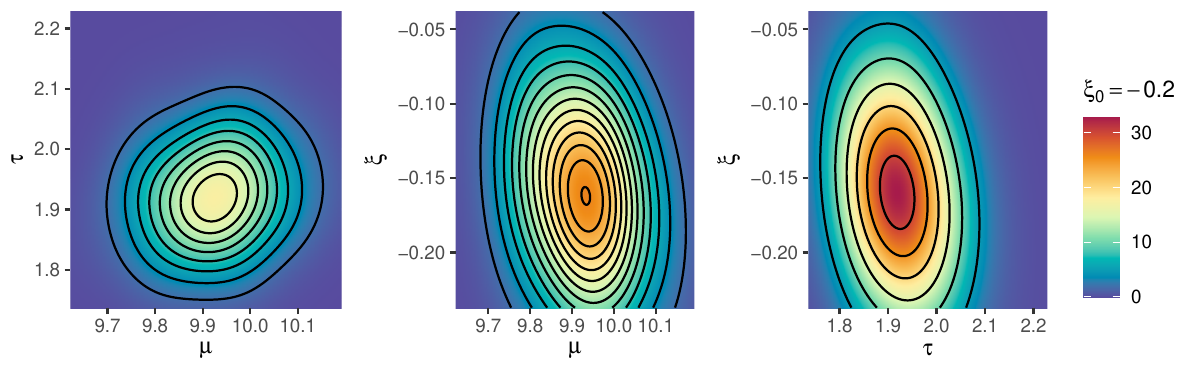}
    \caption{Similar to Figure \ref{fig:pos_den_est}, we show posterior density estimates based on i.i.d. samples from the GEV distribution with $(\tau_0,\mu_0,\xi_0) = (2, 10, 0)$ (top panels) and $(\tau_0,\mu_0,\xi_0) = (2, 10, -0.2)$ (bottom panels). Here, we only show results from $n=500$ and using prior $\pi(\btheta) \propto \tau^{-1}h_{11}(\xi)$.}
\label{fig:zero_neg}
\end{figure}

\begin{figure}
    \centering
    \includegraphics[width=0.825\linewidth]{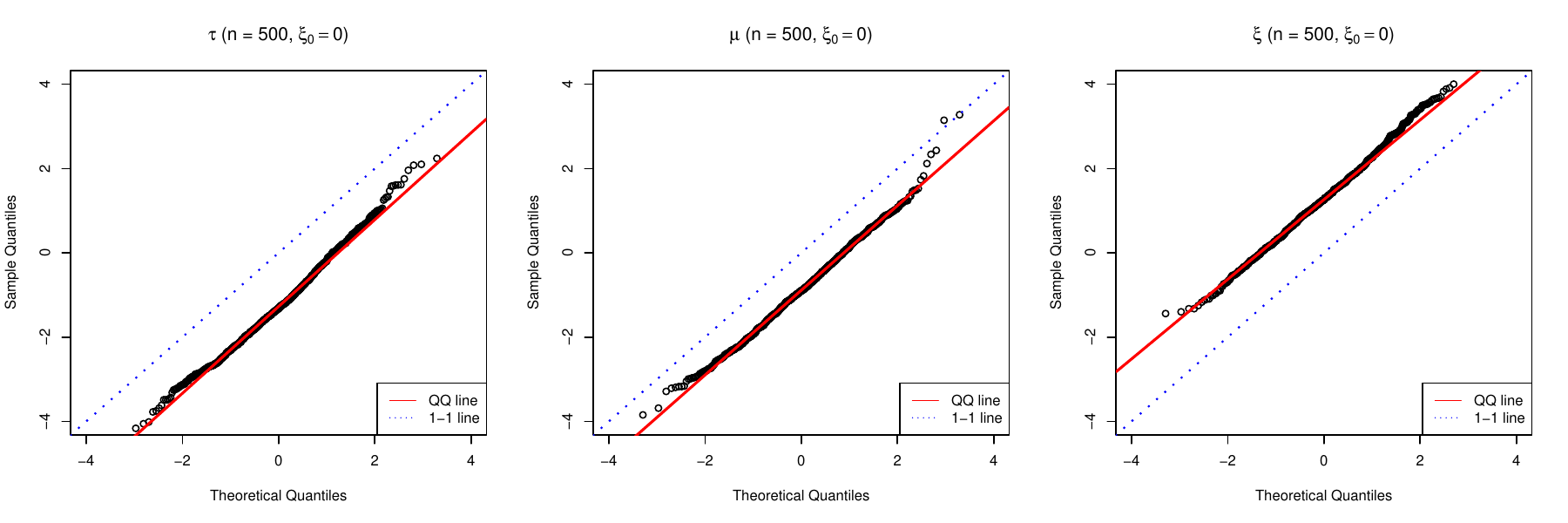}
    \includegraphics[width=0.825\linewidth]{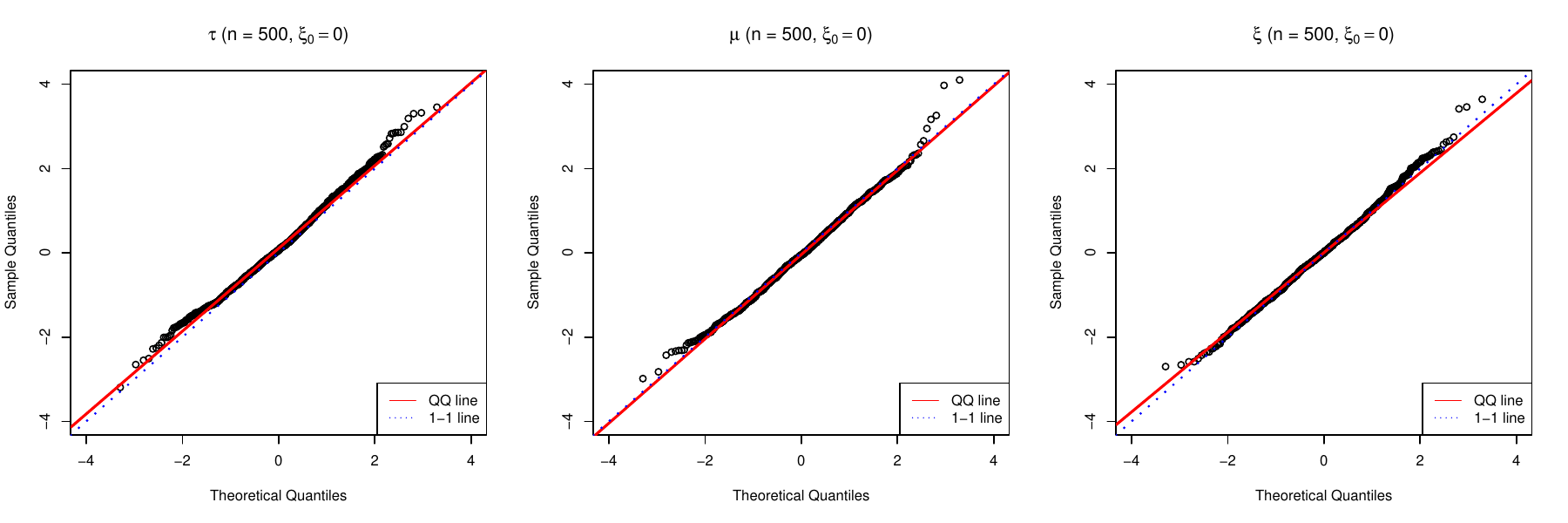}
    \caption{Similar to Figures \ref{fig:qq-plots} -- \ref{fig:qq-plots-mu}, we show   the QQ-plots for the posterior samples of all three parameters (from the 500 i.i.d. GEV samples with $(\tau_0,\mu_0,\xi_0) = (2, 10, 0)$) centered by their asymptotic means and standard deviations (top panels), and centered by the MLEs and standardized by empirical standard deviations (bottom panels).}
    \label{fig:qq_zero}
\end{figure}

\begin{figure}
    \centering
    \includegraphics[width=0.825\linewidth]{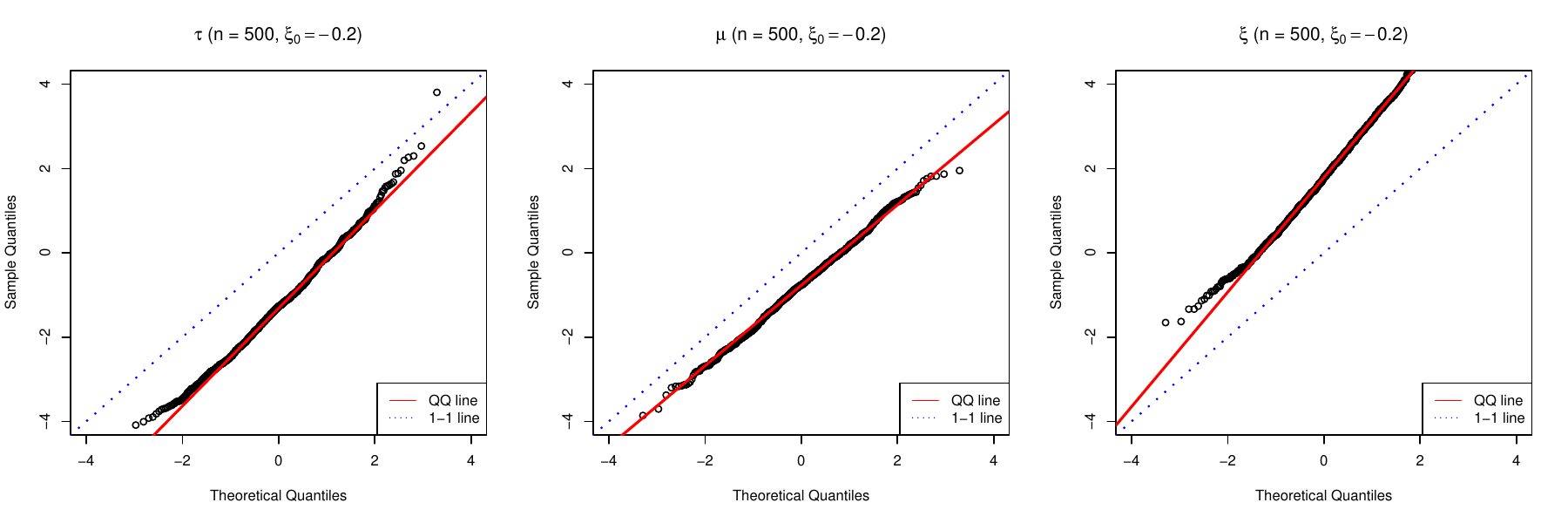}
    \includegraphics[width=0.825\linewidth]{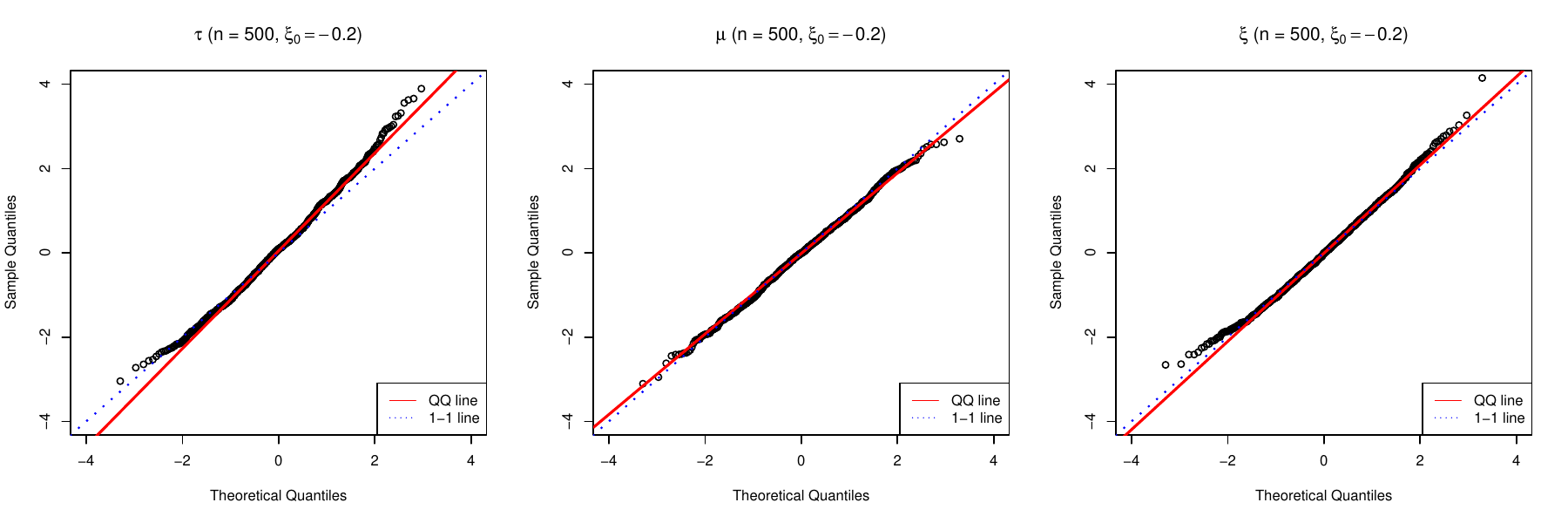}
    \caption{Similar to Figures \ref{fig:qq-plots} -- \ref{fig:qq-plots-mu}, we show   the QQ-plots for the posterior samples of all three parameters (from the 500 i.i.d. GEV samples with $(\tau_0,\mu_0,\xi_0) = (2, 10, -0.2)$) centered by their asymptotic means and standard deviations (top panels), and centered by the MLEs and standardized by empirical standard deviations (bottom panels).}
    \label{fig:qq_neg}
\end{figure}

\newpage

\bibliographystyle{unsrtnat}

\bibliography{main}

\end{document}